\newtheorem{claim}{\bf Claim}
\newtheorem{remark}[theorem]{\emph{Remark}}
\title{Global stabilization of a Korteweg-de Vries equation with saturating distributed control\thanks{This work has been partially supported by  
Fondecyt 1140741, MathAmsud COSIP, and Basal Project FB0008 AC3E.}} 
\author{
Swann Marx\footnotemark[2] \and
Eduardo Cerpa\footnotemark[3] \and
 Christophe Prieur\footnotemark[2]
 \and Vincent Andrieu\footnotemark[4]}
\def\downparenfill{$\m@th\braceld\leaders\vrule\hfill\bracerd$}
\def\overparen#1{\mathop{\vbox{\ialign{##\crcr\crcr
\noalign{\kern0.4ex}
\downparenfill\crcr\noalign{\kern0.4ex\nointerlineskip}
$\hfil\displaystyle{#1}\hfil$\crcr}}}\limits}
\def\sath{\mathfrak{sat}_2}
\def\satl{\mathfrak{sat}_{\texttt{loc}}}
\def\sat{\mathfrak{sat}}
\def\startmodif{\color{blue}}  
\def\stopmodif{\color{black}}
\begin{document}

\maketitle
\slugger{sicon}{xxxx}{xx}{x}{x--x}%slugger should be set to mms, siap, sicomp, sicon, sidma, sima, simax, sinum, siopt, sisc, or sirev

%\thispagestyle{empty}
%\pagestyle{empty}

%\author{Swann Marx\thanks{Swann Marx and Christophe Prieur are with Gipsa-lab, Department of Automatic Control, Grenoble Campus, 11 rue des Math\'ematiques, BP 46, 38402 Saint Martin d'H\`eres Cedex. {\tt\small E-mail: \{swann.marx,christophe.prieur\}@gipsa-lab.fr}.}\and Eduardo Cerpa\thanks{Eduardo Cerpa is with  Departamento de Matem\'atica, Universidad T\'ecnica Federico Santa Mar\'ia, Avda. Espa\~na 1680, Valpara\'iso, Chile. {\tt\small E-mail: eduardo.cerpa@usm.cl}. This author was partially supported by Fondecyt grant 1140741 and Basal Project FB0008 AC3E.} \and Christophe Prieur$^{\star}$\and Vincent Andrieu\thanks{Vincent Andrieu is with  Universit\'e Lyon 1 CNRS UMR 5007 LAGEP, France and Fachbereich C - Mathematik und Naturwissenschaften, Bergische Universit\"at Wuppertal, Gau\ss stra\ss e 20, 42097 Wuppertal, Germany. {\tt\small E-mail: vincent.andrieu@gmail.com}.}}
%

\renewcommand{\thefootnote}{\fnsymbol{footnote}}

\footnotetext[2]{
Gipsa-lab, Department of Automatic Control, Grenoble Campus, 11 rue des Math\'ematiques, BP 46, 38402 Saint Martin d'H\`eres Cedex, France.
 E-mail: {\tt swann.marx@gipsa-lab.fr, christophe.prieur@gipsa-lab.fr}
 }
\footnotetext[3]{
Departamento de Matem\'atica, Universidad T\'ecnica Federico Santa Mar\'ia, Avda. Espa\~na 1680, Valpara\'iso, Chile. E-mail: {\tt eduardo.cerpa@usm.cl}
}
\footnotetext[4]{
Universit\'e Lyon 1 CNRS UMR 5007 LAGEP, France and Fachbereich C - Mathematik und Naturwissenschaften, Bergische Universit\"at Wuppertal, Gau\ss stra\ss e 20, 42097 Wuppertal, Germany.
 E-mail: {\tt vincent.andrieu@gmail.com}
 }

 \renewcommand{\thefootnote}{\arabic{footnote}}

%%%%%%%%%%%%%%%%%%%%%%%%%%%%%%%%%%%%%%%%%%%%%%%%%%%%%%%%%%%%%%%%%%%%%%%%%%%%%%%%
\begin{abstract}
This article deals with the design of saturated controls in the context of partial differential equations. It focuses on a Korteweg-de Vries equation, which is a nonlinear mathematical model of waves on shallow water surfaces. Two different types of saturated controls are considered. The well-posedness is proven applying a Banach fixed point theorem, using some estimates of this equation and some properties of the saturation function. The proof of the asymptotic stability of the closed-loop system is separated in two cases: i) when the control acts on all the domain, a Lyapunov function together with a sector condition describing the saturating input is used to conclude on the stability; ii) when the control is localized, we argue by contradiction. Some numerical simulations illustrate the stability of the closed-loop nonlinear partial differential equation. 
\end{abstract}

%%%%%%%%%%%%%%%%%%%%%%%%%%%%%%%%%%%%%%%%%%%%%%%%%%%%%%%%%%%%%%%%%%%%%%%%%%%%%%%
\begin{keywords} Korteweg-de Vries equation, stabilization, distributed control, saturating control, nonlinear system\end{keywords}

\begin{AMS} 93C20, 93D15, 35Q53 \end{AMS}
\pagestyle{myheadings}
\thispagestyle{plain}
\markboth{S. MARX, E. CERPA, C. PRIEUR, V. ANDRIEU}{STABILIZATION OF A KORTEWEG-DE VRIES EQUATION WITH SATURATING CONTROL}

%\addtolength{\textheight}{-12cm}   % This command serves to balance the column lengths
                                  % on the last page of the document manually. It shortens
                                  % the textheight of the last page by a suitable amount.
                                  % This command does not take effect until the next page
                                  % so it should come on the page before the last. Make
                                  % sure that you do not shorten the textheight too much.
\section{Introduction}

In recent decades, a great effort has been made to take into account input saturations in control designs (see e.g \cite{tarbouriech2011book_saturating}, \cite{zacc2003antiwindupLMI} or more recently \cite{laporte2015bounded}). In most applications, actuators are limited due to some physical constraints and the control input has to be bounded. Neglecting the amplitude actuator limitation can be source of undesirable and catastrophic behaviors for the closed-loop system. The standard method to analyze the stability with such nonlinear controls follows a two steps design. First the design is carried out without taking into account the saturation. In a second step, a nonlinear analysis of the closed-loop system is made when adding the saturation. In this way, we often get local stabilization results. Tackling this particular nonlinearity in the case of finite dimensional systems is already a difficult problem. However, nowadays, numerous techniques are available (see e.g. \cite{tarbouriech2011book_saturating,teel1992globalsaturation,sussmann1991saturation}) and such systems can be analyzed with an appropriate Lyapunov function and a sector condition of the saturation map, as introduced in \cite{tarbouriech2011book_saturating}. 

In the \startmodif literature\stopmodif, there are few papers studying this topic in the infinite dimensional case. Among them, we can cite \cite{lasiecka2002saturation}, \cite{prieur2016wavecone}, where a wave equation equipped with a saturated distributed actuator is studied, and \cite{daafouz2014nonlinear}, where a coupled PDE/ODE system modeling a switched power converter with a transmission line is considered. Due to some restrictions on the system, a saturated feedback has to be designed in the latter paper. There exist also some papers using the nonlinear semigroup theory and focusing on abstract systems (\cite{Logemann98time-varyingand},\cite{seidman2001note},\cite{slemrod1989mcss}).

Let us note that in \cite{slemrod1989mcss}, \cite{seidman2001note} and \cite{Logemann98time-varyingand}, the study of a priori bounded controller is tackled using abstract nonlinear theory. To be more specific, for bounded (\cite{slemrod1989mcss},\cite{seidman2001note}) and unbounded (\cite{seidman2001note}) control operators, some conditions are derived to deduce, from the asymptotic stability of an infinite-dimensional linear system in abstract form, the asymptotic stability when closing the loop with saturating controller. These articles use the nonlinear semigroup theory (see e.g. \cite{miyadera1992nl_sg} or \cite{brezis2010functional}). 
 
The Korteweg-de Vries equation (KdV for short)
\begin{equation}
y_t+y_{x}+y_{xxx}+yy_x=0,
\end{equation}
is a mathematical model of waves on shallow water surfaces. Its controllability and stabilizability properties have been deeply studied with no constraints on the control, as reviewed in 
 \cite{cerpa2013control, bible_coron, rosier-zhang}. In this article, we focus on the following controlled  KdV equation
\begin{equation}
\label{nlkdv}
\left\{
\begin{array}{ll}
y_t+y_x+y_{xxx}+yy_x+f=0,& (t,x)\in [0,+\infty)\times [0,L],\\
y(t,0)=y(t,L)=y_x(t,L)=0,& t\in[0,+\infty),\\
%&y_x(t,L)=0,\: t\in[0,+\infty),\\
y(0,x)=y_0(x),& x\in [0,L],
\end{array}
\right.
\end{equation}
where $y$ stands for the state and $f$ for the control. As studied in \cite{rosier1997kdv}, if $f=0$ and
\begin{equation}
\label{critical-length}
L\in\left\{ 2\pi\sqrt{\frac{k^2+kl+l^2}{3}}\,\Big\slash \,k,l\in\mathbb{N}^*\right\},
\end{equation}
then, there exist solutions of the linearized version of \eqref{nlkdv}, written as follows,
\begin{equation}
\label{lkdv}
\left\{
\begin{array}{l}
y_t+y_x+y_{xxx}=0,\\
y(t,0)=y(t,L)=y_x(t,L)=0,\\
y(0,x)=y_0(x),
\end{array}
\right.
\end{equation}
for which the $L^2(0,L)$-energy does not decay to zero. For instance, if $L=2\pi$ and $y_0=1-\cos(x)$ for all $x\in [0,L]$, then $y(t,x)=1-\cos(x)$ is a stationary solution of \eqref{lkdv} conserving the energy for any time $t$. Note however that, if $L=2\pi$ and $f=0$, the origin of \eqref{nlkdv} is locally asymptotically stable as stated in \cite{chu2013asymptotic}. \startmodif It is worth to mention that there is no hope to obtain global stability, as established in \cite{doronin-natali} where an equilibrium with arbitrary large amplitude is built.\stopmodif

In the literature there are some methods stabilizing the KdV equation \eqref{nlkdv} with boundary  \cite{cerpa2009rapid, cerpa_coron_backstepping, marx-cerpa} or distributed controls \cite{perla-vasconcellos-zuazua, pazoto2005localizeddamping}. Here we focus on the distributed control case. In fact, as proven in \cite{perla-vasconcellos-zuazua, pazoto2005localizeddamping}, the feedback control $f(t,x)=a(x)y(t,x)$, where $a$ is a positive function whose support is a nonempty open subset of $(0,L)$, makes the origin an exponentially stable solution. 
%However note that the only nonlinearity in these papers is the saturation that is a globally Lipschitz and an odd operator. The aim of the present paper is to study a nonlinear partial differential equation with a saturated feedback law. 

In \cite{mcpa2015kdv_saturating}, in which it is considered a linear Korteweg-de Vries equation with a saturated distributed control, we use a nonlinear semigroup theory. In the case of the present paper, since the term $yy_x$ is not globally Lipschitz, such a theory is harder to use. Thus, we aim here at studying a particular nonlinear partial differential equation without seeing it as an abstract control system and without using the nonlinear semigroup theory. In this paper, we introduce two different types of saturation borrowed from \cite{prieur2016wavecone,mcpa2015kdv_saturating} and \cite{slemrod1989mcss}. In finite dimension, a way to describe this constraint is to use the classical saturation function (see \cite{tarbouriech2011book_saturating} for a good introduction on saturated control problems) defined by
%, for all $s\in\mathbb{R}$,
\begin{equation}
\texttt{sat}(s)=\left\{
\begin{array}{rl}
-u_0 &\text{ if } s\leq -u_0,\\
s &\text{ if } -u_0\leq s\leq u_{0},\\
u_0 &\text{ if } s\geq u_{0},
\end{array}
\right.
\end{equation}
for some $u_0>0$.
As in \cite{prieur2016wavecone} and \cite{mcpa2015kdv_saturating} we use its extension to infinite dimension for the following feedback law
\begin{equation}
f(t,x)=\satl(ay)(t,x),
\end{equation}
where, for all sufficiently smooth function $s$ and for all $x\in [0,L]$, $\satl$ is defined as follows
\begin{equation}
\label{sat-linf}
\satl(s)(x)=\texttt{sat}(s(x)).
\end{equation}
Such a saturation is called localized since its image depends only on the value of $s$ at $x$.

\startmodif In this work, we also use a saturation operator in $L^2(0,L)$, denoted by $\sath$, and defined by \stopmodif
\begin{equation}
\label{function-saturation}
\sath(s)(x)=\left\{
\begin{array}{rl}
&\hspace{-1.3cm}s(x)\hspace{0.65cm}\text{ if }\Vert s\Vert_{L^2(0,L)}\leq u_{0},\\
\frac{s(x)u_0}{\Vert s\Vert_{L^2(0,L)}}&\text{ if } \Vert s\Vert_{L^2(0,L)}\geq u_{0}.
\end{array}
\right.
\end{equation}
Note that this definition is borrowed from \cite{slemrod1989mcss} (see also \cite{seidman2001note} or \cite{lasiecka2002saturation}) where the saturation is obtained from the norm of the Hilbert space of the control operator. This saturation seems more natural when studying the stability with respect to an energy, but it is less relevant than $\satl$ for applications. Figure \ref{diff-sat} illustrates how different these saturations are.

\begin{figure}[ht]
\centering
  \includegraphics[scale=0.6]{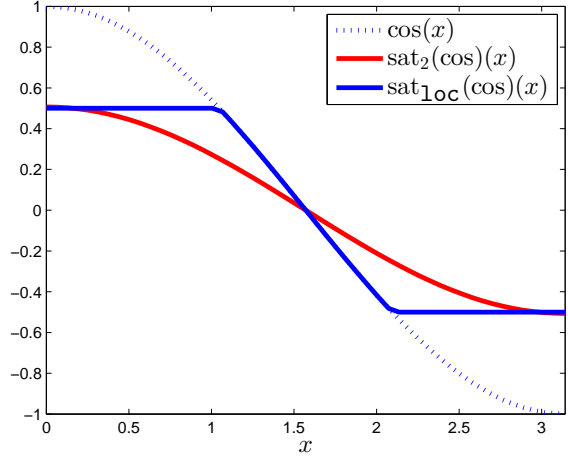}
\caption{$x\in [0,\pi]$. Red: $\sath(\cos)(x)$ and $u_0=0.5$, Blue: $\satl(\cos)(x)$ and $u_0=0.5$, Dotted lines: $\cos(x)$.}
\label{diff-sat}
\end{figure}

Our first main result states that using either the localized saturation \eqref{sat-linf} or using the $L^2$ saturation map \eqref{function-saturation} the KdV equation \eqref{nlkdv} in closed loop with a saturated control is well-posed (see Theorem \ref{nl-theorem-wp} below for a precise statement). Our second main result states that the origin of the KdV equation \eqref{nlkdv} in closed loop with a saturated control is globally asymptotically stable. Moreover, in the case where the control acts on all the domain and where the control is saturated with \eqref{function-saturation}, if the initial conditions are bounded in $L^2$ norm, then the solution converges exponentially with a decay rate that can be estimated (see Theorem \ref{glob_as_stab} below for a precise statement).    

This article is organized as follows. In Section \ref{sec_mainresults}, we present our main results about the well posedness and the stability of \eqref{nlkdv} in presence of saturating control. Sections \ref{sec_wp} and \ref{sec_stab} are devoted to prove these results by using the Banach fixed-point theorem, Lyapunov techniques and a contradiction argument. In Section \ref{sec_simu}, we provide a numerical scheme for the nonlinear equation and give some simulations of the equation looped by a saturated feedback. Section \ref{sec_conc} collects some concluding remarks and possible further research lines. 

%Appendix \ref{appendix} shows how to estimate the decay rate of the solution. 

\textbf{Notation:} %$y_t$ (resp. $y_x$) stands for the partial derivative of the function $y$ with respect to $t$ (resp. $x$) (this is a shortcut for $\frac{\partial y}{\partial t}$, resp. $\frac{\partial y}{\partial x}$). Given $L>0$, $\Vert \cdot\Vert_{L^2(0,L)}$ denotes the norm in $L^2(0,L)$ and  $H^3(0,L)$ is the set of all functions $u\in L^2(0,L)$ such that $u_x, u_{xx},u_{xxx}\in L^2(0,L)$. Finally $H_0^1(0,L)$ is the closure in $L^2(0,L)$ of the set of smooth functions that are vanishing at $x=0$ and $x=L$. It is equipped with the norm $\Vert u\Vert^2_{H_0^1(0,L)}:=\int_0^L |u_x|^2dx$. 
%The inner products in $L^2(0,L)$ and $H^1(0,L)$ are denoted $\langle \cdot,\cdot\rangle_{L^2(0,L)}$ and $\langle \cdot,\cdot\rangle_{H^1(0,L)}$, respectively.
\startmodif A function $\alpha$ is said to be a class $\mathcal{K}_\infty$ function if $\alpha$ is nonnegative, increasing, vanishing at $0$ and such that $\lim_{s\rightarrow +\infty}\alpha(s)=+\infty$.\stopmodif

\section{Main results}

\label{sec_mainresults}

We first give an analysis of our system \eqref{nlkdv} when there is no constraint on the control $f$. To do that, letting $f(t,x):=ay(t,x)$ in \eqref{nlkdv}, where $a$ is a nonnegative function satisfying
\begin{equation}
\label{gain-control}
\left\{
\begin{array}{l}
0<a_0 \leq a(x)\leq a_1,\quad \forall x\in\omega,\\
\text{where $\omega$ is a nonempty open subset of $(0,L)$},
\end{array}
\right.
\end{equation}
then, following \cite{rosier2006global}, we get that the origin of \eqref{nlkdv} is globally asymptotically stabilized. If $\omega=[0,L]$, then any solution to \eqref{nlkdv} satisfies
\begin{equation}
\label{lyap-without-sat}
%\begin{split}
\frac{1}{2}\frac{d}{dt}\int_0^L |y(t,x)|^2 dx=-\frac{1}{2}|y_x(t,0)|^2-\int_0^L a(x) |y(t,x)|^2 dx
\leq  -a_0\int_0^L |y(t,x)|^2 dx,
%\end{split}
\end{equation} 
which ensures an exponential stability with respect to the $L^2(0,L)$-norm. Note that the decay rate can be selected as large as we want by tuning the parameter $a_0$. Such a result is refered to as a rapid stabilization result.

Let us consider the KdV equation controlled by a saturated distributed control as follows
\begin{equation}
\label{nlkdv_sat}
\left\{
\begin{array}{l}
y_t+y_x+y_{xxx}+yy_x+\sat(ay)=0,\\
y(t,0)=y(t,L)=y_x(t,L)=0,\\
y(0,x)=y_0(x),
\end{array}
\right.
\end{equation}
where $\sat=\sath$ or $\satl$. Since these two operators have properties in common, we will use the notation $\sat$ all along the paper. However, in some cases, we get different results. Therefore, the use of a particular saturation is specified when it is necessary. 

Let us state the main results of this paper.

\begin{theorem}\em (Well posedness) \em
\label{nl-theorem-wp}
For any initial condition $y_0\in L^2(0,L)$, there exists a unique mild solution $y\in C([0,T];L^2(0,L))\cap L^2(0,T;H^1(0,L))$ to \eqref{nlkdv_sat}. 
\end{theorem}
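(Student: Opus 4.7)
The plan is to set up a Banach fixed-point argument in the natural energy space
\[
\mathcal{B}_T := C([0,T];L^2(0,L))\cap L^2(0,T;H^1(0,L)),
\]
endowed with the norm $\|y\|_{\mathcal{B}_T}=\|y\|_{C([0,T];L^2)}+\|y\|_{L^2(0,T;H^1)}$. Define the map $\Psi : \mathcal{B}_T \to \mathcal{B}_T$ by $\Psi(y)=\tilde{y}$, where $\tilde{y}$ is the solution of the linear inhomogeneous KdV system
\[
\tilde{y}_t+\tilde{y}_x+\tilde{y}_{xxx}=-yy_x-\sat(ay), \qquad \tilde{y}(\cdot,0)=\tilde{y}(\cdot,L)=\tilde{y}_x(\cdot,L)=0, \qquad \tilde{y}(0,\cdot)=y_0.
\]
A fixed point of $\Psi$ is exactly a mild solution of \eqref{nlkdv_sat}. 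For the linear problem the classical Rosier-type estimate (combining the $C([0,T];L^2)$ energy bound with the hidden Kato smoothing giving the $L^2(0,T;H^1)$ regularity) yields
\[
\|\tilde{y}\|_{\mathcal{B}_T} \leq C\bigl(\|y_0\|_{L^2} + \|yy_x+\sat(ay)\|_{L^1(0,T;L^2)}\bigr),
\]
where $C$ is independent of $T$ on bounded time intervals.

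The next step is to control the two source terms. For the saturation, both $\satl$ and $\sath$ are globally bounded and globally Lipschitz from $L^2(0,L)$ into itself: in particular $\|\sat(ay)\|_{L^2}\leq \min\{u_0\sqrt{L},\,a_1\|y\|_{L^2}\}$ (for $\satl$) or $\|\sath(ay)\|_{L^2}\leq u_0$, and $\|\sat(ay)-\sat(az)\|_{L^2}\leq a_1\|y-z\|_{L^2}$; hence this term contributes $O(T)$ to $\|\cdot\|_{L^1(0,T;L^2)}$ on any bounded ball. For the Burgers term I would invoke the standard bilinear estimate for KdV,
\[
\|yy_x\|_{L^1(0,T;L^2)} \leq C\,(T^{1/2}+T^{1/3})\,\|y\|_{\mathcal{B}_T}^{\,2},
\qquad
\|yy_x-zz_x\|_{L^1(0,T;L^2)} \leq C\,(T^{1/2}+T^{1/3})\,(\|y\|_{\mathcal{B}_T}+\|z\|_{\mathcal{B}_T})\,\|y-z\|_{\mathcal{B}_T},
\]
obtained by writing $yy_x=\tfrac12(y^2)_x$ and interpolating between $L^\infty_tL^2_x$ and $L^2_tH^1_x$ (as in Rosier's original construction). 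Combining these, one shows that for $R=2C\|y_0\|_{L^2}$ and $T>0$ small enough (depending only on $\|y_0\|_{L^2}$), $\Psi$ sends the closed ball of radius $R$ in $\mathcal{B}_T$ into itself and is a strict contraction there. Banach's theorem then provides a unique local mild solution.

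To extend the solution globally in time, I would establish the a priori estimate obtained by formally multiplying \eqref{nlkdv_sat} by $y$ and integrating: the trilinear term $yy_x$ integrates to a boundary term with favourable sign, the dispersive term produces $-\tfrac12 |y_x(t,0)|^2$, and the saturation contribution is controlled because $\sat$ is globally bounded in $L^2$, giving
\[
\tfrac{1}{2}\tfrac{d}{dt}\|y(t)\|_{L^2}^{\,2} + \tfrac{1}{2}|y_x(t,0)|^{2} \leq \bigl|\langle \sat(ay),y\rangle_{L^2}\bigr| \leq c_1\|y\|_{L^2},
\]
from which a linear-in-$t$ bound on $\|y(t)\|_{L^2}$ follows by Gronwall. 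Since the local existence time depends only on $\|y_0\|_{L^2}$, this bound prevents blow-up, and one reapplies the fixed-point procedure on successive intervals to reach any prescribed $T>0$. Uniqueness on $[0,T]$ is then immediate from the contraction estimate applied on a partition with sufficiently small mesh.

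The main obstacle I anticipate is the treatment of $yy_x$: this term is not Lipschitz in $L^2$ and can only be absorbed thanks to the Kato smoothing effect yielding $y\in L^2(0,T;H^1)$, so the fixed-point space must be chosen carefully and the bilinear KdV estimate must be invoked with the correct interpolation exponents. By contrast the saturation term, being globally bounded and $L^2$-Lipschitz with constant at most $a_1$ for either $\satl$ or $\sath$, integrates harmlessly and does not affect either the contraction step or the a priori energy estimate that yields global existence.
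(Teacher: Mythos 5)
Your proposal is correct and follows essentially the same route as the paper: a Banach fixed point in $C([0,T];L^2(0,L))\cap L^2(0,T;H^1(0,L))$ combining the linear Kato-smoothing estimates, a bilinear estimate for $yy_x$, and the global Lipschitz property of the saturation, followed by an $L^2$ energy estimate that rules out blow-up and allows iteration of the local argument. The only (harmless) deviations are that the paper exploits the sign condition $y\,\sat(ay)\ge 0$ to get the sharper bound $\Vert y(t,\cdot)\Vert_{L^2(0,L)}\le\Vert y_0\Vert_{L^2(0,L)}$ instead of your linear-in-$t$ Gronwall bound, proves uniqueness among all mild solutions via a separate multiplier/Gronwall lemma rather than by iterating the contraction on a fine partition, and uses the Lipschitz constant $3$ for $\sath$ (from Slemrod), so your constant $a_1$ should read $3a_1$ in that case.
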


\begin{theorem} \em (Global asymptotic stability) \em
\label{glob_as_stab}
Given a nonempty open subset $\omega$ and the positive values $a_0$ and $u_0$, there exist a positive value $\mu^{\star}$ and a \startmodif class $\mathcal{K}_\infty$ function $\alpha_0:\mathbb{R}_{\geq 0}\rightarrow \mathbb{R}_{\geq 0}$ \stopmodif such that for any $y_0\in L^2(0,L)$, the mild solution $y$ of (\ref{nlkdv_sat}) satisfies
\begin{equation}
\Vert y(t,.)\Vert_{L^2(0,L)}\leq \alpha_0(\Vert y_0\Vert_{L^2(0,L)})e^{-\mu^{\star}t},\quad \forall t\geq 0.
\end{equation}

Moreover, in the case where $\omega=[0,L]$ and $\sat=\sath$ we can estimate \em locally \em the decay rate of the solution. In other words, for all $r>0$, for any initial condition $y_0\in L^2(0,L)$ such that $\Vert y_0\Vert_{L^2(0,L)}\leq r$, the mild solution $y$ to \eqref{nlkdv_sat} satisfies
\begin{equation}
\label{local_as_stab_estim}
\Vert y(t,.)\Vert_{L^2(0,L)}\leq \Vert y_0\Vert_{L^2(0,L)}e^{-\mu t},\quad \forall t\geq 0,
\end{equation}
where $\mu$ is defined as follows
\begin{equation}
\label{formula-mu}
\mu:=\min\left\{a_0,\frac{u_0a_0}{ra_1}\right\}.
\end{equation} 
\end{theorem}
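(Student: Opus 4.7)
The proof decomposes along the two regimes stated in the theorem. For the explicit local decay with $\omega=[0,L]$ and $\sat=\sath$, I multiply \eqref{nlkdv_sat} by $y$, integrate in $x$, and invoke the boundary conditions $y(t,0)=y(t,L)=y_x(t,L)=0$; the linear transport and dispersion terms together with the cubic term $\int_0^L y^2 y_x\,dx=\tfrac{1}{3}[y^3]_0^L=0$ are handled exactly as in the unconstrained computation \eqref{lyap-without-sat}, giving
\begin{equation}
\frac{1}{2}\frac{d}{dt}\|y(t,\cdot)\|_{L^2}^2 \;=\; -\frac{1}{2}|y_x(t,0)|^2 \;-\;\int_0^L y(t,x)\,\sath(ay)(t,x)\,dx.
\end{equation}
The key lower bound on the damping integral follows from the two cases in the definition \eqref{function-saturation}. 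If $\|ay\|_{L^2}\leq u_0$, then $\sath(ay)=ay$ and the integral is at least $a_0\|y\|_{L^2}^2$. If $\|ay\|_{L^2}\geq u_0$, then $\sath(ay)=u_0 ay/\|ay\|_{L^2}$, and using $\|ay\|_{L^2}\leq a_1\|y\|_{L^2}$ the integral exceeds $(u_0 a_0/a_1)\|y\|_{L^2}$; because the energy is non-increasing, $\|y(t,\cdot)\|_{L^2}\leq r$, so this lower bound becomes $(u_0 a_0/(r a_1))\|y\|_{L^2}^2$. Combining both cases gives $\tfrac{d}{dt}\|y\|_{L^2}^2\leq -2\mu\|y\|_{L^2}^2$ with $\mu$ as in \eqref{formula-mu}, and Grönwall yields \eqref{local_as_stab_estim}.

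\textbf{Semi-global bound.} In the general localized case the previous Lyapunov calculation only produces a damping of the form $\int_\omega y\,\sat(ay)\,dx$, which controls the $L^2$-norm only over $\omega$; recovering the full $L^2$-norm requires an observability-type inequality for the damped nonlinear KdV. I would establish, for each level $R>0$ and some $T>0$, a contraction estimate $\|y(T,\cdot)\|_{L^2}^2\leq \kappa(R)\|y_0\|_{L^2}^2$ with $\kappa(R)<1$ valid for all $\|y_0\|_{L^2}\leq R$, arguing by contradiction: a sequence $y^n$ of mild solutions violating such an inequality is extracted, the Kato smoothing estimate from Theorem \ref{nl-theorem-wp} gives weak compactness of $y^n$ and $y^n_x$ on $(0,T)\times(0,L)$, a weak limit is passed in the equation, and a unique continuation argument for KdV of the type used in \cite{perla-vasconcellos-zuazua,pazoto2005localizeddamping,rosier2006global} forces this limit to vanish, contradicting the normalization. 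Iterating the contraction on successive intervals of length $T$ yields exponential decay at a rate $\mu^\star(R)$ that may depend on $R$. The uniform rate $\mu^\star$ and the $\mathcal{K}_\infty$ overshoot $\alpha_0$ in the statement are then obtained by fixing a reference level $r_0$, using the monotonicity of the energy so that every trajectory enters $B(0,r_0)$ in some finite time $T(R)$, setting $\mu^\star:=\mu^\star(r_0)$ and absorbing the transient into $\alpha_0(R):=R\,e^{\mu^\star T(R)}$.

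\textbf{Main obstacle.} The delicate step is the contradiction argument in the localized case: the nonlinearity $yy_x$ is not weakly continuous in $L^2$, so passing to a weak limit in $y^n y^n_x$ requires the Kato-type local smoothing of KdV (giving $y^n_x$ bounded in $L^2_{\mathrm{loc}}$) combined with strong compactness of $y^n$ in a suitable intermediate space in order to identify the limit of the bilinear term. Combining this compactness analysis with the only-Lipschitz saturation operator $\sat$, and then closing the loop via a unique continuation result valid for the limiting KdV system, is where the bulk of the work lies.
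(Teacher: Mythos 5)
Your plan follows the paper's proof essentially step for step: the explicit rate for $\omega=[0,L]$, $\sat=\sath$ comes from the same energy identity plus the two-case lower bound on $\int_0^L y\,\sath(ay)\,dx$ (which the paper packages as the sector condition of Lemma \ref{sat-l2-local}), the localized case is handled by the same observability-by-contradiction scheme with Aubin--Lions compactness and the Saut--Scheurer unique continuation theorem, and the passage from semi-global to global is the same finite-entry-time patching into a reference ball. The only point your sketch glosses over is that for $\sat=\satl$ the sector condition is pointwise and requires $|y(t,x)|\leq r$ rather than an $L^2$ bound, so the paper must first decompose $[0,T]$ into the sets $\Omega_i$ where $\sup_{x}|y^n(t,x)|>i$ and their complements before concluding that the limit vanishes on $\omega$; this is a technical elaboration of your argument rather than a change of method.
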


The remaining part of this paper is devoted to the proof of these results (see Sections \ref{sec_wp} and \ref{sec_stab}, respectively) and to numerical simulations to illustrate Theorem \ref{glob_as_stab} (see Section \ref{sec_simu}). 

%\begin{remark}
%The existence of $\mu$ is stated in this last theorem, but Appendix \ref{appendix} gives a precise way to compute it. Moreover, a phenomenon similar to the overdamping \cite{cerpa2010wave} appears estimating the decay rate if we use the method introduced in Appendix \ref{appendix}.   
%\end{remark}

\section{Well-posedness}

\label{sec_wp}

\subsection{Linear system}

\label{linear_wp}

Before proving the well-posedness of \eqref{nlkdv_sat}, let us recall some useful results on the linear system \eqref{lkdv}. To do that, consider the operator defined by
$$
D(A)=\lbrace w\in H^3(0,L),\: w(0)=w(L)=w^\prime(L)=0\rbrace,
$$
$$
A:w\in D(A)\subset L^2(0,L)\longmapsto (-w^\prime-w^{\prime\prime\prime})\in L^2(0,L).
$$
It can be proved that this operator and its adjoint operator defined by
$$
D(A^\star)=\lbrace w\in H^3(0,L),\: w(0)=w(L)=w^\prime(0)=0\rbrace,
$$
$$
A^\star:w\in D(A^\star)\subset L^2(0,L)\longmapsto w^\prime+w^{\prime\prime\prime},
$$
are both dissipative, which means that, for all $w\in D(A)$, $\int_0^L wA(w)dx\leq 0$ and for all $w\in D(A^\star)$, $\int_0^L wA^\star(w)dx\leq 0$.

Therefore, from \cite{pazy1983semigroups}, the operator $A$ generates a strongly continuous semigroup of contractions which we denote by $W(t)$. We have the following theorem proven in \cite{rosier1997kdv} and \cite{cerpa2013control}
\begin{theorem}[Well-posedness of \eqref{lkdv}, \cite{rosier1997kdv},\cite{cerpa2013control}]
\label{lkdv-wp}
\begin{itemize}
\item For any initial condition $y_0\in D(A)$, there exists a unique strong solution $y\in C(0,T;D(A))\cap C^1(0,T;L^2(0,L))$ to (\ref{lkdv}).
\item For any initial condition $y_0\in L^2(0,L)$, there exists a unique mild solution $y\in C([0,T];L^2(0,L))\cap L^2(0,T;H^1(0,L))$ to (\ref{lkdv}). Moreover, there exists $C_0>0$ such that the solution to (\ref{lkdv}) satisfies
\begin{equation}
\label{dissipativity-regularity}
\Vert y\Vert_{C(0,T;L^2(0,L))}+\Vert y\Vert_{L^2(0,T;H^1(0,L))}\leq C_0\Vert y_0\Vert_{L^2(0,L)}
\end{equation}
and the extra trace regularity
\begin{equation}
\label{extra-regularity}
\Vert y_x(.,0)\Vert_{L^2(0,T)}\leq \Vert y_0\Vert_{L^2(0,L)}.
\end{equation}
\end{itemize}
\end{theorem}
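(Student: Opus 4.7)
The plan is to build the proof in three stages, using abstract semigroup theory for the first bullet and multiplier estimates to get the additional regularity claims needed in the second bullet.

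First, I would invoke abstract semigroup theory to obtain strong solutions for smooth data. Since $A$ and $A^\star$ have already been verified to be densely defined and dissipative on $L^2(0,L)$, the Lumer--Phillips theorem (as stated in Pazy) guarantees that $A$ generates a $C_0$-semigroup of contractions $\{W(t)\}_{t\geq 0}$. Then, for any $y_0\in D(A)$, the function $y(t)=W(t)y_0$ automatically belongs to $C([0,T];D(A))\cap C^1([0,T];L^2(0,L))$ and classically solves \eqref{lkdv}, which settles the first bullet.

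Second, I would establish the two a priori estimates \eqref{dissipativity-regularity}--\eqref{extra-regularity} on strong solutions using two standard multipliers. Multiplying \eqref{lkdv} by $y$ and integrating by parts on $(0,L)$, the boundary conditions $y(t,0)=y(t,L)=y_x(t,L)=0$ eliminate all traces except $y_x(t,0)$, yielding
\begin{equation*}
\tfrac{1}{2}\tfrac{d}{dt}\|y(t,\cdot)\|_{L^2(0,L)}^2 + \tfrac{1}{2}|y_x(t,0)|^2 = 0,
\end{equation*}
which upon integration in time gives both the $L^2$ contraction and the hidden trace estimate \eqref{extra-regularity}. Next, multiplying by $xy$ and integrating by parts carefully I would obtain an identity of the form
\begin{equation*}
\tfrac{1}{2}\tfrac{d}{dt}\int_0^L x|y(t,x)|^2\,dx + \tfrac{3}{2}\int_0^L |y_x(t,x)|^2\,dx = \tfrac{1}{2}\int_0^L |y(t,x)|^2\,dx.
\end{equation*}
Integrating in $t$ and using the already-proved $L^2$ bound on the right-hand side produces $\|y\|_{L^2(0,T;H^1(0,L))} \leq C_0\|y_0\|_{L^2(0,L)}$, which combined with the energy identity yields \eqref{dissipativity-regularity}.

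Third, I would pass from strong to mild solutions by density. Given $y_0\in L^2(0,L)$, take a sequence $y_0^n\in D(A)$ with $y_0^n\to y_0$ in $L^2(0,L)$. By linearity, the differences $y^n-y^m$ are strong solutions with initial datum $y_0^n-y_0^m$, so the previous estimates applied to them show that $\{y^n\}$ is Cauchy in $C([0,T];L^2(0,L))\cap L^2(0,T;H^1(0,L))$ and that $\{y_x^n(\cdot,0)\}$ is Cauchy in $L^2(0,T)$. The common limit is the desired mild solution, uniqueness follows from the same energy estimate applied to the difference of two solutions, and the bounds \eqref{dissipativity-regularity}--\eqref{extra-regularity} pass to the limit.

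The main obstacle is the hidden $H^1$-regularity from the $xy$ multiplier: it is not a consequence of the abstract contraction-semigroup framework but reflects a genuine dispersive smoothing effect of KdV, and it requires care with the boundary terms generated by the third-order operator. The trace estimate, by contrast, is essentially a by-product of the basic energy identity once the boundary conditions are used to discard all but the $y_x(t,0)$ term.
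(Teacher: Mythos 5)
Your proposal is correct and follows essentially the same route as the cited sources: the paper itself gives no proof of this theorem (it is quoted from \cite{rosier1997kdv} and \cite{cerpa2013control}), and the standard argument there is exactly your combination of Lumer--Phillips for strong solutions, the multipliers $y$ and $xy$ for the energy identity, the hidden trace regularity and the Kato-type $H^1$ smoothing, and a density argument to pass to mild solutions. The computations you sketch (in particular $\int_0^L xyy_{xxx}\,dx=\tfrac{3}{2}\int_0^L |y_x|^2\,dx$ under the given boundary conditions) are the right ones.
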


To ease the reading, let us denote the following Banach space, for all $T>0$, 
$$\mathcal{B}(T):=C(0,T;L^2(0,L))\cap L^2(0,T;H^1(0,L))$$
endowed with the norm 
\begin{equation}
\Vert y\Vert_{\mathcal{B}(T)}=\sup_{t\in [0,T]}\Vert y(t,.)\Vert_{L^2(0,L)}+\left(\int_0^T \Vert y(t,.)\Vert^2_{H^1(0,L)}dt\right)^{\frac{1}{2}}.
\end{equation}

Before studying the well-posedness of (\ref{nlkdv_sat}), we need a well-posedness result with a right-hand side. Given $g\in L^1(0,T;L^2(0,L)$, let us consider $y$ the unique solution \footnote{With $f=0$, the existence and the unicity of $y$ are insured since $A$ generates a $C_0$-semigroup of contractions. It follows from the semigroup theory the existence and the unicity of $y$ when $g\in L^1(0,T;L^2(0,L))$ (see \cite{pazy1983semigroups}).} to the following nonhomogeneous problem:
\begin{equation}
\label{kdv-2}
\left\{
\begin{array}{l}
y_t+y_x+y_{xxx}=g,\\
y(t,0)=y(t,L)=y_x(t,L)=0,\\
y(0,.)=y_0.
\end{array}
\right.
\end{equation}

Note that we need the following property on the saturation function, which will allow us to state that this type of nonlinearity belongs to the space $L^1(0,T;L^2(0,L))$.
\begin{lemma}
\label{lipschitz-satl2}
For all $(s,\tilde{s})\in L^2(0,L)^2$, we have
\begin{equation}
\Vert \sat(s)-\sat(\tilde{s})\Vert_{L^2(0,L)}\leq 3\Vert s-\tilde{s}\Vert_{L^2(0,L)}.
\end{equation}
\end{lemma}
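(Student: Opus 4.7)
The plan is to treat the two saturation operators separately, since $\sat$ denotes either $\satl$ or $\sath$, and to note in passing that the stated constant $3$ is not tight (a direct computation in fact gives constant $2$ for $\sath$ and constant $1$ for $\satl$).

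For the localized saturation $\satl$, the argument reduces to the pointwise observation that the scalar map $\texttt{sat}:\RR\to\RR$ is $1$-Lipschitz, being the metric projection onto the interval $[-u_0,u_0]$. Squaring and integrating the pointwise inequality $|\satl(s)(x)-\satl(\tilde s)(x)|\leq|s(x)-\tilde s(x)|$ over $[0,L]$ immediately yields the desired bound with constant $1$.

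For the Hilbert-space saturation $\sath$, I would carry out a short case analysis according to whether each of $\|s\|_{L^2(0,L)}$ and $\|\tilde s\|_{L^2(0,L)}$ lies below or above $u_0$. The case where both norms are $\leq u_0$ is immediate since $\sath$ then acts as the identity. When both are $\geq u_0$, the decomposition
$$
\frac{u_0\, s}{\|s\|_{L^2(0,L)}}-\frac{u_0\,\tilde s}{\|\tilde s\|_{L^2(0,L)}}=\frac{u_0(s-\tilde s)}{\|s\|_{L^2(0,L)}}+\frac{u_0\,\tilde s\,(\|\tilde s\|_{L^2(0,L)}-\|s\|_{L^2(0,L)})}{\|s\|_{L^2(0,L)}\,\|\tilde s\|_{L^2(0,L)}}
$$
combined with the triangle inequality, the reverse triangle inequality $\bigl|\|\tilde s\|_{L^2(0,L)}-\|s\|_{L^2(0,L)}\bigr|\leq\|s-\tilde s\|_{L^2(0,L)}$, and the bound $\|s\|_{L^2(0,L)}\geq u_0$ yields a Lipschitz constant of $2$. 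In the mixed case $\|s\|_{L^2(0,L)}\leq u_0\leq\|\tilde s\|_{L^2(0,L)}$ (the reverse being symmetric), the splitting
$$
s-\frac{u_0\,\tilde s}{\|\tilde s\|_{L^2(0,L)}}=(s-\tilde s)+\tilde s\Bigl(1-\frac{u_0}{\|\tilde s\|_{L^2(0,L)}}\Bigr)
$$
together with $0\leq\|\tilde s\|_{L^2(0,L)}-u_0\leq\|\tilde s\|_{L^2(0,L)}-\|s\|_{L^2(0,L)}\leq\|\tilde s-s\|_{L^2(0,L)}$ again gives a constant of $2$.

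There is no real obstacle: the lemma is a routine exercise in the triangle and reverse triangle inequalities once the case analysis is organized. A more structural shortcut would be to observe that $\sath$ is precisely the metric projection of $L^2(0,L)$ onto the closed ball of radius $u_0$, hence nonexpansive in Hilbert space and therefore $1$-Lipschitz; this bypasses the case analysis entirely, though the elementary argument above has the merit of keeping the proof self-contained.
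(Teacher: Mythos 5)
Your proof is correct, and for the $\satl$ half it coincides with the paper's argument: both rest on the pointwise $1$-Lipschitz property of the scalar map $\texttt{sat}$ (the paper simply cites Khalil for this), followed by squaring and integrating. The difference lies in the $\sath$ half: the paper does not prove anything there, it merely refers the reader to Theorem 5.1 of Slemrod's article for the Lipschitz estimate with constant $3$, whereas you supply a complete, self-contained case analysis that actually yields the sharper constant $2$, and your closing observation is the cleanest route of all --- $\sath$ is exactly the metric projection of $L^2(0,L)$ onto the closed ball of radius $u_0$, hence nonexpansive, so the true Lipschitz constant is $1$ for both operators. What your approach buys is independence from the cited reference and a tighter constant (which would, for instance, improve the factor $3$ propagated into the estimates of Propositions \ref{proposition-reg} and the uniqueness lemma); what the paper's approach buys is brevity, and the constant $3$ is in any case harmless since it is only ever used qualitatively to establish global Lipschitz continuity. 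One small point of care: in the mixed case you should note explicitly that the chain $0\leq\Vert \tilde s\Vert_{L^2(0,L)}-u_0\leq\Vert \tilde s\Vert_{L^2(0,L)}-\Vert s\Vert_{L^2(0,L)}$ uses $\Vert s\Vert_{L^2(0,L)}\leq u_0$, which is precisely the case hypothesis, so no gap arises.
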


\begin{proof}
For $\sat=\sath$, please refer to \cite[Theorem 5.1.]{slemrod1989mcss} for a proof. For $\sat=\satl$, we know from \cite[Page 73]{bible_khalil} that for all $(s,\tilde{s})\in L^2(0,L)^2$ and for all $x\in [0,L]$, 
$$|\satl(s(x))-\satl(\tilde{s}(x))|\leq |s(x)-\tilde{s}(x)|.$$
Thus, we get 
$$\Vert \satl(s)-\satl(\tilde{s})\Vert_{L^2(0,L)}\leq \Vert s-\tilde{s}\Vert_{L^2(0,L)},$$
which concludes the proof of Lemma \ref{lipschitz-satl2}. 
\end{proof}

\startmodif We have the following proposition borrowed from \cite[Proposition 4.1]{rosier1997kdv}. \stopmodif

\begin{proposition}[\cite{rosier1997kdv}]
\label{proposition-reg-rosier}
If $y\in L^2(0,T;H^1(0,L))$, then $yy_x\in L^1(0,T;L^2(0,L))$ and the map $\psi_1: y\in L^2(0,T;H^1(0,L))\mapsto yy_x\in L^1(0,T;L^2(0,L))$ is continuous
\end{proposition}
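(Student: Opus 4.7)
The plan is to exploit the one-dimensional Sobolev embedding $H^1(0,L) \hookrightarrow L^\infty(0,L)$, which is the essential ingredient making the nonlinearity $yy_x$ tractable at this level of regularity. Without this embedding (e.g.\ in higher space dimensions) the statement would fail; in dimension one it reduces the proof to routine H\"older-type manipulations.

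First I would establish the mapping property. For almost every $t \in (0,T)$ one has $y(t,\cdot) \in H^1(0,L)$, so H\"older's inequality combined with the Sobolev embedding gives the pointwise-in-$t$ bound
\begin{equation*}
\|y(t,\cdot) y_x(t,\cdot)\|_{L^2(0,L)} \leq \|y(t,\cdot)\|_{L^\infty(0,L)} \|y_x(t,\cdot)\|_{L^2(0,L)} \leq C \|y(t,\cdot)\|_{H^1(0,L)}^2,
\end{equation*}
where $C$ depends only on $L$. Integrating over $t$ then yields $\|y y_x\|_{L^1(0,T; L^2(0,L))} \leq C \|y\|_{L^2(0,T; H^1(0,L))}^2$, which shows both that $\psi_1$ takes values in $L^1(0,T; L^2(0,L))$ and that it is bounded on bounded sets.

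For continuity, since $\psi_1$ is quadratic rather than linear, the key move is the bilinear decomposition of the difference
\begin{equation*}
y y_x - \tilde{y}\tilde{y}_x = (y - \tilde{y}) y_x + \tilde{y}(y - \tilde{y})_x.
\end{equation*}
Applying the same H\"older--Sobolev bound to each summand pointwise in $t$, and then Cauchy--Schwarz in $t$ to factor out the $L^2(0,T;H^1(0,L))$ norms, I would arrive at the local Lipschitz estimate
\begin{equation*}
\|\psi_1(y) - \psi_1(\tilde{y})\|_{L^1(0,T; L^2(0,L))} \leq C \bigl(\|y\|_{L^2(0,T; H^1(0,L))} + \|\tilde{y}\|_{L^2(0,T; H^1(0,L))}\bigr) \|y - \tilde{y}\|_{L^2(0,T; H^1(0,L))},
\end{equation*}
from which continuity is immediate.

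There is no genuine obstacle: once the 1D Sobolev embedding is in hand, everything else is bookkeeping. The only subtlety worth flagging is that, because $\psi_1$ is quadratic, continuity does not follow directly from applying the mapping bound to $y - \tilde{y}$; the bilinear decomposition of the difference is what produces a Lipschitz constant proportional to $\|y\|+\|\tilde{y}\|$ and thereby gives continuity on all of $L^2(0,T; H^1(0,L))$.
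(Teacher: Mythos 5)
Your argument is correct. Note that the paper itself does not prove this proposition: it is quoted verbatim from \cite[Proposition 4.1]{rosier1997kdv}, and your proof --- the pointwise-in-$t$ bound $\Vert y y_x\Vert_{L^2(0,L)}\leq \Vert y\Vert_{L^\infty(0,L)}\Vert y_x\Vert_{L^2(0,L)}\leq C\Vert y\Vert_{H^1(0,L)}^2$ via the one-dimensional Sobolev embedding, followed by the bilinear splitting $yy_x-\tilde y\tilde y_x=(y-\tilde y)y_x+\tilde y(y-\tilde y)_x$ and Cauchy--Schwarz in $t$ --- is exactly the standard proof of that cited result, so there is nothing to add.
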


We have also the following proposition.
\begin{proposition}
\label{proposition-reg}
\startmodif Assume $a:[0,L]\rightarrow \mathbb{R}$ satisfies \eqref{gain-control}\stopmodif. If $y\in L^2(0,T;H^1(0,L))$, then  $\sat(ay)\in L^1(0,T;L^2(0,L))$ and the map $\psi_2: y\in L^2(0,T;H^1(0,L)) \mapsto \sat(ay)\in L^1(0,T;L^2(0,L))$ is continuous;
\end{proposition}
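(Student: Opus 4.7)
The plan is to exploit two elementary facts: the saturation operator is pointwise bounded (so $\sat(ay)(t,\cdot)$ has an $L^2$-norm controlled uniformly in $t$ by a constant depending only on $u_0$ and $L$), and it is globally Lipschitz as an $L^2(0,L)$-operator with constant $3$ by Lemma~\ref{lipschitz-satl2}. Together with the essential boundedness of $a$ these immediately give both integrability and continuity of $\psi_2$.

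For the first assertion, I would observe that regardless of which saturation is used, $\sat(ay)(t,\cdot)$ satisfies $\|\sat(ay)(t,\cdot)\|_{L^2(0,L)} \leq u_0 \sqrt{L}$ (using the pointwise bound $|\satl(s)(x)|\leq u_0$ in the localized case, or directly $\|\sath(s)\|_{L^2(0,L)}\leq u_0$ for the $L^2$-saturation). Integrating in time yields $\|\sat(ay)\|_{L^1(0,T;L^2(0,L))} \leq T u_0 \sqrt{L}$, so $\sat(ay) \in L^1(0,T;L^2(0,L))$ for every $y \in L^2(0,T;H^1(0,L))$ (in fact for every $y$ measurable).

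For continuity, given $y,\tilde y \in L^2(0,T;H^1(0,L))$, Lemma~\ref{lipschitz-satl2} applied at almost every $t \in (0,T)$ gives
\begin{equation*}
\|\sat(ay)(t,\cdot) - \sat(a\tilde y)(t,\cdot)\|_{L^2(0,L)} \leq 3 \|a(y-\tilde y)(t,\cdot)\|_{L^2(0,L)} \leq 3 a_1 \|(y-\tilde y)(t,\cdot)\|_{L^2(0,L)},
\end{equation*}
where I would note that \eqref{gain-control} provides an essential bound $\|a\|_{L^\infty(0,L)} \leq a_1$ (extending $a$ by any bounded value outside $\omega$; the relevant bound is $a_1$). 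Integrating in $t$ and applying the Cauchy--Schwarz inequality gives
\begin{equation*}
\int_0^T \|\sat(ay)(t,\cdot) - \sat(a\tilde y)(t,\cdot)\|_{L^2(0,L)}\,dt \leq 3 a_1 \sqrt{T}\, \|y - \tilde y\|_{L^2(0,T;L^2(0,L))},
\end{equation*}
and since $\|\cdot\|_{L^2(0,T;L^2(0,L))} \leq \|\cdot\|_{L^2(0,T;H^1(0,L))}$, the map $\psi_2$ is Lipschitz continuous from $L^2(0,T;H^1(0,L))$ to $L^1(0,T;L^2(0,L))$, which is stronger than continuity.

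There is no real obstacle here; the only point that requires attention is the interpretation of $a$: \eqref{gain-control} only specifies bounds on the support $\omega$, so one must either assume implicitly or state that $a \in L^\infty(0,L)$ with essential bound dominated by $a_1$, which is the natural reading in context. All other ingredients are standard: Lipschitzness of $\sat$ on $L^2(0,L)$ from Lemma~\ref{lipschitz-satl2} and the trivial uniform bound from the saturation.
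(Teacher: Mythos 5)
Your proof is correct and follows essentially the same route as the paper: the continuity of $\psi_2$ is obtained exactly as in the text, from the $L^2(0,L)$-Lipschitz bound of Lemma~\ref{lipschitz-satl2}, the bound $a\leq a_1$, and the Cauchy--Schwarz inequality in time (the paper's constant $3\sqrt{L}a_1\sqrt{T}$ versus your sharper $3a_1\sqrt{T}$ is immaterial). The only difference is cosmetic: for membership in $L^1(0,T;L^2(0,L))$ you invoke the uniform bound $\Vert\sat(s)\Vert_{L^2(0,L)}\leq u_0\max(1,\sqrt{L})$ coming from the saturation level (note that for $\sath$ the bound is $u_0$, so $u_0\sqrt L$ alone would fail when $L<1$), whereas the paper simply sets $z=0$ in the same Lipschitz estimate; both are valid, and your remark that \eqref{gain-control} must be read together with an $L^\infty$ bound $a\leq a_1$ on all of $[0,L]$ matches the paper's implicit usage.
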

\begin{proof}
Let $y,z\in L^2(0,T;H^1(0,L))$. We have, using Lemma \ref{lipschitz-satl2} and H\"older inequality
\startmodif
\begin{eqnarray}
\label{regularity-sat-l1}
\Vert \sat(ay) - \sat(az)\Vert_{L^1(0,T;L^2(0,L))} &&\leq 3\int_0^T \Vert a(y-z)\Vert_{L^2(0,L)}\nonumber \\
&&\leq 3\sqrt{L}a_1\sqrt{T}\Vert (y-z)\Vert_{L^2(0,T;H^1(0,L))}
\end{eqnarray}
\stopmodif
Plugging $z=0$ in (\ref{regularity-sat-l1}) yields $\sat(ay)\in L^1(0,T;L^2(0,L))$ and (\ref{regularity-sat-l1}) implies the continuity of the map $\psi_2$. It concludes the proof of Proposition \ref{proposition-reg}.\end{proof}
%\startmodif We have the following result borrowed from \cite[Proposition 4.1]{rosier1997kdv}. \stopmodif
%\begin{proposition}[\cite{rosier1997kdv}]
%\label{rosier-fixed}
%For $g\in L^1(0,T;L^2(0,L))$ and $y_0\in L^2(0,L)$, the mild solution to (\ref{kdv-2}) belongs to $\mathcal{B}(T)$. Moreover the map $\psi_3: g\in L^1(0,T;H^1(0,L))\mapsto y\in L^2(0,T;H^1(0,L))$ is continuous.
%\end{proposition} 
\startmodif Let us study the non-homogenenous linear KdV equation with $y_0(x):=0$. For any $g\in L^1(0,T;L^2(0,L))$, it is described with the following equation
\begin{equation}
\label{kdv-zero}
\left\{
\begin{split}
&y_t+y_x+y_{xxx}+g=0,\\
&y(t,0)=y(t,L)=y_x(t,L)=0,\\
&y(0,x)=0.
\end{split}
\right.
\end{equation}
It can be rewritten as follows
\begin{equation}
\left\{
\begin{split}
&\dot{y}=Ay+g,\\
&y(0)=0.
\end{split}
\right.
\end{equation}
By standard semigroup theory (see \cite{pazy1983semigroups}), for any positive value $t$ and any function $g\in L^1(\mathbb{R}_{\geq 0};L^2(0,L))$, the solution to \eqref{kdv-zero} can be expressed as follows
\begin{equation}
y(t)=\int_0^t W(t-\tau)g(\tau,x)d\tau.
\end{equation}
Finally, we have the following result borrowed from \cite[Lemma 2.2]{rosier2006global}
\begin{proposition}[\cite{rosier2006global}]
\label{prop-w(t-tau)}
There exists a positive value $C_1$ such that for any positive value $T$ and any function $g\in L^1(0,T;L^2(0,L))$ the solution to \eqref{kdv-zero} satisfies the following inequality, 
\begin{equation}
\left\Vert \int_0^t W(t-\tau)g(\tau,x)d\tau\right\Vert_{\mathcal{B}(T)} \leq C_1\int_0^T \Vert g(\tau,.)\Vert_{L^2(0,L)}d\tau.
\end{equation}
\end{proposition}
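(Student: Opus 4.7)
The plan is to estimate separately the two components of the $\mathcal{B}(T)$-norm, namely $\sup_{t\in[0,T]}\|y(t,\cdot)\|_{L^2(0,L)}$ and $\left(\int_0^T \|y(t,\cdot)\|^2_{H^1(0,L)}dt\right)^{1/2}$, and then combine them. For the first component, the argument is immediate: since $W(t)$ is a strongly continuous semigroup of contractions on $L^2(0,L)$, we have $\|W(t-\tau)g(\tau,\cdot)\|_{L^2(0,L)}\le \|g(\tau,\cdot)\|_{L^2(0,L)}$, so taking the $L^2$-norm in $x$ inside the time integral and then the supremum in $t$ gives directly $\sup_{t\in[0,T]}\|y(t,\cdot)\|_{L^2(0,L)}\le \int_0^T\|g(\tau,\cdot)\|_{L^2(0,L)}d\tau$.

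The core of the argument is the $L^2(0,T;H^1(0,L))$ bound, which is the hidden smoothing effect typical of KdV-type equations. I would first establish it assuming $g$ is smooth and compactly supported in time (so that $y$ is a strong solution) and then pass to the limit by density using Lemma \ref{lipschitz-satl2}-style continuity arguments. For the smooth case, the idea is the multiplier method: multiply $y_t+y_x+y_{xxx}=g$ by $xy$ and integrate by parts over $[0,L]$, carefully exploiting the three boundary conditions $y(t,0)=y(t,L)=y_x(t,L)=0$. This yields an identity of the schematic form
\begin{equation*}
\frac{1}{2}\frac{d}{dt}\int_0^L xy^2\,dx+\frac{3}{2}\int_0^L y_x^2\,dx=\frac{1}{2}\int_0^L y^2\,dx+\int_0^L xyg\,dx,
\end{equation*}
with all boundary contributions either vanishing or having a favorable sign.

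Integrating in $t$ from $0$ to $T$ (and using $y(0,\cdot)=0$) gives, after Cauchy--Schwarz on the right-hand side,
\begin{equation*}
\int_0^T \|y_x(t,\cdot)\|_{L^2(0,L)}^2\,dt\le C\left(\sup_{t\in[0,T]}\|y(t,\cdot)\|_{L^2(0,L)}\right)\int_0^T \|g(\tau,\cdot)\|_{L^2(0,L)}d\tau + C\sup_{t\in[0,T]}\|y(t,\cdot)\|_{L^2(0,L)}^2\cdot T',
\end{equation*}
and after using the first step $\sup_t\|y(t,\cdot)\|_{L^2(0,L)}\le \|g\|_{L^1(0,T;L^2(0,L))}$ and Poincar\'e's inequality (justified by $y(t,0)=y(t,L)=0$), one absorbs the $L^2$ part of the $H^1$ norm and obtains the desired bound by $C_1\int_0^T\|g(\tau,\cdot)\|_{L^2(0,L)}d\tau$. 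Extension to general $g\in L^1(0,T;L^2(0,L))$ follows by approximating $g$ by smooth functions, applying the estimate to the corresponding strong solutions, and passing to the limit in $\mathcal{B}(T)$.

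The main obstacle, as always in estimates of this type, is the multiplier step: one has to track all the boundary terms produced by integrating $\int_0^L xyy_{xxx}\,dx$ by parts twice and verify that the combination of the three homogeneous boundary conditions kills the bad ones while leaving a positive coefficient in front of $\int_0^L y_x^2\,dx$. A secondary technical point is the justification of the computation for mild solutions, which is handled by the standard density argument using the semigroup setting of Theorem \ref{lkdv-wp} together with the continuity of the map $g\mapsto y$ from $L^1(0,T;L^2(0,L))$ into $\mathcal{B}(T)$ induced by the estimates themselves.
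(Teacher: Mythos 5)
The paper does not actually prove this proposition: it is quoted verbatim from \cite[Lemma 2.2]{rosier2006global}. Your argument is the standard one and is essentially the proof given in that reference: the contraction property of $W(t)$ handles the $C([0,T];L^2(0,L))$ part of the $\mathcal{B}(T)$-norm, and the Kato-type smoothing identity obtained from the multiplier $xy$ handles the $L^2(0,T;H^1(0,L))$ part, first for regular data and then by density (the density step is correctly closed by applying the linear estimate to differences $y_n-y_m$, so the map $g\mapsto y$ extends continuously to $L^1(0,T;L^2(0,L))$). Your multiplier identity checks out: in $\int_0^L xy(y_t+y_x+y_{xxx}-g)\,dx=0$ the boundary terms $[xy^2]_0^L$, $[xyy_{xx}]_0^L$ and $[yy_x]_0^L$ vanish because $y(t,0)=y(t,L)=0$, while $[xy_x^2]_0^L=Ly_x(t,L)^2=0$, leaving the favorable coefficient $\tfrac32$ in front of $\int_0^L y_x^2\,dx$. (The passing appeal to Lemma \ref{lipschitz-satl2} in the density step is out of place -- that lemma concerns the saturation map and plays no role in this purely linear estimate -- but nothing hinges on it.)

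The one substantive point to flag is the quantifier on $T$. Your computation yields $\int_0^T\|y_x(t,\cdot)\|_{L^2(0,L)}^2\,dt\le \tfrac{T+2L}{3}\bigl(\int_0^T\|g(\tau,\cdot)\|_{L^2(0,L)}\,d\tau\bigr)^2$, i.e.\ a constant $C_1$ of order $\sqrt{L+T}$, not a constant uniform in $T$ as the literal wording of the proposition suggests. This is not a defect of your proof: a $T$-uniform constant is in fact impossible. For a critical length, say $L=2\pi$, taking $g(\tau,x)=\epsilon^{-1}\mathbf{1}_{[0,\epsilon]}(\tau)(1-\cos x)$ keeps $\int_0^T\|g(\tau,\cdot)\|_{L^2(0,L)}\,d\tau$ fixed while the Duhamel term is essentially the stationary profile $1-\cos x$ for $t>\epsilon$, so its $L^2(0,T;H^1(0,L))$-norm grows like $\sqrt{T}$. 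The proposition must therefore be read with $C_1=C_1(T)$ (which is how it is stated in \cite{rosier2006global}); this is exactly what your proof delivers and all the paper needs, since the estimate is only ever invoked on subintervals $[0,T']$ of a fixed horizon $[0,T]$.
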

\stopmodif

\subsection{Proof of Theorem \ref{nl-theorem-wp}}

Let us begin this section with a technical lemma.
\begin{lemma}\em (\cite{zhang1999KdV}) \em
\label{zhang-regularity}
For any $T>0$ and $y,z\in\mathcal{B}(T)$,
\begin{equation}
\int_0^T \Vert (y(t,.)z(t,.))_x\Vert_{L^2(0,L)}dt\leq 2 \sqrt{T}\Vert y\Vert_{\mathcal{B}(T)}\Vert z\Vert_{\mathcal{B}(T)}.
\end{equation}
\end{lemma}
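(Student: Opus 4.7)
The plan is a product estimate in one dimension combined with H\"older's inequality in time. I begin from the Leibniz identity $(y(t,\cdot)z(t,\cdot))_{x}=y_{x}(t,\cdot)z(t,\cdot)+y(t,\cdot)z_{x}(t,\cdot)$ and use the triangle inequality in $L^{2}(0,L)$, which reduces the claim to bounding each of
\begin{equation*}
I_{1}:=\int_{0}^{T}\|y_{x}(t,\cdot)\,z(t,\cdot)\|_{L^{2}(0,L)}\,dt\quad\text{and}\quad I_{2}:=\int_{0}^{T}\|y(t,\cdot)\,z_{x}(t,\cdot)\|_{L^{2}(0,L)}\,dt
\end{equation*}
by $\sqrt{T}\,\|y\|_{\mathcal{B}(T)}\,\|z\|_{\mathcal{B}(T)}$. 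By symmetry, $I_{2}$ is handled by the same argument with the roles of $y$ and $z$ exchanged, so I focus on $I_{1}$.

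For each fixed $t$, H\"older's inequality in $x$ yields $\|y_{x}z\|_{L^{2}(0,L)}\le \|z\|_{L^{\infty}(0,L)}\|y_{x}\|_{L^{2}(0,L)}$. I then invoke the one-dimensional Sobolev embedding $H^{1}(0,L)\hookrightarrow L^{\infty}(0,L)$, in the form of the Gagliardo--Nirenberg interpolation $\|z\|_{L^{\infty}(0,L)}^{2}\le C\bigl(\|z\|_{L^{2}(0,L)}^{2}+\|z\|_{L^{2}(0,L)}\|z_{x}\|_{L^{2}(0,L)}\bigr)$, which splits $\|z\|_{L^{\infty}(0,L)}$ into a ``slow'' part uniformly bounded in $t$ by $\sup_{t\in[0,T]}\|z(t,\cdot)\|_{L^{2}(0,L)}\le \|z\|_{\mathcal{B}(T)}$, and a ``fast'' part whose $L^{2}(0,T)$ norm is controlled by $\|z\|_{L^{2}(0,T;H^{1}(0,L))}\le \|z\|_{\mathcal{B}(T)}$.

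After integrating in $t$, I apply Cauchy--Schwarz in time, pairing the constant $1\in L^{2}(0,T)$ against the $t$-dependent factors: the $y$-side contributes $\|y_{x}\|_{L^{2}(0,T;L^{2}(0,L))}\le \|y\|_{\mathcal{B}(T)}$, and the $z$-side is absorbed, along the lines just described, into $\|z\|_{\mathcal{B}(T)}$. The factor $1\in L^{2}(0,T)$ produces the desired $\sqrt{T}$. Combining the estimates for $I_{1}$ and $I_{2}$ via the triangle inequality yields the claimed bound with a total constant at most $2$.

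The main obstacle is the careful bookkeeping of the H\"older and Sobolev/Gagliardo--Nirenberg exponents so that each of $y$ and $z$ contributes exactly one copy of its $\mathcal{B}(T)$-norm and the residual time factor is exactly $\sqrt{T}$. No tool deeper than the one-dimensional Sobolev embedding and Cauchy--Schwarz is required.
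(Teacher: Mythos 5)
The paper does not actually prove this lemma: it is quoted from \cite{zhang1999KdV} without proof, so there is no internal argument to compare yours against. Your strategy (Leibniz rule, H\"older in $x$, the one-dimensional Gagliardo--Nirenberg estimate, H\"older in time) is the standard route in this literature, but the gap sits exactly at the step you defer to ``bookkeeping.'' After $\Vert y_xz\Vert_{L^2(0,L)}\leq \Vert z\Vert_{L^\infty(0,L)}\Vert y_x\Vert_{L^2(0,L)}$ and the Gagliardo--Nirenberg splitting, the ``slow'' piece works: $\Vert z\Vert_{L^2}$ is bounded uniformly in $t$ by $\sup_t\Vert z(t,.)\Vert_{L^2}$, and pairing $1\in L^2(0,T)$ with $\Vert y_x\Vert_{L^2_x}\in L^2(0,T)$ produces $\sqrt{T}\,\Vert y\Vert_{L^2(0,T;H^1)}$. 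But the ``fast'' piece behaves like $\Vert z\Vert_{L^2}^{1/2}\Vert z_x\Vert_{L^2}^{1/2}$, which is only in $L^4(0,T)$ (its square is in $L^2_t$ because $\Vert z_x\Vert_{L^2_x}\in L^2_t$), while $\Vert y_x\Vert_{L^2_x}$ is only in $L^2(0,T)$; their product lies in $L^{4/3}(0,T)$, so H\"older against the constant $1$ yields a factor $T^{1/4}$, not $T^{1/2}$. You cannot place $1$ in $L^2_t$ against both genuinely time-dependent factors at once, which is what your sentence ``the $z$-side is absorbed\dots into $\Vert z\Vert_{\mathcal{B}(T)}$'' implicitly requires (it would need $\sup_t\Vert z(t,.)\Vert_{H^1}<\infty$, which membership in $\mathcal{B}(T)$ does not give).

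This is not repairable bookkeeping: the bound with the factor exactly $\sqrt{T}$ fails for small $T$. Take $y=z=\phi_N$ time-independent with $\phi_N(x)=N^{1/2}\psi(Nx)$ for a fixed bump $\psi$; then $\Vert\phi_N\Vert_{L^2}=O(1)$, $\Vert\phi_N\Vert_{H^1}=O(N)$, $\Vert(\phi_N^2)_x\Vert_{L^2}=O(N^{3/2})$, and with $T=N^{-2}$ the left-hand side is of order $N^{-1/2}$ while $2\sqrt{T}\Vert\phi_N\Vert_{\mathcal{B}(T)}^2$ is of order $N^{-1}$, so the ratio diverges. The sharp small-$T$ rate is $T^{1/4}$; an honest version of your computation gives $\int_0^T\Vert(yz)_x\Vert_{L^2}dt\leq C(L)\bigl(T^{1/4}+T^{1/2}\bigr)\Vert y\Vert_{\mathcal{B}(T)}\Vert z\Vert_{\mathcal{B}(T)}$, with a constant inherited from the Sobolev embedding on $(0,L)$ --- which also undercuts your claim of a final constant ``at most $2$.'' For the only use made of the lemma in this paper (the contraction estimate in the local well-posedness argument), any modulus vanishing as $T\to 0$ suffices, so the weaker, correct estimate would serve just as well; but as written your proof does not establish the stated inequality, and no proof can.
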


The following is a local well-posedness result.
\begin{lemma}\em (Local well-posedness) \em
\label{local-wp}
Let $T>0$ be given. For any $y_0\in L^2(0,L)$, there exists $T^\prime \in [0,T]$ depending on $\Vert y_0\Vert_{L^2(0,L)}$ such that \eqref{nlkdv_sat} admits a unique mild solution $y\in\mathcal{B}(T^\prime)$. 
\end{lemma}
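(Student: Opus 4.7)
The plan is to use a Banach fixed-point argument on the mild formulation of \eqref{nlkdv_sat}. Writing the equation as $\dot y = Ay - yy_x - \sat(ay)$ with $y(0)=y_0$, Duhamel's formula suggests defining, on the ball $B_R = \{y\in \mathcal{B}(T') : \|y\|_{\mathcal{B}(T')}\leq R\}$, the map
\begin{equation*}
\Phi(y)(t,\cdot) = W(t)y_0 - \int_0^t W(t-\tau)\bigl[ y(\tau,\cdot)y_x(\tau,\cdot) + \sat(ay)(\tau,\cdot)\bigr]d\tau.
\end{equation*}
The goal is to choose $R$ and $T'\in(0,T]$ depending only on $\|y_0\|_{L^2(0,L)}$ so that $\Phi$ sends $B_R$ into itself and is a strict contraction; the fixed point is then the mild solution, and its uniqueness in $\mathcal{B}(T')$ follows from the contraction property.

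First I would estimate $\|\Phi(y)\|_{\mathcal{B}(T')}$. The linear part $W(t)y_0$ is bounded in $\mathcal{B}(T')$ by $C_0\|y_0\|_{L^2(0,L)}$ thanks to Theorem~\ref{lkdv-wp}. For the Duhamel integral, Proposition~\ref{prop-w(t-tau)} gives
\begin{equation*}
\left\|\int_0^{\cdot} W(\cdot-\tau)g(\tau,\cdot)d\tau\right\|_{\mathcal{B}(T')} \leq C_1\|g\|_{L^1(0,T';L^2(0,L))}.
\end{equation*}
Applying this with $g=yy_x$ and using Lemma~\ref{zhang-regularity} (with $z=y$, and $yy_x=\tfrac12(y^2)_x$) yields a bound of order $\sqrt{T'}\|y\|_{\mathcal{B}(T')}^2$, while taking $g=\sat(ay)$ and invoking the estimate obtained in the proof of Proposition~\ref{proposition-reg} yields a bound of order $\sqrt{T'}\|y\|_{\mathcal{B}(T')}$. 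Collecting these, I would get
\begin{equation*}
\|\Phi(y)\|_{\mathcal{B}(T')} \leq C_0\|y_0\|_{L^2(0,L)} + C_1\sqrt{T'}\bigl(R^2 + 3\sqrt{L}\,a_1 R\bigr).
\end{equation*}
Choosing $R := 2C_0\|y_0\|_{L^2(0,L)}$ and then $T'$ small enough (depending on $R$ and therefore on $\|y_0\|_{L^2(0,L)}$) makes the second term $\leq R/2$, so $\Phi(B_R)\subset B_R$.

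Next, for the contraction estimate, I would write $yy_x - \tilde y \tilde y_x = \tfrac12 \bigl((y+\tilde y)(y-\tilde y)\bigr)_x$ and apply Lemma~\ref{zhang-regularity} to get
\begin{equation*}
\|yy_x - \tilde y\tilde y_x\|_{L^1(0,T';L^2(0,L))} \leq \sqrt{T'}\bigl(\|y\|_{\mathcal{B}(T')}+\|\tilde y\|_{\mathcal{B}(T')}\bigr)\|y-\tilde y\|_{\mathcal{B}(T')},
\end{equation*}
and use Lemma~\ref{lipschitz-satl2} together with H\"older in time, exactly as in \eqref{regularity-sat-l1}, to obtain
\begin{equation*}
\|\sat(ay)-\sat(a\tilde y)\|_{L^1(0,T';L^2(0,L))} \leq 3\sqrt{L}\,a_1\sqrt{T'}\,\|y-\tilde y\|_{\mathcal{B}(T')}.
\end{equation*}
Combined with Proposition~\ref{prop-w(t-tau)}, these yield
\begin{equation*}
\|\Phi(y)-\Phi(\tilde y)\|_{\mathcal{B}(T')} \leq C_1\sqrt{T'}\bigl(2R + 3\sqrt{L}\,a_1\bigr)\|y-\tilde y\|_{\mathcal{B}(T')},
\end{equation*}
so shrinking $T'$ further (still only in terms of $\|y_0\|_{L^2(0,L)}$) makes the prefactor strictly less than $1$, and Banach's theorem produces a unique fixed point in $B_R$.

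The main obstacle is not conceptual but book-keeping: the saturation term is only Lipschitz in the $L^2$-norm (Lemma~\ref{lipschitz-satl2}), not in the $H^1$-norm, so one must be careful to control it in $L^1_tL^2_x$ via the $L^2_tL^2_x$ embedding coming from $\mathcal{B}(T')\hookrightarrow L^2(0,T';H^1(0,L))$, which is exactly what the $\sqrt{T'}$ factor in Proposition~\ref{proposition-reg} provides. Once this is in place, everything degenerates linearly in $\sqrt{T'}$, so choosing $T'$ sufficiently small closes both the invariance and the contraction estimates simultaneously, and the lemma follows.
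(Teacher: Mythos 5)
Your proposal is correct and follows essentially the same route as the paper: the same Duhamel/fixed-point map on the ball of radius $R=2C_0\Vert y_0\Vert_{L^2(0,L)}$ in $\mathcal{B}(T^\prime)$, with the invariance and contraction estimates obtained from Theorem \ref{lkdv-wp}, Proposition \ref{prop-w(t-tau)}, Lemma \ref{zhang-regularity} and the Lipschitz bound of Lemma \ref{lipschitz-satl2} exactly as in the paper. Your write-up is in fact slightly more explicit than the paper's on the contraction step (the factorization $yy_x-\tilde y\tilde y_x=\tfrac12\bigl((y+\tilde y)(y-\tilde y)\bigr)_x$), which the paper only asserts.
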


\begin{proof}

We follow the strategy of \cite{chapouly2009global} and \cite{rosier2006global}. We know from Proposition \ref{proposition-reg} that, for all $z\in L^1(0,T;L^2(0,L))$, there exists a unique mild solution to the following system
\begin{equation}
\label{kdv-fixed-point}
\left\{
\begin{array}{l}
y_t+y_x+y_{xxx}=-zz_x-\sat(az),\\
y(t,0)=y(t,L)=y_x(t,L)=0,\\
y(0,x)=y_0(x).
\end{array}
\right.
\end{equation}
Solution to \eqref{kdv-fixed-point} can be written in its integral form
\begin{equation}
y(t)=W(t)y_0-\int_0^t W(t-\tau)(zz_x)(\tau)d\tau-\int_0^t W(t-\tau)\sat(az(\tau,.))d\tau.
\end{equation}
For given $y_0\in L^2(0,L)$, let $r$ and $T^\prime$ be positive constants to be chosen later. We define
\begin{equation}
S_{T^\prime ,r}=\lbrace z\in \mathcal{B}(T^\prime),\: \Vert z\Vert_{\mathcal{B}(T^\prime)}\leq r\rbrace,
\end{equation}
which is a closed, convex and bounded subset of $\mathcal{B}(T^\prime)$. Consequently, $S_{T^\prime ,r}$ is a complete metric space in the topology induced from $\mathcal{B}(T^\prime)$. We define a map $\Gamma$ on $S_{T^\prime,r}$ by, for all $t\in [0,T^\prime]$
\begin{equation}
\Gamma(z):=W(t)y_0-\int_0^t W(t-\tau)(zz_x)(\tau)d\tau-\int_0^t W(t-\tau)\sat(az(\tau,.))d\tau,\: \forall z\in S_{T^\prime,r}.
\end{equation}
We aim at proving that there exists a unique fixed point to this operator. \startmodif It follows from Proposition \ref{prop-w(t-tau)}, Lemma \ref{zhang-regularity} and the linear estimates given in Theorem \ref{lkdv-wp} that for every $z\in S_{T^\prime,r}$, there exists a positive value $C_2:=C_2(a_1,T,L,C_1)$ such that it holds
\begin{equation}
\begin{split}
\Vert \Gamma(z)\Vert_{\mathcal{B}(T^\prime)} &\leq C_0 \Vert y_0\Vert_{L^2(0,L)}+C_1\int_0^T (\Vert zz_x(\tau,.)\Vert_{L^2(0,L)}+\Vert \sat(az(\tau,.)\Vert_{L^2(0,L)}) d\tau\\ 
& \leq C_0\Vert y_0\Vert_{L^2(0,L)} +2C_1\sqrt{T^\prime} \Vert z\Vert^2_{\mathcal{B}(T^\prime)}+C_2\sqrt{T^\prime}\Vert z \Vert_{\mathcal{B}(T^\prime)}
%C_1 \sqrt{T^{\prime}}\Vert z\Vert_{\mathcal{B}(T^\prime)}^2+C_2\sqrt{T^{\prime}}\Vert z\Vert_{\mathcal{B}(T^\prime)},
\end{split}
\end{equation}
where the first line has been obtained with the linear estimates given in Theorem \ref{lkdv-wp} and the estimate given in Proposition \ref{prop-w(t-tau)} and the second line with Lemma \ref{zhang-regularity} and Proposition \ref{proposition-reg}.
%Indeed, note that from \eqref{dissipativity-regularity}
%$$\Vert W(t)y_0\Vert_{\mathcal{B}(T^\prime}\leq C_0\Vert y_0\Vert_{L^2(0,L)}$$
%Moreover, we have, for all $g\in L^1(0,T;L^2(0,L)$
%\begin{equation}
%\left\Vert \int_0^t W(t-\tau)g(\tau,.)d\tau\right\Vert_{C(0,T^\prime ;L^2(0,L)}\leq \int_0^{T^\prime} \Vert g\Vert_{L^1(0,T^\prime;L^2(0,L))},
%\end{equation}
%and
%\begin{equation}
%\left\Vert \int_0^t W(t-\tau)g(\tau,.)d\tau \right\Vert_{L^2(0,T^\prime ;H^1(0,L))}\leq 
%\end{equation}
We choose $r>0$ and $T^\prime>0$ such that 
\begin{equation}
\left\{
\begin{array}{l}
r=2C_0\Vert y_0\Vert_{L^2(0,L)},\\
2C_1\sqrt{T^\prime}r + C_2\sqrt{T^\prime}\leq \frac{1}{2},
\end{array}
\right.
\end{equation}
\stopmodif
in order to obtain
\begin{equation}
\Vert \Gamma(z)\Vert_{\mathcal{B}(T^\prime)}\leq r,\quad \forall z\in S_{T^\prime,r}.
\end{equation}
Thus, with such $r$ and $T^\prime$, $\Gamma$ maps $S_{T^\prime ,r}$ to $S_{T^\prime ,r}$. Moreover, one can prove \startmodif with Proposition \ref{prop-w(t-tau)}, Lemma \ref{zhang-regularity} and the linear estimates given in Theorem \ref{lkdv-wp}\stopmodif that
\begin{equation}
\Vert \Gamma(z_1)-\Gamma(z_2)\Vert_{\mathcal{B}(T^\prime)}\leq \frac{1}{2}\Vert z_1-z_2\Vert_{\mathcal{B}(T^\prime)},\: \forall z_1,z_2\in S_{T^\prime ,r}.
\end{equation}
The existence of mild solutions to the Cauchy problem \eqref{nlkdv_sat} follows by using the Banach fixed-point theorem \cite[Theorem 5.7]{brezis2010functional}. \end{proof}
Before proving the global well-posedness, we need the following lemma inspired by \cite{coroncrepeau2004missed} and \cite{chapouly2009global} which implies that if there exists a solution for some $T>0$ then the solution is unique.
\begin{lemma}
\label{uniqueness-coron-crepeau}
\startmodif Let $T>0$ and $a:[0,L]\rightarrow \mathbb{R}$ satisfying \eqref{gain-control}. There exists $C_{11}:=C_{11}(T,L)>0$ such that for every $y_0,z_0\in L^2(0,L)$ for which there exist mild solutions $y$ and $z$ of
\begin{equation}
\label{kdv-y}
\left\{
\begin{array}{l}
y_t+y_x+y_{xxx}+yy_x+\sat(ay)=0,\\
y(t,0)=y(t,L)=y_x(t,L)=0,\\
y(0,x)=y_0(x),
\end{array}
\right.
\end{equation}
and
\begin{equation}
\label{kdv-z}
\left\{
\begin{array}{l}
z_t+z_x+z_{xxx}+zz_x+\sat(az)=0,\\
z(t,0)=z(t,L)=z_x(t,L)=0,\\
z(0,x)=z_0(x),
\end{array}
\right.
\end{equation}
these solutions satisfy
\begin{equation}
\int_0^T \int_0^L (z_x(t,x)-y_x(t,x))^2dxdt\leq e^{C_{11}(1+\Vert y\Vert_{L^2(0,T;H^1(0,L))}+\Vert z\Vert_{L^2(0,T;H^1(0,L))})}\int_0^L (z_0(x)-y_0(x))^2dx,
\end{equation} 
\begin{equation}
\int_0^T \int_0^L (z(t,x)-y(t,x))^2dxdt\leq e^{C_{11}(1+\Vert y\Vert_{L^2(0,T;H^1(0,L))}+\Vert z\Vert_{L^2(0,T;H^1(0,L))})}\int_0^L (z_0(x)-y_0(x))^2dx.
\end{equation}
\stopmodif
\end{lemma}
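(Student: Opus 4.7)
The plan is to set $w := z - y$, derive the PDE satisfied by $w$, and perform energy estimates leading to Gronwall's inequality. Subtracting \eqref{kdv-z} from \eqref{kdv-y} and using the identity $zz_x - yy_x = \tfrac{1}{2}((z+y)w)_x$, the difference $w$ satisfies
\begin{equation*}
w_t + w_x + w_{xxx} + \tfrac{1}{2}((z+y)w)_x + \sat(az) - \sat(ay) = 0,
\end{equation*}
together with boundary conditions $w(t,0) = w(t,L) = w_x(t,L) = 0$ and initial datum $w(0,\cdot) = z_0 - y_0$.

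First I would multiply this equation by $w$ and integrate over $(0,L)$. The boundary conditions make the transport term vanish and convert the third-derivative term into $\tfrac{1}{2}|w_x(t,0)|^2$, giving
\begin{equation*}
\tfrac{1}{2}\tfrac{d}{dt}\|w\|_{L^2}^2 + \tfrac{1}{2}|w_x(t,0)|^2 = -\tfrac{1}{4}\int_0^L (z+y)_x w^2\,dx - \int_0^L w\bigl(\sat(az)-\sat(ay)\bigr)dx.
\end{equation*}
Lemma \ref{lipschitz-satl2} combined with \eqref{gain-control} bounds the saturation integral by $3a_1\|w\|_{L^2}^2$. For the cubic term, integration by parts rewrites $\int_0^L(z+y)_x w^2\,dx = -2\int_0^L(z+y) w w_x\,dx$, which I would bound by $C\|z+y\|_{H^1}\|w\|_{L^2}\|w_x\|_{L^2}$ via Cauchy--Schwarz together with the one-dimensional embedding $H^1(0,L)\hookrightarrow L^\infty(0,L)$. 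Since only boundary dissipation is available from this first estimate, I would then perform the complementary multiplier estimate with $xw$; the standard integrations by parts produce an interior $\tfrac{3}{2}\|w_x\|_{L^2}^2$ term on the left-hand side, at the cost of further lower-order contributions treated in the same way.

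Adding the two identities and invoking Young's inequality to absorb the $\varepsilon\|w_x\|_{L^2}^2$ contributions on the left, one arrives at
\begin{equation*}
\tfrac{d}{dt}\bigl(\|w\|_{L^2}^2 + \textstyle\int_0^L x w^2\,dx\bigr) + \|w_x\|_{L^2}^2 \leq C\bigl(1 + \|z\|_{H^1}^2 + \|y\|_{H^1}^2\bigr)\|w\|_{L^2}^2,
\end{equation*}
for some constant $C = C(L,a_1)$. The right-hand coefficient lies in $L^1(0,T)$ since $y,z \in L^2(0,T;H^1(0,L))$, so Gronwall's inequality applied to $E(t) := \|w(t,\cdot)\|_{L^2}^2 + \int_0^L xw(t,x)^2\,dx$ yields the desired bound on $\|w(t,\cdot)\|_{L^2}^2$, while integrating the inequality in time produces the bound on $\int_0^T\|w_x\|_{L^2}^2 dt$; both appear with an exponential factor of the stated form.

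The main obstacle is that the natural multiplier $w$ dissipates only at the boundary through $|w_x(t,0)|^2$ and cannot by itself absorb the interior cubic term $\int(z+y) w w_x dx$. The $xw$-multiplier is therefore essential in order to recover a full interior $\|w_x\|_{L^2}^2$-contribution. A further delicate point is ensuring that the resulting Gronwall coefficient is integrable in time, which forces one to exploit the $L^\infty$-bound on $z+y$ via the Sobolev embedding rather than pairing directly with $(z+y)_x$ in $L^2$, since only the squared $H^1$-norm fits against the given $L^2(0,T;H^1(0,L))$-regularity of $y$ and $z$.
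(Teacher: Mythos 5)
Your proposal is correct and follows essentially the same route as the paper: energy estimates for the difference $w=z-y$ with the multipliers $w$ and $xw$, the Lipschitz bound of Lemma \ref{lipschitz-satl2} for the saturation difference, the Sobolev embedding $H^1_0(0,L)\subset C([0,L])$ combined with Young's inequality for the cubic terms, and Gronwall's lemma. The only minor differences are that you write the nonlinear difference as $\tfrac{1}{2}((z+y)w)_x$ rather than $yw_x+z_xw$, and that you run a single Gronwall argument on the combined functional $\Vert w\Vert_{L^2(0,L)}^2+\int_0^L xw^2\,dx$ instead of treating the two multiplier identities sequentially, which incidentally lets you bypass the weighted Poincar\'e inequality \eqref{lemma16-cc} that the paper invokes.
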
 

\begin{proof}

We follow the strategy of \cite{coroncrepeau2004missed} and \cite{chapouly2009global}. Let us assume that for given $y_0\in L^2(0,L)$, there exist $T>0$ and two different solutions $y$ and $z$ to \eqref{kdv-y} and \eqref{kdv-z}, respectively, defined on $[0,T]\times [0,L]$. Then $\Delta:=z-y$ defined on $[0,T]\times [0,L]$ is a mild solution of
\begin{equation}
\label{kdv-delta}
\left\{
\begin{array}{l}
\Delta_t+\Delta_x+\Delta_{xxx}=-y\Delta_x-z_x\Delta-(\sat(az)-\sat(ay)),\\
\Delta(t,0)=\Delta(t,L)=\Delta_x(t,L)=0,\\
\Delta(0,x)=z_0(x)-y_0(x).
\end{array}
\right.
\end{equation}
Integrating by parts in
\begin{equation}
\int_0^L 2x\Delta(\Delta_t+\Delta_x+\Delta_{xxx}+y\Delta_x+z_x\Delta+\sat(az)-\sat(ay))dx=0,
\end{equation}
and using the boundary conditions of \eqref{kdv-delta}, we readily get
\begin{multline}
\frac{d}{dt}\int_0^L x\Delta^2dx+3\int_0^L \Delta^2_xdx= \int_0^L \Delta^2 dx-2\int_0^L xy\Delta\Delta_xdx\\
+\int_0^L z\Delta^2 dx+4\int_0^L xz\Delta\Delta_xdx-\int_0^L x\Delta(\sat(az)-\sat(ay))dx.
\end{multline}
By the boundary conditions and the continuous Sobolev embedding $H^1_0(0,L)\subset C([0,T])$, there exists $C_3=C_3(L)>0$ such that
\begin{equation}
2\left|\int_0^L xy\Delta\Delta_xdx\right|\leq C_3\Vert y_x\Vert_{L^2(0,L)}\int_0^L |x\Delta\Delta_x|dx.
\end{equation}
Thus,
\begin{equation}
2\left|\int_0^L xy\Delta\Delta_xdx\right|\leq \frac{1}{2}\int_0^L \Delta_x^2dx+\frac{C_3^2}{2}\Vert y_x\Vert^2_{L^2(0,L)}L\int_0^L x\Delta^2dx.
\end{equation}
Similarly,
\begin{equation}
4\left|\int_0^L xz\Delta\Delta_xdx\right|\leq \frac{1}{2}\int_0^L \Delta_x^2 dx+2C_3^2\Vert z_x\Vert^2_{L^2(0,L)}\int_0^L x\Delta^2dx.
\end{equation}
Moreover, since $\sat$ is globally \startmodif Lipschitz with constant 3 \stopmodif (as stated in Lemma \ref{lipschitz-satl2}) and \startmodif for all $x\in [0,L]$, $a(x)\leq a_1$, we use a H\"older inequality to get \stopmodif
\begin{equation}
\begin{array}{rcl}
\left|\int_0^L x\Delta(\sat(az)-\sat(ay))dx\right| &\leq& \Vert x\Delta\Vert_{L^2(0,L)}\Vert \sat(az)-\sat(ay)\Vert_{L^2(0,L)}\\
&\leq& 3\Vert a(x)\Delta\Vert_{L^2(0,L)}\Vert x\Delta\Vert_{L^2(0,L)}\\
&\leq& 3a_1\int_0^L x\Delta^2 dx.
\end{array}
\end{equation}
Note that, from \cite[Lemma 16]{coroncrepeau2004missed}, for every $\phi\in H^1(0,L)$ with $\phi(0)=0$, and every $d\in [0,L]$,
\begin{equation}
\label{lemma16-cc}
\int_0^L \phi^2dx\leq \frac{d^2}{2}\int_0^L \phi_x^2dx+\frac{1}{d}\int_0^L x\phi^2dx.
\end{equation}
Thus, from \eqref{lemma16-cc} there exists $C_4>0$ such that
$$
\int_0^L \Delta^2dx\leq \frac{1}{2}\int_0^L \Delta_x^2dx+C_4\int_0^L x\Delta^2dx.
$$
Moreover, with the boundary conditions of $z$ and the Sobolev embedding $H_0^1(0,L)\subset C([0,T])$, there exists $C_5=C_5(L)>0$ such that
$$
2\left|\int_0^L z\Delta^2dx\right|\leq C_5\Vert z_x\Vert_{L^2(0,L)}\int_0^L \Delta^2dx.
$$
Hence, using the boundary conditions of $\Delta$ and \eqref{lemma16-cc} with $d:=\min\{C_5^{-1/2}\Vert z_x\Vert^{-1/2}_{L^2(0,L)},L\}$, there exists $C_6=C_6(L)>0$ such that
\begin{equation}
2\int_0^L z\Delta^2 dx\leq \frac{1}{2}\int_0^L\Delta_x^2dx+C_6(1+\Vert z_x\Vert_{L^2(0,L)}^{3/2})\int_0^L x\Delta^2dx.
\end{equation}
Finally, there exists $C_7=C_7(L)>0$ such that
\begin{equation}
\frac{d}{dt}\int_0^L x\Delta^2dx+\int_0^L \Delta_x^2dx\leq C_7(1+\Vert y_x\Vert^2_{L^2(0,L)}+\Vert z_x\Vert^2_{L^2(0,L)})\int_0^L x\Delta^2dx.
\end{equation}
In particular,
\begin{equation}
\frac{d}{dt}\int_0^L x\Delta^2dx\leq C_7(1+\Vert y_x\Vert^2_{L^2(0,L)}+\Vert z_x\Vert^2_{L^2(0,L)})\int_0^L x\Delta^2dx.
\end{equation}
Using the Gr\"onwal Lemma, the last inequality and the initial conditions of $\Delta$, we get, for every $t\in[0,T]$,
\begin{equation}
\int_0^L x\Delta^2(t,x)dx\leq e^{C_7\left( T+\Vert y\Vert^2_{L^2(0,T;H^1(0,L))}+\Vert z\Vert^2_{L^2(0,T;H^1(0,L))}\right)}\int_0^L x(z_0(x)-y_0(x))^2dx,
\end{equation}
and thus, we obtain the existence of $C_8=C_8(T,L)$ such that
\begin{equation}
\label{unicity-1}
\int_0^T\int_0^L (z_x(t,x)-y_x(t,x))^2dxdt\leq e^{C_8\left(\Vert y\Vert^2_{L^2(0,T;H^1(0,L))}+\Vert z\Vert^2_{L^2(0,T;H^1(0,L))}\right)} \int_0^L (z_0(x)-y_0(x))^2dx.
\end{equation}
Similarly, integrating by parts in
\begin{equation}
\int_0^L \Delta(\Delta_t+\Delta_x+\Delta_{xxx}+y\Delta_x+z_x\Delta+\sat(az)-\sat(ay))dx=0
\end{equation}
we get, using the boundary conditions of $\Delta$,
\begin{equation}
\frac{1}{2}\frac{d}{dt}\int_0^L \Delta^2 dx+\frac{1}{2}\Delta^2_x(t,0)=-\int_0^L (y\Delta_x-2z\Delta_x)\Delta dx-\int_0^L \Delta(\sat(az)-\sat(ay))dx.
\end{equation}
Moreover,
\begin{equation}
\label{unicity-linf}
-\int_0^L (y\Delta_x-2z\Delta_x)\Delta\leq \int_0^L \Delta_x^2dx+\int_0^L \left(\frac{1}{2}y^2+2z^2\right)\Delta^2dx,
\end{equation}
and
\begin{equation}
\label{lipschitz-sat}
\left|\int_0^L \Delta(\sat(az)-\sat(ay))dx\right|\leq 3a_1\int_0^L \Delta^2 dx.
\end{equation}
Thanks to the continuous Sobolev embedding $H^1_0(0,L)\subset C([0,L])$, \eqref{lipschitz-sat} and \eqref{unicity-linf}, there exists $C_9=C_9(L)>0$ such that
\begin{equation}
\frac{1}{2}\frac{d}{dt}\int_0^L \Delta^2 dx\leq \int_0^L \Delta_x^2dx+C_9\left(\Vert y_x\Vert^2_{L^2(0,L)}+\Vert z_x\Vert^2_{L^2(0,L)}+1\right)\int_0^L \Delta^2 dx.
\end{equation}
Thus applying the Gr\"onwall Lemma, we get the existence of $C_{10}=C_{10}(L)>0$ such that
\begin{equation}
\label{unicity-2}
\int_0^L (z(t,x)-y(t,x))^2dx\leq e^{C_{10}\left(1+\Vert y\Vert^2_{L^2(0,T;H^1(0,L))}+\Vert z\Vert^2_{L^2(0,T;H^1(0,L))}\right)}\int_0^L (z_0(x)-y_0(x))^2dx.
\end{equation}
With the use \eqref{unicity-1} and \eqref{unicity-2}, it concludes the proof of Lemma \ref{uniqueness-coron-crepeau}. 
\end{proof}

We aim at removing the smallness condition given by $T^\prime$ in Lemma \ref{local-wp}, following \cite{chapouly2009global}. Since we have the local well-posedness, we only need to prove the following a priori estimate for any mild solution to (\ref{nlkdv_sat}). 

\begin{lemma}
\label{global-estimation}
For given $T>0$, there exists $G:=G(T)>0$ such that for any $y_0\in L^2(0,L)$, for any $0< T^\prime\leq T$ and for any mild solution $y\in \mathcal{B}(T^\prime)$ to (\ref{nlkdv_sat}), it holds
\begin{equation}
\label{glob-estim-chapouly}
\Vert y\Vert_{\mathcal{B}(T^\prime)}\leq G\Vert y_0\Vert_{L^2(0,L)},
\end{equation}
and
\begin{equation}
\label{l2-dissipativity}
\Vert y\Vert_{L^2(0,L)}\leq \Vert y_0\Vert_{L^2(0,L)}.
\end{equation}
\end{lemma}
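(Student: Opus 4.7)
The plan is to establish both inequalities via the two classical KdV multiplier estimates, checking that the saturation term has a favorable sign in the first and is easily controlled pointwise in the second. The key structural fact, shared by both saturations, is that $\int_0^L y(x)\sat(ay)(x)\,dx \geq 0$: for $\satl$ this is immediate from $s\,\texttt{sat}(s) \geq 0$ combined with $a \geq 0$; for $\sath$ one writes $\sath(ay) = \lambda\,ay$ with $\lambda \in [0,1]$ a nonnegative scalar depending on $\|ay\|_{L^2}$, so the integrand is $\lambda a(x) y(x)^2 \geq 0$. Testing \eqref{nlkdv_sat} against $y$ and integrating by parts, the transport term drops by the boundary conditions, $\int_0^L y^2 y_x\,dx = 0$, and $\int_0^L y\,y_{xxx}\,dx = \tfrac{1}{2} y_x(t,0)^2$, yielding
\begin{equation*}
\tfrac{1}{2}\tfrac{d}{dt}\|y(t,\cdot)\|_{L^2(0,L)}^2 + \tfrac{1}{2}y_x(t,0)^2 + \int_0^L y\sat(ay)\,dx = 0,
\end{equation*}
so the $L^2$-norm is non-increasing and \eqref{l2-dissipativity} follows directly.

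For \eqref{glob-estim-chapouly}, I would use the $xy$-multiplier trick following Rosier~\cite{rosier1997kdv} and Chapouly~\cite{chapouly2009global}. Multiplying \eqref{nlkdv_sat} by $xy$ and integrating by parts on $[0,L]$ using the boundary conditions produces, after computing $\int_0^L xy y_x\,dx = -\tfrac12\|y\|_{L^2}^2$, $\int_0^L xy y_{xxx}\,dx = \tfrac32\|y_x\|_{L^2}^2$, and $\int_0^L x y^2 y_x\,dx = -\tfrac13\int_0^L y^3\,dx$, the identity
\begin{equation*}
\tfrac{1}{2}\tfrac{d}{dt}\!\int_0^L x y^2\,dx - \tfrac{1}{2}\!\int_0^L y^2\,dx + \tfrac{3}{2}\!\int_0^L y_x^2\,dx - \tfrac{1}{3}\!\int_0^L y^3\,dx + \int_0^L xy\sat(ay)\,dx = 0.
\end{equation*}
Integrating on $[0,T']$, using \eqref{l2-dissipativity} to control the lower-order $L^2$ contributions and the pointwise bound $|\sat(ay)(x)| \leq a(x)|y(x)| \leq a_1|y(x)|$ (valid for both saturations), reduces \eqref{glob-estim-chapouly} to controlling $\int_0^{T'}\int_0^L |y|^3\,dx\,dt$. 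By the Gagliardo--Nirenberg-type interpolation $\|y\|_{L^\infty(0,L)}^2 \leq 2\|y\|_{L^2}\|y_x\|_{L^2}$ (available since $y(t,0)=0$), one has $\int_0^L |y|^3\,dx \leq \sqrt{2}\|y\|_{L^2}^{5/2}\|y_x\|_{L^2}^{1/2}$, which together with \eqref{l2-dissipativity} and Young's inequality with exponents $(4,4/3)$ absorbs a small multiple of $\int_0^L y_x^2\,dx$ into the left-hand side.

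The main obstacle is converting the resulting inequality into the clean form \eqref{glob-estim-chapouly}: after absorption one obtains a polynomial bound of the shape $\int_0^{T'}\|y_x(t,\cdot)\|_{L^2}^2\,dt \leq C(T,L,a_1)\bigl(\|y_0\|_{L^2}^2 + \|y_0\|_{L^2}^{10/3}\bigr)$, and combined with $\sup_{t\leq T'}\|y(t,\cdot)\|_{L^2} \leq \|y_0\|_{L^2}$ this delivers an estimate $\|y\|_{\mathcal{B}(T')} \leq G(T,\|y_0\|_{L^2})\|y_0\|_{L^2}$ uniform in $T' \in (0,T]$, which is enough to rule out finite-time blow-up and extend the local solution of Lemma~\ref{local-wp} to all of $[0,T]$. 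The saturation term introduces no difficulty beyond the standard Rosier computation, thanks to the $a_1$-pointwise domination and its non-negative sign when paired with $y$.
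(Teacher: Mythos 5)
Your proposal is correct and follows essentially the same route as the paper: the multiplier $y$ gives the $L^2$ dissipativity (the paper likewise drops $\int_0^L y\,\sat(ay)\,dx$ by its sign), and the multiplier $xy$ combined with an interpolation bound on $\int_0^L|y|^3\,dx$ and Young's inequality yields the $L^2(0,T';H^1)$ estimate. The only differences are cosmetic (the paper uses $\sup_x|y|\leq\sqrt{L}\,\Vert y_x\Vert_{L^2}$ and Young with exponents $(2,2)$, and discards $\int_0^L xy\,\sat(ay)\,dx$ by sign rather than bounding it by $a_1$), and your explicit remark that the resulting constant depends on $\Vert y_0\Vert_{L^2}$ through the superquadratic term is consistent with what the paper's own estimate \eqref{alternative-H1} actually delivers.
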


\begin{proof}
Let us fix $0<T^\prime\leq T$. We multiply the first equation in (\ref{nlkdv_sat}) by $y$ and integrate on $(0,L)$. Using the boundary conditions in \eqref{nlkdv_sat}, we get the following estimates
\begin{equation*}
\int_0^L yy_xdx=0,\hspace{0.3cm}\int_0^L yy_{xxx}dx=\frac{1}{2}|y_x(t,0)|^2,\hspace{0.3cm}\int_0^L y^2y_xdx=0.
\end{equation*}
%\begin{equation*}
%\int_0^L yy_{xxx}dx=\frac{1}{2}|y_x(t,0)|^2,
%\end{equation*}
%\begin{equation*}
%\int_0^L y^2y_xdx=0.
%\end{equation*}
Using the fact that $\sat$ is odd, we get that
\begin{equation}
\label{l2-dissipativity1}
\frac{1}{2}\frac{d}{dt}\Vert y(t,.)\Vert^2_{L^2(0,L)} \leq -\frac{1}{2}|y_x(t,0)|^2-\int_0^L y\sat(ay)dx\leq 0
\end{equation}
which implies \eqref{l2-dissipativity}. Moreover, using again \eqref{l2-dissipativity1}, there exists $C_{12}=C_{12}(L)>0$ such that
\begin{equation}
\label{global-wp-1}
\Vert y\Vert_{L^\infty(0,T^\prime;L^2(0,L)}\leq C_{12}\Vert y_0\Vert_{L^2(0,L)}.
\end{equation}
It remains to prove a similar inequality for $\Vert y_x\Vert_{L^2(0,T^\prime;L^2(0,L))}$ to achieve the proof. We multiply (\ref{nlkdv_sat}) by $xy$, integrate on $(0,L)$ and use the following
\begin{equation*}
\int_0^L xyy_xdx=-\frac{1}{2}\Vert y\Vert^2_{L^2(0,L)},
\quad 
\int_0^L yy_{xxx}dx=\frac{3}{2}\Vert y_x\Vert^2_{L^2(0,L)},
\end{equation*} and
\begin{multline}
\label{nl-kdv-diss}
-\int_0^L xy^2y_xdx =\frac{1}{3}\int_0^L y^3(t,x)dx
\leq \frac{1}{3}\sup_{x\in [0,L]}|y(t,x)|\Vert y\Vert^2_{L^\infty(0,T;L^2(0,L))}\\
\leq \frac{\sqrt{L}}{3}\Vert y_x\Vert_{L^2(0,L)}\Vert y\Vert^2_{L^\infty(0,T;L^2(0,L))}
\leq \frac{\sqrt{L}\delta}{6}\Vert y_x\Vert_{L^2(0,L)}+\frac{\sqrt{L}}{6\delta}\Vert y\Vert^4_{L^\infty(0,T;L^2(0,L))}
\end{multline}
where $\delta$ is chosen as $\delta:=\frac{3}{\sqrt{L}}$. In this way, we obtain
\begin{equation}
\frac{1}{2}\frac{d}{dt}\int_0^L |x^{1/2}y(t,.)|^2dx-\frac{1}{2} \int_0^L y^2dx+\frac{3}{2}\int_0^L |y_x|^2dx-\frac{1}{3}\int_0^L |y|^3dx =-\int_0^L x\sat(ay)ydx.
\end{equation}
We get, using \eqref{nl-kdv-diss} and the fact that $\sat$ is odd, that
\begin{equation}
\label{H1-inequality}
\frac{1}{2}\frac{d}{dt}\int_0^L |x^{1/2}y(t,.)|^2dx +\int_0^L |y_x|^2dx \leq \frac{1}{2}\Vert y \Vert^2_{L^\infty(0,T;L^2(0,L))}+\frac{L}{18}\Vert y\Vert^4_{L^\infty(0,T;L^2(0,L))}.
\end{equation}
Using (\ref{global-wp-1}) and \startmodif Gr\"onwall inequality\stopmodif, we get the existence of a positive value $C_{13}=C_{13}(L)>0$ such that
\begin{equation}
\Vert y_x\Vert_{L^2(0,T^\prime;L^2(0,L))}\leq C_{13}\Vert y_0\Vert_{L^2(0,L)},
\end{equation}
which concludes the proof of Lemma \ref{global-estimation}. 
\end{proof}
Using a classical extension argument, Lemmas \ref{local-wp}, \ref{global-estimation} and \ref{uniqueness-coron-crepeau}, for any $T>0$, we can conclude that there exists a unique mild solution in $\mathcal{B}(T)$ to (\ref{nlkdv_sat}). Indeed, with Lemma \ref{local-wp}, we know that there exists $T^\prime\in (0,T)$ such that there exists a unique solution to \eqref{nlkdv_sat} in $\mathcal{B}(T^\prime)$. Moreover, Lemma \ref{global-estimation} allows us to state the existence of \startmodif a mild solution \stopmodif to \eqref{nlkdv_sat} for every $T>0$: since the solution $y$ to \eqref{nlkdv_sat} is bounded by its initial condition for every $T^\prime>0$ belonging to $[0,T]$ as stated in \eqref{l2-dissipativity}, we know that there exists a solution to \eqref{nlkdv_sat} in $\mathcal{B}(T)$. Finally, Lemma \ref{uniqueness-coron-crepeau} implies that there exists a unique mild solution to \eqref{nlkdv_sat} in $\mathcal{B}(T)$. This concludes the proof of Theorem \ref{nl-theorem-wp}.

\startmodif 
\begin{remark}
\label{remark-gkdv}
In \cite{rosier2006global}, the following generalized Korteweg-de Vries equation is considered
\begin{equation}
\label{gkdv-rosier}
\left\{
\begin{split}
&y_t+y_x+y_{xxx}+b(y)y_x+ay=0,\\
&y(t,0)=y(t,L)=y_x(t,L)=0,\\
&y(0,x)=y_0(x),
\end{split}
\right.
\end{equation}
where the function $a:[0,L]\rightarrow \mathbb{R}$ satisfies \eqref{gain-control} and where $b:\mathbb{R}\rightarrow \mathbb{R}$ satisfies the following growth condition
\begin{equation}
b(0)=0,\: |b^{(j)}(\mu)|\leq C\left(1+|\mu|^{p-j}\right),\quad \forall \mu\in\mathbb{R},
\end{equation}
for $j=0$ if $1\leq p< 2$ and for $j=0,1,2$ if $p\geq 2$. 

The saturated version of \eqref{gkdv-rosier} is
\begin{equation}
\label{gkdv-saturated}
\left\{
\begin{split}
&y_t+y_x+y_{xxx}+b(y)y_x+\sat(ay)=0,\\
&y(t,0)=y(t,L)=y_x(t,L)=0,\\
&y(0,x)=y_0(x).
\end{split}
\right.
\end{equation}
The strategy followed in \cite{rosier2006global} can be followed easily to prove the same result than Theorem \ref{nl-theorem-wp} for \eqref{gkdv-saturated}. Note that in \cite{rosier2006global}, provided that the initial condition satisfies some compatibility conditions, the well-posedness is proved for a solutions in $C([0,T];H^s(0,L))\cap L^2(0,T;H^{s+1}(0,L))$, where $s\in [0,3]$. The authors proved this result by looking at $v=y_t$ which solves an equation equivalent to \eqref{gkdv-rosier}. In our case, it seems harder to prove such a result. Since the saturation operator introduces some non-smoothness, $v=y_t$ does not solve an equation equivalent to \eqref{gkdv-saturated}. 
\end{remark}
\stopmodif
\section{Global asymptotic stability}

\label{sec_stab}

Let us begin by introducing the following definition.
\begin{definition}
System (\ref{nlkdv_sat}) is said to be \em semi-globally exponentially stable \em in $L^2(0,L)$ if for any $r>0$ there exists two constants $K:=K(r)>0$ and $\mu:=\mu(r)>0$ such that for any $y_0\in L^2(0,L)$ such that $\Vert y_0\Vert_{L^2(0,L)}\leq r$, the mild solution $y=y(t,x)$ to (\ref{nlkdv_sat}) satisfies
\begin{equation}
\label{local_exp_def}
\Vert y(t,.)\Vert_{L^2(0,L)}\leq K\Vert y_0\Vert_{L^2(0,L)}e^{-\mu t},\qquad \forall t\geq 0.
\end{equation}
%If the constant $\mu$ in (\ref{local_exp_def}) is independant of $r$, the system (\ref{nlkdv_sat}) is said to be \em globally uniformly exponentially stable \em in $L^2(0,L)$. 
\end{definition}

Following \cite{rosier2006global}, we first show that \eqref{nlkdv_sat} is semi-globally exponentially stable in $L^2(0,L)$. From this result, we will be able to prove the global uniform exponential stability of \eqref{nlkdv_sat}. To do that, we state and prove a technical lemma that allows us to bound the saturation function with a linear function as long as the initial condition is bounded. Then we separate our proof into two cases. The first one deals with the case $\omega=[0,L]$ and $\sat=\sath$, while the second one deals with the case $\omega\subseteq [0,L]$ whatever the saturation is. The tools to tackle these two cases are different. The goal of \startmodif the \stopmodif next three sections is to prove the following result
\begin{proposition}[Semi-global exponential stability]
\label{local_as_stab}
For all $y_0\in L^2(0,L)$ with $\Vert y_0\Vert_{L^2(0,L)}\leq r$, the system (\ref{nlkdv_sat}) is semi-globally exponentially stable in $L^2(0,L)$. 

Moreover, if $\omega=[0,L]$ and $\sat=\sath$, inequality \eqref{local_exp_def} holds with $K=1$ and $\mu$ can  be estimated as done in Theorem \ref{glob_as_stab}.
\end{proposition}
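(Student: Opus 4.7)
My plan is to combine an energy identity for the $L^2$-norm with either a direct Lyapunov bound (in the special case) or an observability/contradiction argument (in general). Multiplying the first line of \eqref{nlkdv_sat} by $y$ and integrating by parts, using the boundary conditions exactly as in the derivation of \eqref{l2-dissipativity1} in Lemma \ref{global-estimation}, yields
\begin{equation*}
\frac{1}{2}\frac{d}{dt}\|y(t,\cdot)\|_{L^2(0,L)}^2 \;=\; -\tfrac{1}{2}|y_x(t,0)|^2 \;-\; \int_0^L y(t,x)\,\sat(ay)(t,x)\,dx.
\end{equation*}
The dissipativity estimate \eqref{l2-dissipativity} already guarantees that any trajectory starting in the ball of radius $r$ stays in that ball for all time, so the task reduces to bounding the nonlinear damping term from below by a multiple of $\|y\|_{L^2}^2$.

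For the sharp estimate (case $\omega=[0,L]$, $\sat=\sath$), I would argue pointwise in $t$ and distinguish two sub-cases. If $\|ay(t,\cdot)\|_{L^2(0,L)}\leq u_0$, the saturation is inactive and $\int_0^L y\,\sath(ay)\,dx = \int_0^L a\,y^2\,dx \geq a_0\|y\|_{L^2(0,L)}^2$. Otherwise, using \eqref{function-saturation} together with $a_0\leq a\leq a_1$ and $\|ay\|_{L^2}\leq a_1\|y\|_{L^2}$,
\begin{equation*}
\int_0^L y\,\sath(ay)\,dx \;=\; \frac{u_0\int_0^L a\,y^2\,dx}{\|ay\|_{L^2(0,L)}} \;\geq\; \frac{u_0\,a_0\,\|y\|_{L^2(0,L)}}{a_1}.
\end{equation*}
Bounding $\|y\|_{L^2}\geq \|y\|_{L^2}^2/r$ (which uses $\|y\|_{L^2}\leq r$) yields $\int_0^L y\sath(ay)\,dx \geq \frac{u_0 a_0}{a_1 r}\|y\|_{L^2}^2$. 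Taking the minimum of the two sub-cases produces exactly $\mu$ as in \eqref{formula-mu}, and Gr\"onwall's lemma applied to $\frac{d}{dt}\|y\|_{L^2}^2 \leq -2\mu\|y\|_{L^2}^2$ delivers \eqref{local_as_stab_estim} with $K=1$.

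For the general case (arbitrary non-empty open $\omega$ and either saturation) the pointwise bound above is no longer available, and I would follow the compactness/uniqueness method of \cite{rosier2006global,perla-vasconcellos-zuazua}. The target is an observability inequality: for every $r>0$ there exist $T=T(r)>0$ and $C=C(T,r)>0$ such that every mild solution with $\|y_0\|_{L^2}\leq r$ satisfies
\begin{equation*}
\|y_0\|_{L^2(0,L)}^2 \;\leq\; C\int_0^T\left(|y_x(t,0)|^2 + \int_0^L y(t,x)\,\sat(ay)(t,x)\,dx\right)dt.
\end{equation*}
Combining this inequality with the energy identity integrated on $[0,T]$ gives $\|y(T)\|_{L^2}^2 \leq (1-C^{-1})\|y_0\|_{L^2}^2$, and iterating on intervals $[kT,(k+1)T]$ yields \eqref{local_exp_def}.

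The main obstacle is precisely the proof of this observability inequality, which I would tackle by contradiction. Assume a sequence of data $y_0^n$ with $\alpha_n := \|y_0^n\|_{L^2}\leq r$ whose solutions $y^n$ make the right-hand side vanish while the left-hand side stays bounded below. The rescaled functions $z^n := y^n/\alpha_n$ satisfy a perturbed KdV equation in which the quadratic term contributes $\alpha_n z^n z^n_x$ (hence tends to $0$ in $L^1(0,T;L^2(0,L))$ by Proposition \ref{proposition-reg-rosier}) and the saturated term remains globally Lipschitz with constant $3$ by Lemma \ref{lipschitz-satl2}. The uniform $\mathcal{B}(T)$ bounds of Lemma \ref{global-estimation} together with the Aubin-Lions compactness lemma provide a subsequence converging strongly in $L^2(0,T;L^2(0,L))$ to a limit $z$ that vanishes on $(0,T)\times\omega$, has $z_x(\cdot,0)=0$, and solves a limiting (linear) KdV equation. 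The Holmgren/unique continuation property for KdV established in \cite{rosier1997kdv,rosier2006global} then forces $z\equiv 0$, contradicting $\|z^n(0,\cdot)\|_{L^2}=1$. Carefully passing to the limit in the saturated nonlinearity is the most delicate step, and it is what makes the constants $K$ and $\mu$ depend on $r$, producing the semi-global (rather than truly global) character of the estimate.
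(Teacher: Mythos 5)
Your first case ($\omega=[0,L]$, $\sat=\sath$) is correct and is essentially the paper's argument: your two sub-cases are exactly the integral form of the paper's pointwise sector condition (Lemma \ref{sat-l2-local}(i)), and you recover the same $\mu=\min\{a_0,u_0a_0/(ra_1)\}$ with $K=1$ via Gr\"onwall. The overall architecture of your second case --- observability inequality proved by compactness--uniqueness, then iteration over intervals $[kT,(k+1)T]$ --- is also the paper's.

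However, there is a genuine gap in the contradiction argument. You assert that after rescaling the quadratic term contributes $\alpha_n z^n z^n_x$ and hence tends to $0$, so that the limit equation is linear and Holmgren-type unique continuation applies. This is false: the normalizing factor (whether $\|y_0^n\|_{L^2}$ as you chose, or $\lambda^n=\|y^n\|_{L^2(0,T;L^2)}$ as in the paper) is bounded above by a constant depending on $r$ but need not tend to zero --- indeed in your own setup the left-hand side is bounded below, so $\alpha_n$ is bounded away from $0$. One only gets $\lambda^n\to\lambda\geq 0$ along a subsequence, and the limit $v$ solves the \emph{nonlinear} equation $v_t+v_x+v_{xxx}+\lambda vv_x=0$. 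This forces two extra steps that your sketch omits: a regularity bootstrap (the paper considers $u=v_t$, invokes \cite[Lemma 3.2]{pazoto2005localizeddamping} to get $u(0,\cdot)\in L^2(0,L)$, and deduces $v\in L^2(0,T;H^3(0,L))$) so that the Saut--Scheurer unique continuation theorem (Theorem \ref{theorem-saut}) for the nonlinear KdV can be applied; Holmgren for the linear equation does not suffice. Separately, for $\sat=\satl$ the sector condition of Lemma \ref{sat-l2-local}(ii) requires a pointwise bound $|y(t,x)|\leq r$, which is not available uniformly in $t$; the paper handles this by partitioning $[0,T]$ into the sets $\Omega_i=\{t:\sup_x|y(t,x)|>i\}$ and their complements, using a Chebyshev estimate $\nu(\Omega_i)\leq L\beta/i^2$ to conclude that $v$ vanishes on $\omega$ for a.e.\ $t$. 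Your sketch treats both saturations identically and misses this step.
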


\subsection{Technical Lemma}

Before starting the proof of the Proposition \ref{local_as_stab}, let us state and prove the following lemma.
\begin{lemma}[Sector Condition]
\label{sat-l2-local}
Let $r$ be a positive value, a function $a:[0,L]\rightarrow \mathbb{R}$ satisfying \eqref{gain-control} and $k(r)$ defined by
 \begin{equation}
k(r)=\min\left\{\frac{u_0}{a_1 r},1 \right\}.
\end{equation}
\begin{itemize}
\item[(i)] Given $\sat=\sath$ and $s\in L^2(0,L)$ such that $\Vert s\Vert_{L^2(0,L)}\leq r$, we have
\begin{equation}
\Big( \sath(a(x)s(x))-k(r) a(x)s(x)\Big)s(x)\geq 0,\quad \forall x\in [0,L],
\end{equation}

\item[(ii)] Given $\sat=\satl$ and $s\in L^\infty(0,L)$ such that, for all $x\in [0,L]$, $|s(x)|\leq r$, we have
\begin{equation}
\Big(\satl(a(x)s(x))- k(r) a(x)s(x)\Big)s(x)\geq 0,\quad \forall x\in [0,L].
\end{equation}
\end{itemize}
\end{lemma}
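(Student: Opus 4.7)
The plan is to handle each of the two items separately by a case analysis on whether the saturation is active at the point/function under consideration. The key quantitative input in both cases is the choice $k(r)=\min\{u_0/(a_1 r),1\}$ together with the bound $a(x)\le a_1$ from \eqref{gain-control}.

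For item (i), I first note that $\|as\|_{L^2(0,L)}\le a_1\|s\|_{L^2(0,L)}\le a_1 r$. I then split into two cases according to the definition \eqref{function-saturation}. If $\|as\|_{L^2(0,L)}\le u_0$, the saturation is inactive, $\sath(as)(x)=a(x)s(x)$, and since $0\le k(r)\le 1$ and $a(x)\ge 0$ one gets $(\sath(as)(x)-k(r)a(x)s(x))s(x)=(1-k(r))a(x)s(x)^2\ge 0$. If instead $\|as\|_{L^2(0,L)}>u_0$, then $\sath(as)(x)=a(x)s(x)u_0/\|as\|_{L^2(0,L)}$, so the left-hand side equals $\bigl(u_0/\|as\|_{L^2(0,L)}-k(r)\bigr)a(x)s(x)^2$; using the above norm bound, $u_0/\|as\|_{L^2(0,L)}\ge u_0/(a_1 r)\ge k(r)$, which gives the required sign.

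For item (ii), I use that $\satl$ acts pointwise through the scalar saturation function $\texttt{sat}$. For each fixed $x$, I split according to whether $|a(x)s(x)|\le u_0$ or $|a(x)s(x)|>u_0$. In the first case $\satl(as)(x)=a(x)s(x)$ and the argument is identical to the inactive case of (i). In the second case, since $a(x)\ge 0$, $\satl(as)(x)=\operatorname{sgn}(s(x))u_0$, so the quantity to control becomes $u_0|s(x)|-k(r)a(x)s(x)^2=|s(x)|\bigl(u_0-k(r)a(x)|s(x)|\bigr)$. The bounds $k(r)\le u_0/(a_1 r)$, $a(x)\le a_1$, and $|s(x)|\le r$ yield $k(r)a(x)|s(x)|\le u_0$, hence the bracket is nonnegative and the whole expression is nonnegative.

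There is no real obstacle here: the proof is essentially an elementary case distinction. The only thing one must be careful with is that in item (i) the saturation threshold is a global quantity (the $L^2$ norm of $as$), whereas in item (ii) it is pointwise in $x$; the choice of $k(r)$ absorbs both possibilities uniformly thanks to the uniform bound $a(x)\le a_1$.
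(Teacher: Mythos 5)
Your proof is correct and follows essentially the same route as the paper: a case split on whether the saturation is active, using $\|as\|_{L^2}\le a_1 r$ (resp. $a(x)|s(x)|\le a_1 r$) together with $k(r)\le u_0/(a_1 r)$ to get the sign. The only cosmetic difference is that in item (ii) you treat both signs at once via $\operatorname{sgn}(s(x))u_0$, whereas the paper invokes symmetry and handles only the positive branch.
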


\begin{proof}
(i) We first prove item (i) of Lemma \ref{sat-l2-local}. Two cases may occur
\begin{itemize}
\item[1.] $\Vert as\Vert_{L^2(0,L)}\geq u_0$;
\item[2.] $\Vert as\Vert_{L^2(0,L)}\leq u_0$.
\end{itemize}
The first case implies that, for all $x\in [0,L]$
$$
\sath(a(x)s(x))=\frac{a(x)s(x)}{\Vert as\Vert_{L^2(0,L)}}u_0.
$$
Thus, for all $x\in [0,L]$,
\begin{equation*}
\Big( \sath(a(x)s(x))-k(r) a(x)s(x)\Big)s(x)=a(x)s(x)^2\left(\frac{u_0}{\Vert as\Vert_{L^2(0,L)}}-k(r)\right).
\end{equation*}
Since
$$
\frac{u_0}{\Vert as\Vert_{L^2(0,L)}} \geq \frac{u_0}{a_1\Vert s\Vert_{L^2(0,L)}} \geq \frac{u_0}{a_1 r} \geq k(r),
$$
we obtain
$$
\Big(\sath(a(x)s(x))-k(r) a(x)s(x)\Big)s(x)\geq 0.
$$
Now, let us consider the case $\Vert as\Vert_{L^2(0,L)}\leq u_0$. We have, for all $x\in [0,L]$,
$$
\sath(a(x)s(x))=a(x)s(x),
$$
and then, for all $x\in [0,L]$,
\begin{equation*}
(\sath(a(x)s(x))-k(r) a(x)s(x))s(x)=a(x)s(x)^2(1-k(r))\geq 0.
\end{equation*}

(ii) We now deal with item (ii) of Lemma \ref{sat-l2-local}.

Let us pick $x\in [0,L]$ and consider the two following cases
\begin{itemize}
\item[1.] $|a(x)s(x)|\geq u_0$;
\item[2.] $|a(x)s(x)|\leq u_0.$
\end{itemize}
The first case implies either $a(x)s(x)\geq u_0$ or $a(x)s(x)\leq -u_0$.

Since these two possibilities are symmetric, we just deal with the case $a(x)\geq u_0$. We have
\begin{equation*}
\satl(a(x)s(x))=u_0,
\end{equation*}
and then
\begin{multline*}
\Big(\satl(a(x)s(x))-k(r)a(x)s(x)\Big)s(x)=u_0s(x)-k(r)a(x)s^2(x)\\
\geq \Big(u_0-k(r)a(x)r\Big)s(x)
\geq \left(u_0-\frac{u_0}{a_1 r}a(x)r\right)s(x)
\geq 0.
\end{multline*}
The second case implies that $\satl(a(x)s(x))=a(x)s(x),$
and then $
\Big(\satl(a(x)s(x)-k(r)a(x)s(x)\Big)s(x)=\Big(1-k(r)\Big)a(x)^2s(x)^2\geq 0.$
Thus it concludes the proof of the second item of the Lemma \ref{sat-l2-local}. 
\end{proof}
\subsection{Proof of Proposition \ref{local_as_stab} when $\omega=[0,L]$ and $\sat=\sath$}
\label{dead-zone-technique}

Now we are able to prove \startmodif Proposition \ref{local_as_stab} \stopmodif when $\omega=[0,L]$ and $\sat=\sath$. Let $r>0$ and $y_0\in L^2(0,L)$ be such that $\Vert y_0\Vert_{L^2(0,L)}\leq r$.

Multiplying \eqref{nlkdv_sat} by $y$, integrating with respect to $x$ on $(0,L)$ yields
\begin{equation}
\label{lyap-dz}
\frac{1}{2}\frac{d}{dt}\int_0^L |y(t,x)|^2dx\leq -\int_0^L \sath(ay(t,x))y(t,x)dx.
\end{equation}

Note that from \eqref{l2-dissipativity}, we get
\begin{equation}
\label{dissipativity-nl}
\Vert y\Vert_{L^2(0,L)} \leq \Vert y_0\Vert_{L^2(0,L)} \leq r.
\end{equation}
Thus, using Lemma \ref{sat-l2-local} and \eqref{lyap-dz}, it implies that
\begin{equation}
\frac{1}{2}\frac{d}{dt}\int_0^L |y(t,x)|^2dx\leq -\int_0^L k(r) a_0 |y(t,x)|^2dx.
\end{equation}
Applying the Gr\"onwall lemma leads to
\begin{equation}
\Vert y(t,.)\Vert_{L^2(0,L)}\leq e^{-\mu t}\Vert y_0\Vert_{L^2(0,L)}
\end{equation}
where $\mu$ is defined in the statement of Theorem \ref{glob_as_stab}. It concludes the proof of Proposition \ref{local_as_stab} when $\omega=[0,L]$ and when $\sat=\sath$. 

\begin{remark}
%The use of the $\sath$ instead of $\satl$ is due to the sector condition \eqref{sector-condition} introduced in Lemma \ref{lemma-dz}. Indeed, in the case where $\sat=\satl$, for any $b\in \mathbb{R}$ and for all $y\in\mathbb{R}$, such that $(a_0-b)|y|\leq u_0$, then the sector condition \eqref{sector-condition-dz} holds. However, without better regularity results, the dissipativity argument used in order to obtain \eqref{dissipativity-argument} in the proof of Lemma \ref{local_as_stab} cannot be used. Using $\sat=\satl$ and the dead-zone technique would force us to consider initial conditions belonging to $L^\infty$. Therefore, second item of Lemma \ref{local_as_stab} with $\sat=\satl$ is still an open question.  
The constant $\mu$ depends on $u_0$, $r$ and $a_0$. Thus, although we have proven an exponential stability, the rapid stabilization is still an open question. Moreover, in the case $a(x)=a_0=a_1$ for all $x\in [0,L]$, which is the case where the gain is constant, we obtain that 
$$
\mu=\min\left\{a_0,\frac{u_0}{r}\right\}.
$$
\end{remark}

\subsection{Proof of Proposition \ref{local_as_stab} when $\omega \subseteq [0,L]$}

%\section{Alternative proof of Theorem \ref{glob_as_stab}}

In this section, we have $\sat=\sath$ or $\sat=\satl$. We follow the strategy of \cite{rosier2006global} and \cite{cerpa2013control}. We use a  \startmodif contradiction argument\stopmodif. It is based on the following \startmodif unique continuation result\stopmodif.
\begin{theorem}[\cite{saut1987unique}]
\label{theorem-saut}
Let $u\in L^2(0,T;H^3(0,L))$ be a solution of
$$
u_t+u_x+u_{xxx}+uu_x=0
$$
such that
$$u(t,x)=0,\: \forall t\in (t_1,t_2),\: \forall x\in\omega,$$
with $\omega$ an open nonempty subset of $(0,L)$. Then
$$
u(t,x)=0,\: \forall t\in (t_1,t_2),\:\forall x\in (0,L).
$$
\end{theorem}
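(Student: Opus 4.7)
The approach I would take is to use Carleman estimates for the KdV operator $P = \partial_t + \partial_x + \partial_x^3$, treating the nonlinearity $uu_x$ as a lower-order perturbation. First I would reduce to a lateral unique continuation statement: since $u$ vanishes on the open cylinder $(t_1,t_2) \times \omega$, differentiating in $x$ inside $\omega$ shows that $u$, $u_x$, and $u_{xx}$ all vanish there, and the equation itself forces $u_{xxx} = -u_t - u_x - uu_x = 0$ as well. Fixing any $x_0 \in \omega$ and writing $I := (t_1, t_2)$, the task becomes: a KdV solution with $u(t,x_0) = u_x(t,x_0) = u_{xx}(t,x_0) = 0$ for all $t \in I$ must vanish on $I \times (0,L)$.

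Next I would establish a Carleman inequality for $P$ with an exponential weight $e^{2\tau \phi(x)}$, $\tau \gg 1$, of the schematic form
\begin{equation*}
\tau^{5} \int e^{2\tau \phi} |u|^2 + \tau^{3} \int e^{2\tau \phi} |u_x|^2 + \tau \int e^{2\tau \phi} |u_{xx}|^2 \leq C \int e^{2\tau \phi} |Pu|^2,
\end{equation*}
valid for smooth $u$ compactly supported in $x$ away from the endpoints of the interval of interest. This would be proved by conjugating $P$ with $e^{\tau \phi}$ and isolating the positive commutator between the self-adjoint and skew-adjoint parts of the conjugated operator; $\phi$ must satisfy a pseudoconvexity-type condition adapted to the Airy-type symbol $i(\xi - \xi^3)$, which in practice forces $\phi$ to be strictly convex in $x$ with sufficient growth.

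Applying the estimate to the given KdV solution, cut off in $x$ to isolate a region on one side of $x_0$, the nonlinear right-hand side $|Pu|^2 = |uu_x|^2$ is controlled via the Sobolev embedding $H^3(0,L) \hookrightarrow L^\infty(0,L)$: one bounds $\int e^{2\tau \phi} |uu_x|^2 \leq \|u\|_{L^\infty}^2 \int e^{2\tau \phi} |u_x|^2$, which is absorbed by the $\tau^3$ term on the left for $\tau$ large. The cutoff together with the vanishing lateral data at $x_0$ leaves only boundary contributions supported where $\phi$ is small, so letting $\tau \to \infty$ forces $u \equiv 0$ on $I \times \{\phi > 0\}$. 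Chaining a pair of weights whose positivity regions cover $(0,L) \setminus \{x_0\}$, one on each side of $x_0$, then yields $u \equiv 0$ on $I \times (0,L)$.

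The main obstacle is the construction of the Carleman weight. For a third-order dispersive operator the standard H\"ormander pseudoconvexity reduces to a sign condition on a fairly involved commutator, and exhibiting a scalar $\phi(x)$ on $(0,L)$ that satisfies it while being compatible with the boundary conditions $y(t,0) = y(t,L) = y_x(t,L) = 0$ inherited from the KdV problem is the delicate step. A secondary technical point is the cutoff/density argument that transfers the estimate from $C_c^\infty$ test functions to the a priori $L^2(I;H^3)$ solution, in particular near the endpoints $0$ and $L$ where boundary traces must be handled carefully.
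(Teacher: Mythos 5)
This statement is not proved in the paper at all: it is imported verbatim as a known unique continuation result, with the proof delegated entirely to the cited reference \cite{saut1987unique} (Saut--Scheurer). Your Carleman-estimate strategy is in fact the route taken in that reference, so you are reconstructing the standard proof rather than diverging from the paper.

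That said, as written your text is a roadmap and not a proof, and the two places where you defer the work are precisely where all the content lies. First, the pseudoconvexity computation for the conjugated operator $e^{\tau\phi}(\partial_t+\partial_x+\partial_x^3)e^{-\tau\phi}$ is the theorem; until a concrete $\phi$ is exhibited and the commutator positivity verified, nothing has been established (Saut--Scheurer carry this out for general operators $\partial_t+\sum a_\alpha D_x^\alpha$ with weights convex in $x$). Second, your weight depends on $x$ only, so when you localize you must also cut off in time near $t_1$ and $t_2$; the resulting error terms $\chi'(t)u$ are supported on full $x$-slices where $\phi$ is \emph{not} small, hence they cannot be absorbed by taking $\tau$ large in the way you describe for the spatial cutoff errors. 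One must either exploit the skew-adjointness of $\partial_t$ to give the time-boundary terms a sign, or accept a conclusion on a slightly smaller time interval and exhaust $(t_1,t_2)$; your sketch is silent on this, and it is a genuine gap rather than a routine density argument. A smaller quibble: the absorption of $\Vert u\Vert_{L^\infty}^2\int e^{2\tau\phi}|u_x|^2$ by the $\tau^3|u_x|^2$ term requires $\Vert u\Vert_{L^\infty}$ to be controlled uniformly in $t$, which for $u\in L^2(0,T;H^3)$ holds only after restricting to a.e. $t$ or upgrading the regularity in time via the equation; this should be stated.
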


Moreover, the following lemma will be used.
\begin{lemma}[Aubin-Lions Lemma, \cite{simon1987compact}, Corollary 4]
\label{aubin-lions}
Let $X_0\subset X\subset X_1$ be three Banach spaces with $X_0$, $X_1$ reflexive spaces. Suppose that $X_0$ is compactly embedded in $X$ and $X$ is continuously embedded in $X_1$. Then $\lbrace h\in L^p(0,T;X_0)/\:  \dot{h}\in L^q(0,T;X_1)\rbrace$ embeds compactly in $L^p(0,T;X)$ for any $1<p,\: q<\infty$. 
\end{lemma}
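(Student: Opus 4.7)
The plan is to prove a compactness result in the Bochner space $L^p(0,T;X)$ by combining two ingredients: (i) a pointwise interpolation inequality of Ehrling type, which reduces strong compactness in $L^p(0,T;X)$ to strong compactness in the weaker space $L^p(0,T;X_1)$; and (ii) a uniform time-translation estimate coming from the bound on $\dot h$ in $L^q(0,T;X_1)$, which together with compactness of the embedding $X_0\hookrightarrow X$ gives the needed compactness in $L^p(0,T;X_1)$ via a Kolmogorov--Riesz--Fréchet criterion.

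\textbf{Step 1 (Ehrling interpolation).} First I would establish that for every $\varepsilon>0$ there exists $C_\varepsilon>0$ with
\[
\|v\|_X \le \varepsilon\|v\|_{X_0} + C_\varepsilon \|v\|_{X_1}, \qquad \forall v\in X_0.
\]
The proof is by contradiction: if the inequality failed, one would obtain $v_n\in X_0$ with $\|v_n\|_{X_0}=1$ and $\|v_n\|_X > \varepsilon_0 + n\|v_n\|_{X_1}$. The compact embedding $X_0\hookrightarrow X$ produces a subsequence converging strongly in $X$, while the forced bound $\|v_n\|_{X_1}\to 0$ together with continuity of $X\hookrightarrow X_1$ identifies the limit as $0$, contradicting $\|v_n\|_X > \varepsilon_0$.

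\textbf{Step 2 (Weak compactness and reduction).} Denote $W := \{h\in L^p(0,T;X_0) : \dot h \in L^q(0,T;X_1)\}$, and let $\{h_n\}$ be a bounded sequence in $W$. Since $X_0$ and $X_1$ are reflexive and $1<p,q<\infty$, the Bochner spaces $L^p(0,T;X_0)$ and $L^q(0,T;X_1)$ are reflexive. Up to extraction, $h_n\rightharpoonup h$ in $L^p(0,T;X_0)$ and $\dot h_n\rightharpoonup \dot h$ in $L^q(0,T;X_1)$. By translation, one may assume $h=0$. Applying Step 1 to $h_n(t)$ pointwise in $t$ and then taking the $L^p$ norm in time gives
\[
\|h_n\|_{L^p(0,T;X)} \le \varepsilon\|h_n\|_{L^p(0,T;X_0)} + C_\varepsilon\|h_n\|_{L^p(0,T;X_1)}.
\]
The first term is $O(\varepsilon)$ uniformly in $n$, so it remains to show $h_n\to 0$ strongly in $L^p(0,T;X_1)$.

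\textbf{Step 3 (Strong compactness in $L^p(0,T;X_1)$).} The key estimate uses the absolute continuity afforded by $\dot h_n\in L^q(0,T;X_1)$: for $0<s<T$ and $t\in[0,T-s]$, H\"older's inequality yields
\[
\|h_n(t+s)-h_n(t)\|_{X_1} \le \int_t^{t+s}\|\dot h_n(r)\|_{X_1}\,dr \le s^{1/q'}\|\dot h_n\|_{L^q(0,T;X_1)},
\]
so the $L^p$--norm in $t$ of $h_n(\cdot+s)-h_n(\cdot)$ tends to $0$ as $s\to 0$ uniformly in $n$. To handle the pointwise direction, introduce time-averages $h_n^\delta(t):=\frac1\delta\int_t^{t+\delta}h_n(r)\,dr$, which lie in $X_0$ with a uniform bound $\|h_n^\delta(t)\|_{X_0}\le \delta^{-1/p'}\|h_n\|_{L^p(0,T;X_0)}$. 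Since $X_0\hookrightarrow X \hookrightarrow X_1$ with the first embedding compact, the set $\{h_n^\delta(t):n\in\mathbb{N}\}$ is relatively compact in $X_1$ for each fixed $t,\delta$; and $\|h_n-h_n^\delta\|_{L^p(0,T;X_1)}$ is small uniformly in $n$ by the translation estimate. Combining these, the Kolmogorov--Riesz--Fr\'echet criterion for Bochner-valued $L^p$ spaces yields a subsequence of $\{h_n\}$ converging strongly in $L^p(0,T;X_1)$; the weak convergence $h_n\rightharpoonup 0$ then forces the limit to be $0$, concluding the proof.

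\textbf{Main obstacle.} The technical heart is Step 3, and specifically the passage from pointwise relative compactness of the averaged values $\{h_n^\delta(t)\}$ in $X_1$ to relative compactness of $\{h_n\}$ in the Bochner space $L^p(0,T;X_1)$. The delicate point is that individual pointwise values $h_n(t)$ are not canonically defined as elements of $X_0$, so one must work entirely with the averaged quantities and control the error $\|h_n-h_n^\delta\|_{L^p(0,T;X_1)}$ uniformly in $n$; this is exactly what the time-translation inequality derived from $\dot h_n\in L^q(0,T;X_1)$ achieves, tying together the two hypotheses of the lemma.
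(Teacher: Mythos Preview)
The paper does not supply a proof of this lemma; it is quoted as an external result (Simon, \emph{Compact sets in the space $L^p(0,T;B)$}, Corollary 4) and used as a black box in the contradiction argument for Proposition~\ref{local_as_stab}. So there is no ``paper's proof'' to compare against.

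That said, your sketch is the classical route to Aubin--Lions and is essentially sound: Ehrling's inequality (Step~1) reduces the problem to strong convergence in $L^p(0,T;X_1)$, reflexivity gives weak limits (Step~2), and the derivative bound yields uniform time-equicontinuity in $X_1$ (Step~3). The one place to be careful is the invocation of a Kolmogorov--Riesz--Fr\'echet criterion in Step~3: for Bochner spaces this criterion is not quite the ``pointwise relative compactness $+$ equicontinuity'' statement you suggest. Simon's proof instead shows directly that the family $\{h_n^\delta\}$ (your local averages) converges along a subsequence in $L^p(0,T;X_1)$---using that for fixed $\delta$ the averages are equicontinuous in $t$ with values in a compact set of $X_1$, hence Arzel\`a--Ascoli applies in $C([0,T-\delta];X_1)$---and then lets $\delta\to 0$ via a diagonal argument combined with your uniform translation estimate. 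Your identification of the ``main obstacle'' is exactly right, and once this point is handled via Arzel\`a--Ascoli on the mollified sequence rather than an abstract KRF statement, the argument closes.
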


Let us now start the proof of Proposition \ref{local_as_stab}. Let $r>0$ and $y_0\in L^2(0,L)$ be such that
\begin{equation}
\label{alternative-r}
\Vert y_0\Vert_{L^2(0,L)}\leq r.
\end{equation}
\startmodif As in the proof of Lemma \ref{global-estimation}, with multiplier techniques applied to \eqref{nlkdv_sat}, we obtain \stopmodif
\begin{equation}
\label{alternative1}
\Vert y(t,.)\Vert^2_{L^2(0,L)}=\Vert y_0\Vert^2_{L^2(0,L)}-\int_0^t |y_x(\sigma,0)|^2d\sigma -2\int_0^t\int_0^L \sat(ay)ydxd\sigma,\quad \forall t\in [0,T]
\end{equation}
and
\begin{equation}
\label{alternative-H1}
\Vert y\Vert^2_{L^2(0,T;H^1(0,L))}\leq \frac{8T+2L}{3}\Vert y_0\Vert^2_{L^2(0,L)}+\frac{TC}{27}\Vert y_0\Vert^4_{L^2(0,L)}.
\end{equation}
Moreover, multiplying \eqref{nlkdv_sat} by $(T-t)y$, we obtain after performing some integrations by parts
\begin{equation}
\label{rosier.3.13}
T\Vert y_0\Vert^2_{L^2(0,L)}\leq \int_0^T \int_0^L |y(t,x)|^2dxdt+\int_0^T (T-t)|y_x(t,0)|^2dt+2\int_0^T (T-t) \int_0^L \sat(ay)ydxdt.
\end{equation}

Note that, since $\sat$ is an odd function, \eqref{alternative1} implies that, for all $t\in [0,T]$
\begin{equation}
\label{dissipativity-alternative}
\Vert y(t,.)\Vert^2_{L^2(0,L)}\leq \Vert y_0\Vert^2_{L^2(0,L)}.
\end{equation}

From now on, we will separate the proof into two cases: $\sat=\sath$ and $\sat=\satl$.\\

\textbf{Case 1:} $\sat=\sath$. 

Using \eqref{l2-dissipativity}, we have, 
$$
\Vert y(T,.)\Vert_{L^2(0,L)}\leq r,
$$ 
and we can apply the first item of Lemma \ref{sat-l2-local}. The inequality \eqref{alternative1} becomes
\begin{equation}
\Vert y(T,.)\Vert^2_{L^2(0,L)}\leq \Vert y_0\Vert^2_{L^2(0,L)}-\int_0^T |y_x(t,0)|^2dt-2\int_0^T\int_0^L ak(r)y^2 dxdt.
\end{equation}

Let us state a claim that will be useful in the following.
\begin{claim}
\label{claim-statement}
For any $T>0$ and any $r>0$ there exists a positive constant $C_{14}=C_{14}(T,r)$ such that for any solution $y$ to \eqref{nlkdv_sat} with an initial condition $y_0\in L^2(0,L)$ such that $\Vert y_0\Vert_{L^2(0,L)}\leq r$, it holds that
\begin{equation}
\label{claim}
\Vert y_0\Vert^2_{L^2(0,L)}\leq C_{14}\left(\int_0^T |y_x(t,0)|^2dt+2\int_0^T\int_0^L k(r)a|y(t,x)|^2dxdt\right).
\end{equation}
\end{claim}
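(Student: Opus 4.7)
The plan is to argue by contradiction along the lines of \cite{rosier2006global}, combining a compactness extraction with a unique continuation argument. Suppose the claim fails: then for each $n\in\mathbb{N}$ there would exist $y_0^n\in L^2(0,L)$ with $\lambda_n:=\Vert y_0^n\Vert_{L^2(0,L)}\in(0,r]$ whose corresponding mild solution $y^n$ of \eqref{nlkdv_sat} satisfies
\[
\int_0^T |y^n_x(t,0)|^2\,dt + 2\int_0^T\int_0^L k(r)a|y^n|^2\,dx\,dt<\frac{\lambda_n^2}{n}.
\]
Normalizing via $u^n:=y^n/\lambda_n$, one has $\Vert u^n(0,\cdot)\Vert_{L^2(0,L)}=1$ and, by \eqref{dissipativity-alternative} together with \eqref{alternative-H1} and $\lambda_n\leq r$, the sequence $u^n$ is uniformly bounded in $L^\infty(0,T;L^2(0,L))\cap L^2(0,T;H^1(0,L))$; moreover the rescaled observation satisfies $\int_0^T|u^n_x(t,0)|^2\,dt + 2\int_0^T\int_0^L k(r)a|u^n|^2\,dx\,dt<1/n\to 0$.

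Using the equation satisfied by $u^n$ to control $u^n_t$ in a negative Sobolev space, I would then invoke the Aubin--Lions lemma (Lemma \ref{aubin-lions}) to extract a subsequence (still denoted $u^n$) converging strongly in $L^2(0,T;L^2(0,L))$ to some $u^\star$, while $\lambda_n\to\lambda^\star\in[0,r]$ up to further extraction. The observation bound forces $u^\star\equiv 0$ on $\omega\times(0,T)$, which in turn kills the saturation term in the limit: combining the 3-Lipschitz bound from Lemma \ref{lipschitz-satl2} (useful if $\lambda^\star>0$) with the fact that for $\lambda^\star=0$ the argument $a\lambda_n u^n$ eventually becomes small enough that $\sat$ acts as the identity, the term $\lambda_n^{-1}\sat(a\lambda_n u^n)$ passes to a limit that vanishes on the support of $a$. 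The limit $u^\star$ therefore solves $u^\star_t+u^\star_x+u^\star_{xxx}+\lambda^\star u^\star u^\star_x=0$ on $(0,L)\times(0,T)$ with $u^\star\equiv 0$ on $\omega\times(0,T)$; rescaling to the standard KdV normalization and upgrading regularity from $L^2(0,T;H^1(0,L))$ to $L^2(0,T;H^3(0,L))$ via the smoothing effect of the KdV semigroup, the unique continuation result of Saut--Scheurer (Theorem \ref{theorem-saut}) forces $u^\star\equiv 0$ on $(0,L)\times(0,T)$.

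To close the contradiction I would specialize \eqref{rosier.3.13} to $y^n$ and divide by $\lambda_n^2$:
\[
T=T\Vert u^n(0,\cdot)\Vert^2_{L^2(0,L)}\leq \int_0^T\!\!\int_0^L |u^n|^2\,dx\,dt + \int_0^T(T-t)|u^n_x(t,0)|^2\,dt + 2\lambda_n^{-1}\!\int_0^T(T-t)\!\int_0^L\sat(a\lambda_n u^n)u^n\,dx\,dt.
\]
The first two terms tend to zero by the strong convergence $u^n\to u^\star\equiv 0$ in $L^2(0,T;L^2(0,L))$ and the observation bound. For the third, the elementary pointwise inequality $\sat(as)s\leq as^2$ (easily verified for both $\sath$ and $\satl$ from their definitions, using $u_0/\Vert as\Vert_{L^2(0,L)}\leq 1$ in the active regime of $\sath$) yields $\lambda_n^{-1}\sat(a\lambda_n u^n)u^n\leq a(u^n)^2$, which vanishes thanks to the observation bound. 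Passing to the limit produces $T\leq 0$, the desired contradiction. The principal difficulty of this scheme is twofold: passing to the limit in the normalized saturation term $\lambda_n^{-1}\sat(a\lambda_n u^n)$, which requires careful case analysis according to whether $\lambda^\star>0$ or $\lambda^\star=0$, and elevating the regularity of the weak limit $u^\star$ from $H^1$ to $H^3$ in the spatial variable in order to fit the hypotheses of Theorem \ref{theorem-saut}.
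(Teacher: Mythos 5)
Your overall scheme is the paper's: argue by contradiction, normalize the offending sequence, use the Aubin--Lions lemma to pass to a strong $L^2(0,T;L^2(0,L))$ limit, kill the limit on $\omega$ via the observation term, and invoke the Saut--Scheurer unique continuation result (Theorem \ref{theorem-saut}). Two of your choices differ harmlessly from the paper. First, you normalize by $\lambda_n=\Vert y_0^n\Vert_{L^2(0,L)}$ and close the contradiction by dividing \eqref{rosier.3.13} by $\lambda_n^2$ to reach $T\leq 0$; the paper instead normalizes by $\Vert y^n\Vert_{L^2(0,T;L^2(0,L))}$, so that the limit $v$ carries $\Vert v\Vert_{L^2(0,T;L^2(0,L))}=1$, which directly contradicts $v\equiv 0$ (the paper uses \eqref{rosier.3.13} earlier, to reduce the Claim to the estimate \eqref{alternative2} on $\int_0^T\int_0^L|y|^2$). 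Both routes work; your pointwise bound $\sat(as)s\leq as^2$ is correct for both saturations and does control the last term of \eqref{rosier.3.13} by the observation quantity. Your case analysis on $\lambda^\star$ for the saturation term is unnecessary: the Lipschitz bound of Lemma \ref{lipschitz-satl2} gives $\Vert\lambda_n^{-1}\sat(a\lambda_n u^n)\Vert_{L^2(0,T;L^2(0,L))}\leq 3\Vert au^n\Vert_{L^2(0,T;L^2(0,L))}$, and $\int_0^T\int_0^L a^2(u^n)^2\,dx\,dt\leq a_1\int_0^T\int_0^L a(u^n)^2\,dx\,dt\to 0$ by the observation bound, so the term vanishes strongly regardless of $\lambda^\star$.

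The genuine gap is the regularity upgrade of the weak limit that you defer to ``the smoothing effect of the KdV semigroup.'' That mechanism does not deliver what you need: Kato-type smoothing gains one spatial derivative from $L^2$ data (this is exactly \eqref{dissipativity-regularity}), so it places the limit in $L^2(0,T;H^1(0,L))$, not in $L^2(0,T;H^3(0,L))$ as required by Theorem \ref{theorem-saut}; iterating it would require $H^2$ initial data, which you do not have. The paper's route is different and is the step that actually carries the weight: set $u:=v_t$, which solves the linearized equation \eqref{kdv-unique-continuation-u} with data only in $H^{-3}(0,L)$, observe that $u$ inherits $u=0$ on $\omega$ and $u_x(\cdot,0)=0$, and apply the hidden-regularity/observability estimate of \cite[Lemma 3.2]{pazoto2005localizeddamping} to conclude $u(0,\cdot)\in L^2(0,L)$, hence $v_t\in\mathcal{B}(T)$; only then does $v_{xxx}=-v_t-v_x-\lambda vv_x\in L^2(0,T;L^2(0,L))$ give $v\in L^2(0,T;H^3(0,L))$. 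Without this (or an equivalent) argument your appeal to unique continuation is not justified, and with it the rest of your proof closes as written.
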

Let us assume Claim \ref{claim-statement} for the time being. Then \eqref{alternative1} implies
\begin{equation}
\Vert y(kT,.)\Vert^2_{L^2(0,L)}\leq \gamma^k\Vert y_0\Vert^2_{L^2(0,L)}\qquad \forall k\geq 0,\: \forall t\geq 0,
\end{equation}
where $\gamma\in (0,1)$. From \eqref{dissipativity-alternative}, we have $\Vert y(t,.)\Vert_{L^2(0,L)}\leq \Vert y(kT,.)\Vert_{L^2(0,L)}$ for $kT\leq t\leq (k+1)T$. Thus we obtain, for all $t\geq 0$,
\begin{equation}
\label{stability-absurd}
\Vert y(t,.)\Vert^2_{L^2(0,L)}\leq \frac{1}{\gamma}\Vert y_0\Vert_{L^2(0,L)}e^{\frac{\log \gamma}{T}t}.
\end{equation}
In order to prove Claim \ref{claim-statement}, since the solution to \eqref{nlkdv_sat} satisfies \eqref{rosier.3.13}, it is sufficient to prove that there exists some constant $C_{15}:=C_{15}(T,L)>0$ such that
\begin{equation}
\label{alternative2}
\int_0^T\int_0^L |y|^2dxdt\leq C_{15}\left(\int_0^T |y_x(t,0)|^2dt+2\int_0^T\int_0^L k(r)ay^2dxdt\right)
\end{equation}
provided that $\Vert y_0\Vert_{L^2(0,L)}\leq r$. We argue by contradiction to prove the existence of such a constant $C_{15}$.

Suppose \eqref{alternative2} fails to be true. Then, there exists a sequence of mild solutions $\lbrace y^n\rbrace_{n\in \mathbb{N}}\subseteq \mathcal{B}(T)$ of \eqref{nlkdv_sat} with
\begin{equation}
\label{alternative-initial-conditions}
\Vert y^n(0,.)\Vert_{L^2(0,L)}\leq r
\end{equation}
and such that
\begin{equation}
\label{alternative-limit}
\lim_{n\rightarrow +\infty}\frac{\Vert y^n\Vert^2_{L^2(0,T;L^2(0,L))}}{\int_0^T |y_x^n(t,0)|^2dt+2\int_0^T\int_0^L k(r)a(y^n)^2dxdt}=+\infty.
\end{equation}
Note that \eqref{alternative-initial-conditions} implies with \eqref{dissipativity-alternative} that
\begin{equation}
\label{alternative3}
\Vert y^n(t,.)\Vert_{L^2(0,L)}\leq r,\quad \forall t\in [0,T].
\end{equation}
Let $\lambda^n:=\Vert y^n\Vert_{L^2(0,T;L^2(0,L))}$ and $v^n(t,x)=\frac{y^n(t,x)}{\lambda^n}$. Notice that $\lbrace \lambda^n\rbrace_{n\in\mathbb{N}}$ is bounded, according to \eqref{alternative3}. \startmodif Hence, there exists a subsequence, that we continue to denote by $\lbrace\lambda^n\rbrace_{n\in\mathbb{N}}$ such that \stopmodif
$$
\lambda^n\rightarrow \lambda\geq 0\quad \text{as $n\rightarrow +\infty$}.
$$
Then $v^n$ fullfills
\begin{equation}
\label{alternative4}
\left\{
\begin{array}{l}
v^n_t+v^n_{x}+v^n_{xxx}+\lambda^n v^nv^n_x+\frac{\sath(a\lambda^n v^n)}{\lambda^n}=0,\\
v^n(t,0)=v^n(t,L)=v^n_x(t,L)=0,\\
\Vert v^n\Vert_{L^2(0,T;L^2(0,L))}=1,
\end{array}
\right.
\end{equation}
and, due to \eqref{alternative-limit}, we obtain
\begin{equation}
\label{alternative-limit-l2}
\int_0^T |v_x^n(t,0)|^2dt+2\int_0^T\int_0^L ak(r)(v^n)^2 dxdt\rightarrow 0\text{ as $n\rightarrow +\infty$.}
\end{equation}

It follows from \eqref{rosier.3.13} that $\lbrace v^n(0,.)\rbrace_{n\in\mathbb{N}}$ is bounded in $L^2(0,L)$. Note also that from \eqref{alternative-H1} $\lbrace v^n\rbrace_{n\in\mathbb{N}}$ is bounded in $L^2(0,T;H^1(0,L))$. Thus we see that $\lbrace v^nv^n_x\rbrace_{n\in\mathbb{N}}$ is a subset of $L^2(0,T;L^1(0,L))$. In fact,
\begin{equation}
\Vert v^nv^n_x\Vert_{L^2(0,T;L^1(0,L))}\leq \Vert v^n\Vert_{C(0,T;L^2(0,L))}\Vert v^n\Vert_{L^2(0,T;H^1(0,L))}.
\end{equation}
%and 
%\begin{equation}
%\left\Vert \frac{\sat(a\lambda^n v^n)}{\lambda^n}\right\Vert_{L^2(0,T;L^1(0,L))}\leq \sqrt{L}\left\Vert\frac{\sat(a\lambda^n v^n)}{\lambda^n}\right\Vert_{L^2(0,T;L^2(0,L))}\leq 3a_1 L\Vert v^n\Vert_{L^2(0,T;H^1(0,L))}.
%\end{equation}
Moreover, we have that $\left\{\frac{\sath(a\lambda^n v^n)}{\lambda^n}\right\}_{n\in\mathbb{N}}$ is a bounded sequence in $L^2(0,T;L^2(0,L))$. Indeed, from Lemma \ref{lipschitz-satl2}
\begin{equation}
\left\Vert \frac{\sath(a\lambda^n v^n)}{\lambda^n}\right\Vert_{L^2(0,T;L^2(0,L))}\leq 3\Vert av^n \Vert_{L^2(0,T;L^2(0,L))}\leq 3a_1\sqrt{L}\Vert v^n\Vert_{L^2(0,T;H^1(0,L))}.
\end{equation}
Thus $\lbrace v^nv^n_x\rbrace_{n\in\mathbb{N}}$ and $\left\{\frac{\sath(a\lambda^n v^n)}{\lambda^n}\right\}_{n\in\mathbb{N}}$  are also subsets of $L^2(0,T;H^{-2}(0,L))$ since $L^2(0,L)\subset L^1(0,L)\subset H^{-1}(0,L)\subset H^{-2}(0,L)$. Combined with \eqref{alternative4} it implies that $\lbrace v_t^n\rbrace_{n\in\mathbb{N}}$ is a bounded sequence in $L^2(0,T;H^{-2}(0,L))$. Since $\lbrace v^n\rbrace_{n\in\mathbb{N}}$ is a bounded sequence of $L^2(0,T;H^1(0,L))$, then we get with Lemma \ref{aubin-lions} that a subsequence of $\lbrace v^n\rbrace_{n\in\mathbb{N}}$ also denoted by $\lbrace v^n\rbrace_{n\in\mathbb{N}}$ converges strongly in $L^2(0,T;L^2(0,L))$ to a limit $v$. Moreover, with the last line of \eqref{alternative4}, it holds that $\Vert v\Vert_{L^2(0,T;L^2(0,L))}=1$.

Therefore, having in mind \eqref{alternative-limit-l2}, we get
\begin{equation}
\Vert v_x(.,0)\Vert^2_{L^2(0,T)}+\int_0^T\int_0^L ak(r)v^2 dxdt\leq \liminf_{n\rightarrow +\infty} \left\{\Vert v^n_x(.,0)\Vert^2_{L^2(0,T)}+\int_0^T \int_0^L ak(r)(v^n)^2dxdt\right\}=0.
\end{equation}
Thus,
\begin{equation}
ak(r)v^2(t,x)=0,\: \forall x\in [0,L],\forall t\in (0,T),\: \text{ and }\: v_x(t,0)=0,\: \forall t\in (0,T).
\end{equation}
and therefore
\begin{equation}
\label{unique-continuation}
v(t,x)=0,\: \forall x\in \omega,\forall t\in (0,T),\: \text{ and }\: v_x(t,0)=0,\: \forall t\in (0,T).
\end{equation}

%Since $\Vert y^n(0,.)\Vert_{L^2(0,L)}\leq r$, we can extract from $\lbrace \lambda^n\rbrace_{n\in\mathbb{N}}$ a convergent subsequence, still denoted by $\lbrace \lambda_n\rbrace_{n\in\mathbb{N}}$, such that $\lambda_n\rightarrow\lambda$ with $\lambda\geq 0$.

We obtain that the limit function $v$ satisfies
\begin{equation}
\label{kdv-unique-continuation}
\left\{
\begin{array}{l}
v_t+v_x+v_{xxx}+\lambda vv_x=0,\\
v(t,0)=v(t,L)=v_x(t,L)=0,\\
\Vert v\Vert_{L^2(0,T;L^2(0,L))}=1,
\end{array}
\right.
\end{equation}
with $\lambda\geq 0$. Let us consider $u:=v_t$ which satisfies
\begin{equation}
\label{kdv-unique-continuation-u}
\left\{
\begin{array}{l}
u_t+u_x+u_{xxx}+\lambda v_xu+\lambda vu_x=0,\\
u(t,0)=u(t,L)=u_x(t,L)=0,
\end{array}
\right.
\end{equation}
with $u(0,.)=-v^\prime(0,.)-v^{\prime\prime\prime}(0,.)-\lambda v(0,.)v^\prime(0,.)\in H^{-3}(0,L)$
and
$$
u(t,x)=0,\: \forall x\in \omega,\forall t\in (0,T),\: \text{ and }\: u_x(t,0)=0,\: \forall t\in (0,T).
$$
Let us recall the following result. 
\begin{lemma}[\cite{pazoto2005localizeddamping}, Lemma 3.2]
There exists a positive value $C_{16}(T,r)>0$ such that for any solution $u$ to \eqref{kdv-unique-continuation-u} where $v$ is solution to \eqref{kdv-unique-continuation}, it holds 
\begin{equation}
\Vert u_x(.,0)\Vert^2_{L^2(0,T)}+\Vert u(0,.)\Vert^2_{H^{-3}(0,L)}\geq C_{16}\Vert u(0,.)\Vert^2_{L^2(0,L)}.
\end{equation}
\end{lemma}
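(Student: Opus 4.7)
I would argue by contradiction via the now classical compactness--uniqueness scheme that pervades the KdV control literature. Assume no such $C_{16}$ exists; then there is a sequence of pairs $(v^n, u^n)$ with $v^n$ solving \eqref{kdv-unique-continuation}, $u^n = v^n_t$ solving \eqref{kdv-unique-continuation-u}, and, after normalising by $\Vert u^n(0,\cdot)\Vert_{L^2(0,L)}=1$,
\begin{equation}
\Vert u^n_x(\cdot,0)\Vert_{L^2(0,T)}^2 + \Vert u^n(0,\cdot)\Vert_{H^{-3}(0,L)}^2 \longrightarrow 0.
\end{equation}
The contradiction I aim for is to show that the limit of $u^n$ vanishes identically, even though the $L^2$-norm of its trace at $t=0$ equals $1$.

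The first step is to secure uniform a priori bounds. Since $v^n$ is bounded in $L^\infty(0,T;L^2(0,L))\cap L^2(0,T;H^1(0,L))$ by KdV estimates of the type used in Lemma \ref{global-estimation}, multiplier identities applied to \eqref{kdv-unique-continuation-u} yield a uniform bound for $u^n$ in $\mathcal{B}(T)$ and for $u^n_t$ in $L^2(0,T;H^{-2}(0,L))$. The Aubin--Lions Lemma \ref{aubin-lions} then provides a subsequence along which $u^n\to u$ strongly in $L^2(0,T;L^2(0,L))$, and by interpolation with the time-derivative bound one upgrades this to strong convergence in $C([0,T];L^2(0,L))$. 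In particular $u^n(0,\cdot)\to u(0,\cdot)$ in $L^2(0,L)$, so $\Vert u(0,\cdot)\Vert_{L^2(0,L)}=1$. A similar argument gives $v^n\to v$ strongly in $L^2(0,T;L^2(0,L))$, and passage to the limit in \eqref{kdv-unique-continuation-u} shows that $u$ satisfies the linearised KdV equation around $v$ with the same boundary conditions.

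The second step is unique continuation for the limit. The assumption gives $u_x(t,0)=0$ a.e.\ on $(0,T)$, while the earlier steps of the main proof already forced $v$ to vanish on $(0,T)\times\omega$, hence $u=v_t$ vanishes there too. These two pieces of information must be combined, via a Carleman-type unique continuation principle for the linearised KdV operator with a coefficient $v\in L^2(0,T;H^1(0,L))$, to conclude $u\equiv 0$. This contradicts $\Vert u(0,\cdot)\Vert_{L^2(0,L)}=1$ and closes the argument.

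The main obstacle is precisely the unique continuation step: Theorem \ref{theorem-saut} is phrased for the full nonlinear KdV with smooth-enough solutions, not for the linearised equation with a low-regularity coefficient $v$, and deriving the correct variant is delicate. This is the technical core of \cite{pazoto2005localizeddamping}, which is why we invoke their Lemma 3.2 as a black box rather than reproving it from scratch.
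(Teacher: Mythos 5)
The paper offers no proof of this statement: it is quoted verbatim as Lemma 3.2 of \cite{pazoto2005localizeddamping} and used as a black box, exactly as you propose at the end. So the only thing to assess is whether your reconstruction sketch would actually work, and as written it would not, for two concrete reasons. First, your contradiction scheme never uses the term $\Vert u(0,\cdot)\Vert^2_{H^{-3}(0,L)}$, which is the whole point of the lemma: along your normalized sequence this term tends to zero, so if you really had $u^n(0,\cdot)\to u(0,\cdot)$ strongly in $L^2(0,L)$ (as you claim), then $u(0,\cdot)=0$ in $H^{-3}(0,L)$ together with $\Vert u(0,\cdot)\Vert_{L^2(0,L)}=1$ is already the contradiction --- no unique continuation is needed at all. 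The compact lower-order term is precisely what makes this an ``easy'' observability estimate; unique continuation enters the paper only later, applied to $v$ via Theorem \ref{theorem-saut} after this lemma has upgraded the regularity of $u=v_t$.

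Second, the step you identify as the technical core is not only superfluous but unavailable. The lemma quantifies over \emph{all} solutions $v$ of \eqref{kdv-unique-continuation} and $u$ of \eqref{kdv-unique-continuation-u}; you may not import ``$v=0$ on $(0,T)\times\omega$'' from the outer contradiction argument into the proof of the lemma, since the sequence $(v^n,u^n)$ produced by negating the estimate has no reason to satisfy it. Without vanishing on $\omega$, the single trace condition $u_x(t,0)=0$ does not force $u\equiv 0$ (for critical lengths $L$ the linearized equation admits nontrivial solutions with this property), so the unique continuation you invoke would fail. There is also a gap in the compactness step: Aubin--Lions gives strong convergence of $u^n$ in $L^2(0,T;L^2(0,L))$ and, at best, in $C([0,T];H^{-s}(0,L))$ for some $s>0$; the claimed upgrade ``by interpolation'' to $C([0,T];L^2(0,L))$, hence to strong $L^2$ convergence of the traces $u^n(0,\cdot)$, is unjustified --- and it is exactly the crux. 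The correct route is a multiplier identity (multiply \eqref{kdv-unique-continuation-u} by $(T-t)u$, as in \eqref{rosier.3.13}) bounding $\Vert u(0,\cdot)\Vert^2_{L^2(0,L)}$ by $\Vert u\Vert^2_{L^2(0,T;L^2(0,L))}+\Vert u_x(\cdot,0)\Vert^2_{L^2(0,T)}$ plus lower-order terms controlled by $\Vert u(0,\cdot)\Vert_{H^{-3}(0,L)}$; the strong $L^2_{t,x}$ convergence from Aubin--Lions is then fed into that identity. So the citation is the right call, but the sketch misplaces both the source of compactness and the source of the contradiction.
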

Applying the result of this lemma, we get $u(0,.)\in L^2(0,L)$ and therefore $u=v_t\in\mathcal{B}(T)$. Since $v,v_t\in L^2(0,T;H^1(0,L))$ and $v\in C([0,T];H^1(0,L))$, we can conclude that $vv_x\in L^2(0,T;L^2(0,L))$. In this way, $v_{xxx}=-v_t-v_x-\lambda vv_x\in L^2(0,T;L^2(0,L))$ and therefore $v\in L^2(0,T;H^3(0,L))$. Finally, using Theorem \ref{theorem-saut}, we obtain
$$
v(t,x)=0,\quad \forall x\in [0,L],\: t\in [0,T].
$$
Thus we get a contradiction with $\Vert v\Vert_{L^2(0,T;L^2(0,L))}=1$. It concludes the proof of Claim \ref{claim-statement} and thus Lemma \ref{local_as_stab} in the case where $\sat=\sath$. \\

\textbf{Case 2:} $\sat=\satl$. 

Following the same strategy than before, we write the following claim. 
\begin{claim}
\label{claim-statement1}
For any $T>0$ and any $r>0$, there exists a positive constant $C_{17}=C_{17}(T,r)$ such that for any mild solution $y$ to \eqref{nlkdv_sat} with an initial condition $y_0\in L^2(0,L)$ such that $\Vert y_0\Vert_{L^2(0,L)}\leq r$, it holds that
\begin{equation}
\label{claim1}
\Vert y_0\Vert^2_{L^2(0,L)}\leq C_{17}\left(\int_0^T |y_x(t,0)|^2dt+2\int_0^T \int_0^L \satl(ay(t,x))y(t,x)dtdx\right).
\end{equation}
\end{claim}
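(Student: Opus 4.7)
The plan is to follow the contradiction argument of Case 1, adapted to the fact that the localized saturation $\satl$ does not admit a clean sector-condition bound in terms of $a(x)y^2$ unless an $L^\infty$ control on the state is available. Combining \eqref{rosier.3.13} with a forthcoming observability-type estimate
\begin{equation*}
\int_0^T\!\!\int_0^L |y|^2\,dxdt \leq C\left(\int_0^T |y_x(t,0)|^2 dt + 2\int_0^T\!\!\int_0^L \satl(ay)y\,dxdt\right),
\end{equation*}
where $C=C(T,r)$ and $\Vert y_0\Vert_{L^2(0,L)}\leq r$, would immediately yield \eqref{claim1}, so the whole problem reduces to proving this inequality.

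I would argue by contradiction exactly as in Case 1: assuming the inequality fails, extract a sequence of mild solutions $y^n$ with $\Vert y^n(0,\cdot)\Vert_{L^2(0,L)}\leq r$ and set $\lambda^n := \Vert y^n\Vert_{L^2(0,T;L^2(0,L))}$, $v^n := y^n/\lambda^n$. The sequence $\{\lambda^n\}$ is bounded by \eqref{dissipativity-alternative}, the sequence $\{v^n\}$ is bounded in $L^2(0,T;H^1(0,L))$ by \eqref{alternative-H1}, and the pointwise estimate $|\satl(\sigma)|\leq |\sigma|$ gives $\Vert \satl(a\lambda^n v^n)/\lambda^n\Vert_{L^2(0,T;L^2(0,L))}\leq a_1 \Vert v^n\Vert_{L^2(0,T;L^2(0,L))}$. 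Applying Lemma \ref{aubin-lions} exactly as in Case 1 produces, up to a subsequence, $\lambda^n\to\lambda\geq 0$ and $v^n\to v$ strongly in $L^2(0,T;L^2(0,L))$ with $\Vert v\Vert_{L^2(0,T;L^2(0,L))}=1$, together with $v^n_x(\cdot,0)\to 0$ in $L^2(0,T)$.

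The main obstacle, in contrast with Case 1, is to show simultaneously that $v\equiv 0$ on $(0,T)\times\omega$ and that the rescaled saturation term $\satl(a\lambda^n v^n)/\lambda^n$ vanishes in the limit equation. Since $\satl(as)s\geq 0$ pointwise, after passing to an a.e.\ convergent subsequence Fatou's lemma applies to $\int_0^T\int_0^L \satl(ay^n)y^n\,dxdt\to 0$, but the argument has to be split. If $\lambda>0$, the continuity of $\satl$ gives $\satl(a\lambda^n v^n)v^n/\lambda^n \to \satl(a\lambda v)v/\lambda$ a.e., hence Fatou forces this limit to vanish a.e., and since $a(x)\geq a_0>0$ on $\omega$ this implies $v=0$ on $(0,T)\times\omega$. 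If $\lambda=0$, then $y^n=\lambda^n v^n\to 0$ a.e., so $|ay^n|\leq u_0$ holds eventually at a.e.\ point, and therefore $\satl(ay^n)y^n/(\lambda^n)^2=a(v^n)^2\to av^2$ a.e.; Fatou again yields $v=0$ on $(0,T)\times\omega$. In both cases, combining the pointwise domination $|\satl(a\lambda^n v^n)/\lambda^n|\leq a|v^n|$ (peculiar to $\satl$, which acts componentwise) with the dominated convergence theorem shows $\satl(a\lambda^n v^n)/\lambda^n\to 0$ in $L^2(0,T;L^2(0,L))$, because the pointwise limit equals $\satl(a\lambda v)/\lambda$ or $av$, which vanishes on $\{a>0\}$ since $v$ does, and vanishes trivially elsewhere.

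Passing to the limit in the rescaled equation then produces precisely the system \eqref{kdv-unique-continuation} with $\lambda\geq 0$, together with $v=0$ on $(0,T)\times\omega$ and $v_x(\cdot,0)=0$. The remainder of the argument is verbatim Case 1: the regularity bootstrap based on Lemma 3.2 of \cite{pazoto2005localizeddamping} applied to $u=v_t$ upgrades $v$ to $L^2(0,T;H^3(0,L))$, and Theorem \ref{theorem-saut} then yields $v\equiv 0$, contradicting $\Vert v\Vert_{L^2(0,T;L^2(0,L))}=1$ and establishing Claim \ref{claim-statement1}. The hard part is genuinely the limit of the saturation term: unlike $\sath$, $\satl$ provides no a priori lower bound of the form $\satl(ay^n)y^n\geq a k_n (y^n)^2$ uniform in $n$ when only $L^2$ bounds on the state are available, which is why the dichotomy $\lambda=0$ vs.\ $\lambda>0$ combined with Fatou on a pointwise nonnegative integrand becomes the essential device.
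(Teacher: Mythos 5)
Your proof is correct and shares the paper's global skeleton (reduction via \eqref{rosier.3.13} to an observability inequality, contradiction, rescaling $v^n=y^n/\lambda^n$, Aubin--Lions compactness, then the Saut--Scheurer unique continuation endgame), but the device you use for the one step that genuinely distinguishes $\satl$ from $\sath$ is different. The paper keeps the sector condition of Lemma \ref{sat-l2-local}(ii) as its main tool: using $\Vert y^n\Vert^2_{L^2(0,T;H^1(0,L))}\leq\beta$ and Poincar\'e, it introduces the level sets $\Omega_i=\lbrace t:\sup_x|y^n(t,x)|>i\rbrace$, shows by a Chebyshev-type estimate that $\nu(\Omega_i)\leq L\beta/i^2$, applies the sector condition with constant $k(i)$ on $\Omega_i^c$ to deduce $ak(i)v=0$ there, and concludes $v=0$ on $\omega$ for a.e.\ $t$ because $\bigcup_i\Omega_i^c$ has full measure. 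You instead discard the sector condition and exploit the pointwise nonnegativity of $\satl(ay^n)y^n$ together with the continuity of the scalar map $\texttt{sat}$: Fatou applied along an a.e.-convergent subsequence forces the pointwise limit of $\satl(a\lambda^nv^n)v^n/\lambda^n$ to vanish, and the dichotomy $\lambda>0$ versus $\lambda=0$ identifies that limit as $\satl(a\lambda v)v/\lambda$ or $av^2$, either of which kills $v$ on $\lbrace a>0\rbrace\supseteq\omega$. Both routes work. Yours has two genuine advantages: it sidesteps the fact that the sets $\Omega_i$ in the paper depend on $n$ (a point the paper glosses over when passing to the limit in \eqref{convergence-absurde-satloc}), and it explicitly justifies, via the domination $|\satl(a\lambda^nv^n)|/\lambda^n\leq a_1|v^n|$ and generalized dominated convergence, why the rescaled saturation term disappears from the limit system \eqref{kdv-uc-satl} --- a step the paper leaves implicit. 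The paper's level-set argument, in exchange, stays closer to the quantitative sector-condition philosophy used in the rest of the paper. One small imprecision on your side: what the contradiction hypothesis drives to zero is the rescaled quantity $\int_0^T\int_0^L\satl(ay^n)y^n\,dx\,dt/(\lambda^n)^2$, not the unrescaled integral you name when invoking Fatou; your subsequent computations use the correct normalization, so nothing breaks.
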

If Claim \ref{claim-statement1} holds, we obtain also \eqref{stability-absurd} for a suitable choice of $\gamma$ and we end the proof of Lemma \ref{local_as_stab} when $\sat=\satl$. \startmodif Due to \stopmodif \eqref{rosier.3.13}, we see that in order to prove Claim \ref{claim-statement1}, it is sufficient to obtain the existence of $C_{18}>0$ such that
\begin{equation}
\label{claim1-contradiction}
\int_0^T \int_0^L  |y(t,x)|^2dtdx\leq C_{18}\left(\int_0^T |y_x(t,0)|^2dt+2\int_0^T \int_0^L \satl(ay(t,x))y(t,x)dtdx\right).
\end{equation}
We argue by contradiction to prove \eqref{claim1-contradiction}. Then, we assume that there exists a sequence of mild solutions $\lbrace y^n\rbrace_{n\in\mathbb{
N}}\subseteq \mathcal{B}(T)$ to \eqref{nlkdv_sat} with
\begin{equation}
\label{alternative-initial-conditions1}
\Vert y^n(0,.)\Vert_{L^2(0,L)}\leq r
\end{equation}
and such that
\begin{equation}
\label{alternative-limit1}
\lim_{n\rightarrow +\infty}\frac{\Vert y^n\Vert^2_{L^2(0,T;L^2(0,L))}}{\int_0^T |y_x^n(t,0)|^2dt+2\int_{0}^T \int_0^L \satl(ay^n(t,x))y^n(t,x)dtdx}=+\infty.
\end{equation}
Note that \eqref{alternative-initial-conditions1} implies with \eqref{dissipativity-alternative} that
\begin{equation}
\label{alternative31}
\Vert y^n(t,.)\Vert_{L^2(0,L)}\leq r,\quad \forall t\in [0,T].
\end{equation}

Note that  we have, from \eqref{alternative-r} and \eqref{alternative-H1}
$$\Vert y^n\Vert^2_{L^2(0,T;H^1(0,L))}\leq \beta,$$
where
$$
\beta:=\frac{8T+2L}{3}r^2+\frac{TC}{27}r^4.
$$
Moreover, due to Poincar\'e inequality and the left Dirichlet boundary condition of \eqref{nlkdv_sat}, we obtain
\begin{equation}
\sup_{x\in [0,L]} |y^n(t,x)|\leq \sqrt{L}\Vert y^n(t,.)\Vert_{H^1(0,L)},\quad \forall t\in [0,T].
\end{equation}
Thus, we see that
\begin{equation}
\label{omega1}
\int_0^T |y^n(t,x)|^2dt\leq L\Vert y^n\Vert^2_{L^2(0,T;H^1(0,L))}\leq L\beta.
\end{equation}
Now let us consider $\Omega_i\subset [0,T]$ defined as follows
\begin{equation}
\Omega_i=\left\{ t\in [0,T],\: \sup_{x\in [0,L]}|y(t,x)| >i\right\}. 
\end{equation}
In the following, we will denote by $\Omega_i^c$ its complement. It is defined by
\begin{equation}
\Omega_i^c=\left\{ t\in [0,T],\: \sup_{x\in [0,L]}|y(t,x)| \leq i\right\}. 
\end{equation}
Since the function $t\mapsto \sup_{x\in [0,L]}|y^n(t,x)|^2$ is a nonnegative function, we have
\begin{equation}
\label{omega2}
\int_0^T \sup_{x\in [0,L]}|y^n(t,x)|^2dt \geq \int_{\Omega_i} \sup_{x\in [0,L]}|y^n(t,x)|^2dt\geq i^2 \nu(\Omega_i),
\end{equation}
where $\nu(\Omega_i)$ denotes the Lebesgue measure of $\Omega_i$. Therefore, with \eqref{omega1}, we obtain
\begin{equation}
\nu(\Omega_i)\leq \frac{L\beta}{i^2}.
\end{equation}
We deduce from the previous equation that
\begin{equation}
\label{lebesgue-measure-satloc}
\max\left(T-\frac{L\beta}{i^2},0\right) \leq \nu(\Omega_i^c)\leq T.
\end{equation}
%Note that, when $i$ goes to infinity, we have
%\begin{equation}
%\nu(\Omega^c)=T
%\end{equation}
%where $\Omega^c$ denotes the limit of $\Omega_i^c$ when $i$ goes to infinity.
Moreover, with the second item of Lemma \ref{sat-l2-local}, we have, for all $i\in\mathbb{N}$,
\begin{eqnarray}
\label{sector-condition-satloc}
\int_0^T \int_0^L \satl(ay^n)y^ndtdx= &&\int_{\Omega_i}\int_0^L \satl(ay^n)y^ndtdx+\int_{\Omega_i^c}\int_0^L \satl(ay^n)y^ndtdx\nonumber \\
\geq && \int_{\Omega_i^c}\int_0^L \satl(ay^n)y^ndtdx\nonumber \\
\geq && \int_{\Omega_i^c} \int_0^L ak(i)(y^n)^2dtdx.
\end{eqnarray}
%Therefore, \eqref{alternative1} becomes, for all $i\in\mathbb{N}$
%\begin{equation}
%\Vert y^n(T,.)\Vert^2_{L^2(0,L)}\leq \Vert y^n(0,.)\Vert^2_{L^2(0,L)}-\int_0^T |y^n_x(t,0)|^2 dt - \int_{\Omega_i^c} \int_0^L ak(i)(y^n)^2dtdx.
%\end{equation}

Let $\lambda^n:=\Vert y^n\Vert_{L^2(0,T;L^2(0,L))}$ and $v^n(t,x)=\frac{y^n(t,x)}{\lambda^n}$. Notice that $\lbrace\lambda^n\rbrace_{n\in\mathbb{N}}$ is bounded, according to \eqref{alternative31}. Hence, there exists a subsequence, that we continue to denote by $\lbrace\lambda^n\rbrace_{n\in\mathbb{N}}$ such that
$$
\lambda^n\rightarrow \lambda\geq 0,\text{ as $n\rightarrow +\infty$}.
$$
Then, $v^n$ fullfills
\begin{equation}
\label{alternative41}
\left\{
\begin{array}{l}
v^n_t+v^n_{x}+v^n_{xxx}+\lambda^n v^nv^n_x+\frac{\satl(a\lambda^n v^n)}{\lambda^n}=0,\\
v^n(t,0)=v^n(t,L)=v^n_x(t,L)=0,\\
\Vert v^n\Vert_{L^2(0,T;L^2(0,L))}=1,
\end{array}
\right.
\end{equation}
and, due to \eqref{alternative-limit1},
$$
\int_0^T |v_x^n(t,0)|^2dt+2\int_
0^T \int_0^L \frac{\satl(a\lambda^n v^n)}{\lambda^n}v^n dtdx\rightarrow 0\text{ as $n\rightarrow +\infty$.}
$$
Moreover, due to \eqref{sector-condition-satloc}, we have, for all $i\in\mathbb{N}$,
\begin{equation}
\label{convergence-absurde-satloc}
\int_0^T |v_x^n(t,0)|^2dt+2\int_{\Omega_i^c} \int_0^L ak(i)(v^n)^2dtdx\rightarrow 0\text{ as $n\rightarrow +\infty$.}
\end{equation}

Note that from Lemma \ref{lipschitz-satl2},
\begin{equation}
\left\Vert \frac{\satl(a\lambda^n v^n)}{\lambda^n}\right\Vert_{L^2(0,T;L^2(0,L))}\leq 3a_1\sqrt{L}\Vert v^n\Vert_{L^2(0,T;H^1(0,L)}.
\end{equation}
and therefore the sequence $\left\{ \frac{\satl(a\lambda^n v^n)}{\lambda^n}\right\}_{n\in\mathbb{N}}$ is a subset of $L^2(0,T;L^2(0,L))$. In addition, $\left\{ v^nv^n_x\right\}_{n\in\mathbb{N}}$ is a bounded sequence of $L^2(0,T;L^1(0,L))$. Note that $L^1(0,L)\subset L^2(0,L)\subset H^{-2}(0,L)$, thus $\left\{ \frac{\satl(a\lambda^n v^n)}{\lambda^n}\right\}_{n\in\mathbb{N}}$ and $\left\{ v^nv^n_x\right\}_{n\in\mathbb{N}}$ are bounded sequences of $L^2(0,T;H^{-2}(0,L))$. Since $v^n_t=-v^n_x-v^n_{xxx}-\lambda^n v^nv^n_x-\frac{\satl(a\lambda^n v^n)}{\lambda^n}$, we know that $\lbrace v^n_t\rbrace_{n\in\mathbb{N}}$ is a subset of $L^2(0,T;H^{-2}(0,L))$. Since $\lbrace v^n\rbrace_{n\in\mathbb{N}}$ is a subset of $L^2(0,T;H^1(0,L))$, we obtain from Lemma \ref{aubin-lions} that $\lbrace v^n\rbrace_{n\in \mathbb{N}}$ \startmodif converges strongly to a function $v$ \stopmodif in $L^2(0,T;L^2(0,L))$. Futhermore, with \eqref{convergence-absurde-satloc} and due to the non-negativity of $k$, we have, for all $i\in\mathbb{N}$, 
\begin{equation}
\label{unique-continuation2}
ak(i)v(t,x)=0,\: \forall x\in [0,L],\forall t\in \Omega_i^c,\: \text{ and }\: v_x(t,0)=0,\: \forall t\in (0,T).
\end{equation}
Thus, since for all $i\in\mathbb{N}$, $k(i)$ is strictly positive,  we have
\begin{equation}
\label{unique-continuation3}
v(t,x)=0,\: \forall x\in \omega,\forall t\in \Omega_i^c,\: \text{ and }\: v_x(t,0)=0,\: \forall t\in (0,T).
\end{equation}
We obtain
\begin{equation}
\label{unique-continuation4}
v(t,x)=0,\: \forall x\in \omega,\forall t\in \bigcup_{i\in\mathbb{N}}\Omega_i^c ,\: \text{ and }\: v_x(t,0)=0,\: \forall t\in (0,T).
\end{equation}
Since, with \eqref{lebesgue-measure-satloc}, we know that $\nu\left(\bigcup_{i\in\mathbb{N}}\Omega_i^c\right)=T$, we get that, for almost every $t\in [0,T]$,
\begin{equation}
\label{unique-continuation5}
v(t,x)=0,\: \forall x\in \omega ,\: \text{ and }\: v_x(t,0)=0.
\end{equation}
We obtain that $v$ fullfills 
\begin{equation}
\label{kdv-uc-satl}
\left\{
\begin{array}{l}
v_t+v_x+v_{xxx}+\lambda vv_x=0,\\
v(t,0)=v(t,L)=v_x(t,L)=0,\ \startmodif, \stopmodif\
\Vert v\Vert_{L^2(0,T;L^2(0,L))}=1.
\end{array}
\right.
\end{equation}
Thus $v$ is a solution to a Korteweg-de Vries equation. In particular, it belongs to $\mathcal{B}(T)$ and is consequently in $C(0,T;L^2(0,L))$. Therefore, \eqref{unique-continuation5} becomes
\begin{equation}
\label{unique-continuation6}
v(t,x)=0,\: \forall x\in \omega,\forall t\in [0,T],\: \text{ and }\: v_x(t,0)=0,\: \forall t\in (0,T).
\end{equation}

We are in the same situation \startmodif as \stopmodif \eqref{kdv-unique-continuation}. Therefore we obtain once again a contradiction. We can conclude that Claim 2 is true. It concludes the proof of Lemma \ref{local_as_stab} when $\sat=\satl$ and completes the proof of Proposition \ref{local_as_stab}. \null\hfill$\Box$
 
\begin{remark}
Since the strategy followed in the last section is to argue by contradiction, we cannot estimate the exponential rate $\mu$. However, such a proof allows us to prove the local exponential stability of the solution whatever the saturation $\sat$ is.
%Moreover, in the previous proof, if we have $\lambda=0$, then $v$ satisfies the linear KdV equation
%\begin{equation}
%\label{lkdv-unique-continuation}
%\left\{
%\begin{split}
%&v_t+v_x+v_{xxx}=0,\\
%&v(t,0)=v(t,L)=v_x(t,L)=0,\\
%&\Vert v\Vert_{L^2(0,T;L^2(0,L))}=1.
%\end{split}
%\right.
%\end{equation}
%Since \eqref{lkdv-unique-continuation} is linear, the Holmgren's Uniqueness Theorem can be applied. Thus we obtain that the solution $v$ is zero everywhere. With the condition $\Vert v\Vert_{L^2(0,T;L^2(0,L))}=1$, we get a contradiction. Note that the Holmgren's Uniqueness Theorem cannot be used in the case where $\lambda>0$ since the equation is nonlinear. 
\end{remark}

\subsection{Proof of Theorem \ref{glob_as_stab}}

\label{section_astuce}

We are now in position to prove Theorem \ref{glob_as_stab}, following \cite{rosier2006global}.
By \startmodif Proposition \ref{local_as_stab}\stopmodif, there exists  $\mu^{\star}>0$ such that if
\begin{equation}
\Vert \tilde{y}_0\Vert_{L^2(0,L)}\leq 1,
\end{equation}
then the corresponding solution $\tilde{y}$ to (\ref{nlkdv_sat}) satisfies
\begin{equation}
\label{1-nl-stab-lyap}
\Vert \tilde{y}(t,.)\Vert_{L^2(0,L)}\leq K_1\Vert \tilde{y}_0\Vert_{L^2(0,L)}e^{-\mu^{\star}t}\qquad \forall t\geq 0,
\end{equation}
for some constants $K_1\geq 1$ which depends only on $\Vert \tilde{y}_0\Vert_{L^2(0,L)}$. In addition, for a given $r>0$, there exist two constants $K_r>0$ and $\mu_r>0$ such that if $\Vert y_0\Vert_{L^2(0,L)}\leq r$, then any mild solution $y$ to (\ref{nlkdv_sat}) satisfies
\begin{equation}
\Vert y(t,.)\Vert_{L^2(0,L)}\leq K_r \Vert y_0\Vert_{L^2(0,L)}e^{-\mu_r t}\qquad \forall t\geq 0.
\end{equation}
Consequently, setting $T_r:=\mu_r^{-1}\ln(rK_r)$, we have
$$
\Vert y_0\Vert_{L^2(0,L)}\leq r\Rightarrow \Vert y(T_r,.)\Vert_{L^2(0,L)}\leq 1.
$$
Therefore, using \eqref{1-nl-stab-lyap}, we obtain
\begin{equation}
\begin{array}{rcl}
\Vert y(t,.)\Vert_{L^2(0,L)} &\leq& K_1\Vert y(T_r,.)\Vert_{L^2(0,L)}e^{-\mu^{\star}(t-T_r)}\qquad \forall t\geq T_r,\\
&\leq& K_1K_r\Vert y_0\Vert_{L^2(0,L)}e^{\mu^{\star}T_r}e^{-\mu^{\star}t}\qquad \forall t\geq 0.
\end{array}
\end{equation}
Thus it concludes the proof of Theorem \ref{glob_as_stab}. \hfill \hspace*{1pt}\hfill $\Box$

\startmodif
\begin{remark}
As it has been noticed in Remark \ref{remark-gkdv}, the same result than Theorem \ref{glob_as_stab} can be obtained for \eqref{gkdv-rosier} following the strategy of \cite{rosier2006global}. Note that in \cite{rosier2006global}, a stabilization in $H^3(0,L)$ is obtained. The authors used a similar strategy than the one described in Remark \ref{remark-gkdv}. Hence, it seems harder to obtain such a result for \eqref{gkdv-saturated}, since the saturation introduces some non-smoothness.
\end{remark}
\stopmodif
\section{Simulations}

\label{sec_simu}

In this section we provide some numerical simulations showing the effectiveness of our control design. In order to discretize our KdV equation, we use a finite difference scheme inspired by \cite{nm_KdV}. The final time is denoted $T_{final}$. We choose $(N_x+1)$ points to build a uniform spatial discretization of the interval $[0,L]$ and $(N_t+1)$ points to build a uniform time discretization of the interval $[0,T_{final}]$. We pick a space step defined by $dx=L/N_x$ and a time step defined by $dt=T_{final}/N_t$. We approximate the solution with the following notation $y(t,x)\approx Y^i_j$, where $i$ denotes the time and $j$ the space discrete variables. 

Some used approximations of the derivative are given by
\begin{equation}
\mathcal{D}_-y=\frac{Y^i_j-Y^i_{j-1}}{dx}
\end{equation}
and
\begin{equation}
\mathcal{D}_+y=\frac{Y^i_{j+1}-Y^i_{j}}{dx}.
\end{equation}
As in \cite{nm_KdV}, we choose the numerical scheme $
y_x(t,x)\approx\frac{1}{2}(\mathcal{D}_++\mathcal{D}_-)(Y^i_j):=\mathcal{D}(Y^i_j)$ and $
y_t(t,x)\approx \frac{Y^{i+1}_j-Y^i_j}{dt}.$
For the other differentiation operator, we use
$y_{xxx}(t,x)\approx \mathcal{D}_+\mathcal{D}_+\mathcal{D}_-(Y^i_j)$.

Let us introduce a matrix notation. Let us
consider the matrices $D_-, D_+, D \in\mathbb{R}^{N_x\times N_x}$ given by
\begin{equation}
%\begin{split}
D_-=\frac{1}{dx}\begin{bmatrix}
1 & 0 & \ldots & \ldots & 0\\
-1 & 1 & \ddots & & \vdots\\
0 & \ddots & \ddots & \ddots & \vdots\\
\vdots & \ddots & \ddots & 1 & 0\\
0 & \ldots & 0 & -1 & 1
\end{bmatrix},\: D_+=\frac{1}{dx}\begin{bmatrix}
-1 & 1 & 0 & \ldots & 0\\
0 & -1 & 1 & \ddots & \vdots\\
\vdots & \ddots & \ddots & \ddots & 0 \\
\vdots &  & \ddots & -1 & 1\\
0 & \ldots &\ldots & 0 & -1
\end{bmatrix}
%\end{split}
\end{equation}
\begin{equation}
D:=\frac{1}{2}(D_++D_-)
\end{equation} 
and let us define $\mathcal{A}=D_+D_+D_-+D$, and $\mathcal{C}=\mathcal{A}+dtI$ where $I$ is the identity matrix in $\mathcal{M}_{N_x\times N_x}(\mathbb{R})$. Note that we choose this forward difference approximation in order to obtain a positive definite matrix $\mathcal{C}$.  

Moreover, for each discrete time $i$, we denote $Y^i:=\begin{bmatrix}
Y^i_1 & Y^i_2 & \ldots & Y_{N_x+1}^i
\end{bmatrix}^{\top}$.

Thus, inspired by \cite{nm_KdV}, we consider a completely implicit numerical scheme for the approximation of the nonlinear problem \eqref{nlkdv_sat} which reads as follows:
\begin{equation}
\label{numerical_scheme}
\left\{
\begin{array}{l}
\displaystyle \frac{Y_j^{i+1}-Y_j^{i}}{dt}+(\mathcal{A}Y^{i+1})_j+\frac{1}{2}\left(D[(Y^{i+1})^2]\right)_j+\sat(a_\delta Y_j^{i+1})=0,\quad j=1,\ldots N_x,\\
 Y^{i}_1=Y^i_{N_x+1}=Y^i_{N_x}=0,\\
 Y^1=\int_{x_{j-\frac{1}{2}}}^{x_{j+\frac{1}{2}}}y_0(x)dx,
\end{array}\right.
\end{equation}
where $x_j=(j+\frac{1}{2})dx$, $x_j=jdx$ and $Y^1$ denotes the discretized version of the initial condition $y_0(x)$. Note that $a_\delta$ is the approximation of the damping function $a=a(x)$ and is given by $a_\delta=\left( a_j\right)_{J=1}^{N_x}\in\mathbb{R}^{N_x}$, where each components $a_j$ is defined by $a_j:=\int_{x_{j-\frac{1}{2}}}^{x_{j+\frac{1}{2}}} a(x)dx$. 

Since we have the nonlinearities $yy_x$ and $\sat(ay)$, we use an iterative Newton fixed-point method to solve the nonlinear system 
$$\mathcal{C}Y^{i+1}=Y^i-dt\frac{1}{2}D(Y^{i+1})^2-dt\sat\left(a_\delta Y^{i+1}\right).$$
With $N_{iter}=5$, which denotes the number of iterations of the fixed point method, we get good approximations of the solutions. Note that for sufficiently large $N_{iter}$ the solutions can be approximated with this fixed-point method.

Given $Y^1$ satisfying \eqref{numerical_scheme}, the following is the structure of the algorithm used in our simulations. \\

\fbox{\begin{minipage}{15cm}
\textbf{For $i=1:N_t$}\\
\begin{itemize}
\item[$\bullet$] $Y_{1}^{i}=Y^{i}_{N_x}=Y^{i}_{N_{x}+1}=0$;
\item[$\bullet$] Setting $J(1)=Y^i$, for all $k\in\lbrace 1,\ldots, N_{iter}\rbrace$, solve
$$J(k+1)=C^{-1}(Y^i-dt\frac{1}{2}D(J(k))^2-dt\sat(a_\delta J(k)))$$\\
Set $Y^{i+1}=J(N_{iter})$
\end{itemize}
\textbf{end}
\end{minipage}}\\

In order to illustrate our theoretical results, we perform some simulations with $L= 2\pi$, for which we know that the linearized KdV equation is not asymptotically stable. To be more specific, letting $y_0(x)=1-\cos(x)$ and $f=0$, it holds that the energy $\Vert y\Vert^2_{L^2(0,L)}$ of the linearized equation \eqref{lkdv} remains constant for all $t\geq 0$. Let us perform a simulation of \eqref{nlkdv_sat} with these parameters. 

We first simulate our system in the case where the damping is not localized. We use the saturation function $\sath$. Given $a_0=1$, $T_{final}=6$ and $L=2\pi$, Figure \ref{figure1} shows the solution to \eqref{nlkdv}, denoted by $y_w$, with the unsaturated control $f=a_0y_w$ and starting from $y_0$. Figure \ref{figure2} illustrates the simulated solution with the same initial condition and a saturated control $f=\sath(a_0y)$ where $u_0=0.5$. Figure \ref{figure3} gives the evolution of the control with respect to the time and the space. We check in Figures \ref{figure1} and \ref{figure2} that the solution to \eqref{nlkdv_sat} converges to $0$ with the unsaturated and the saturated controls as proven in Theorem \ref{glob_as_stab}. 

The evolution of the $L^2$-energy of the solution in these two cases is given by Figure \ref{figure4}. With $\Vert y_0\Vert_{L^2(0,L)}:=3.07$ and the values of $u_0$, $a_0$ and $a_1$, the value $\mu$ is computed numerically following the formula \eqref{formula-mu} given in Theorem \ref{glob_as_stab}. It is is equal to $\mu=0.3257$. We deduce from the second point of Theorem \ref{glob_as_stab} that the energy function $\Vert y\Vert^2_{L^2(0,L)}$ converges exponentially to $0$ with an explicit decay rate given by $\mu$ as stated in Theorem \ref{glob_as_stab}. 

\begin{figure}[H]
   \begin{minipage}[c]{.50\linewidth}
      \includegraphics[scale=0.6]{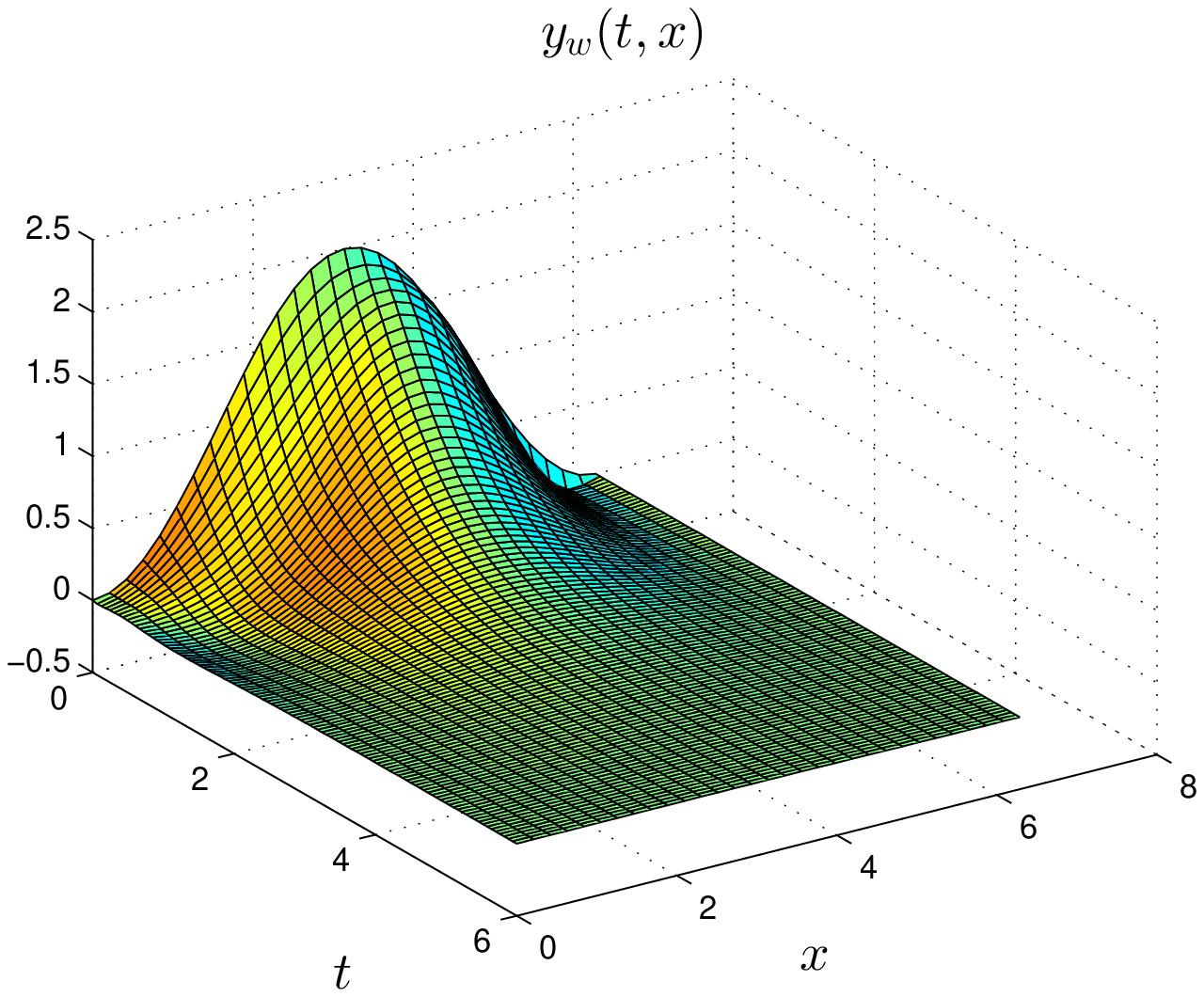}
      \caption{Solution $y_w(t,x)$ with the control $f=a_0y_w$ where $\omega=[0,L]$}
      \label{figure1}
   \end{minipage} \hfill
   \begin{minipage}[c]{.46\linewidth}
      \includegraphics[scale=0.6]{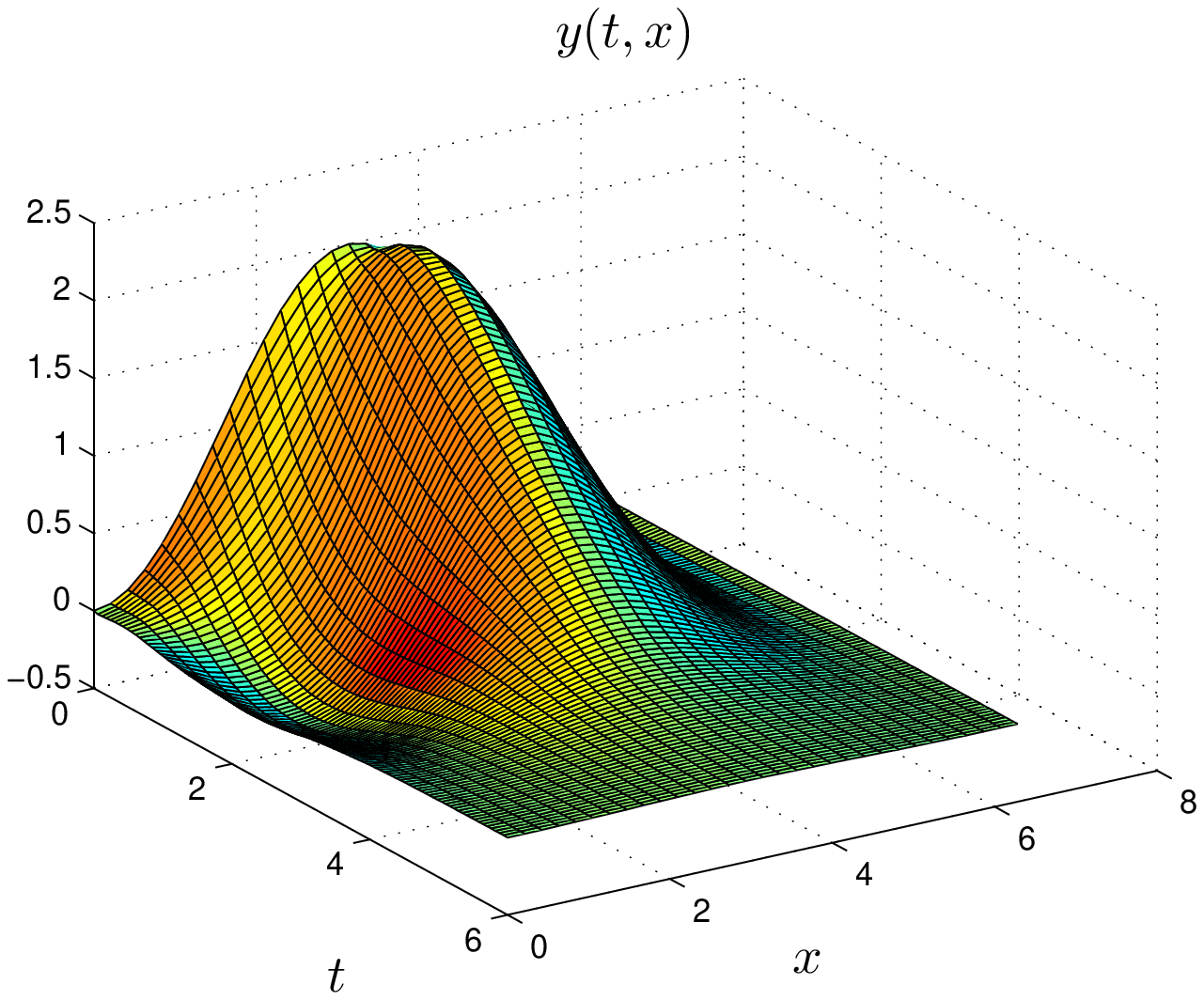}
      \caption{Solution $y(t,x)$ with the control $f=\sath(a_0y)$ where $\omega=[0,L]$, $u_0=0.5$}
      \label{figure2}
   \end{minipage}\hfill
   \end{figure}
\begin{figure}[H]   
   \begin{minipage}[c]{.46\linewidth}
      \includegraphics[scale=0.6]{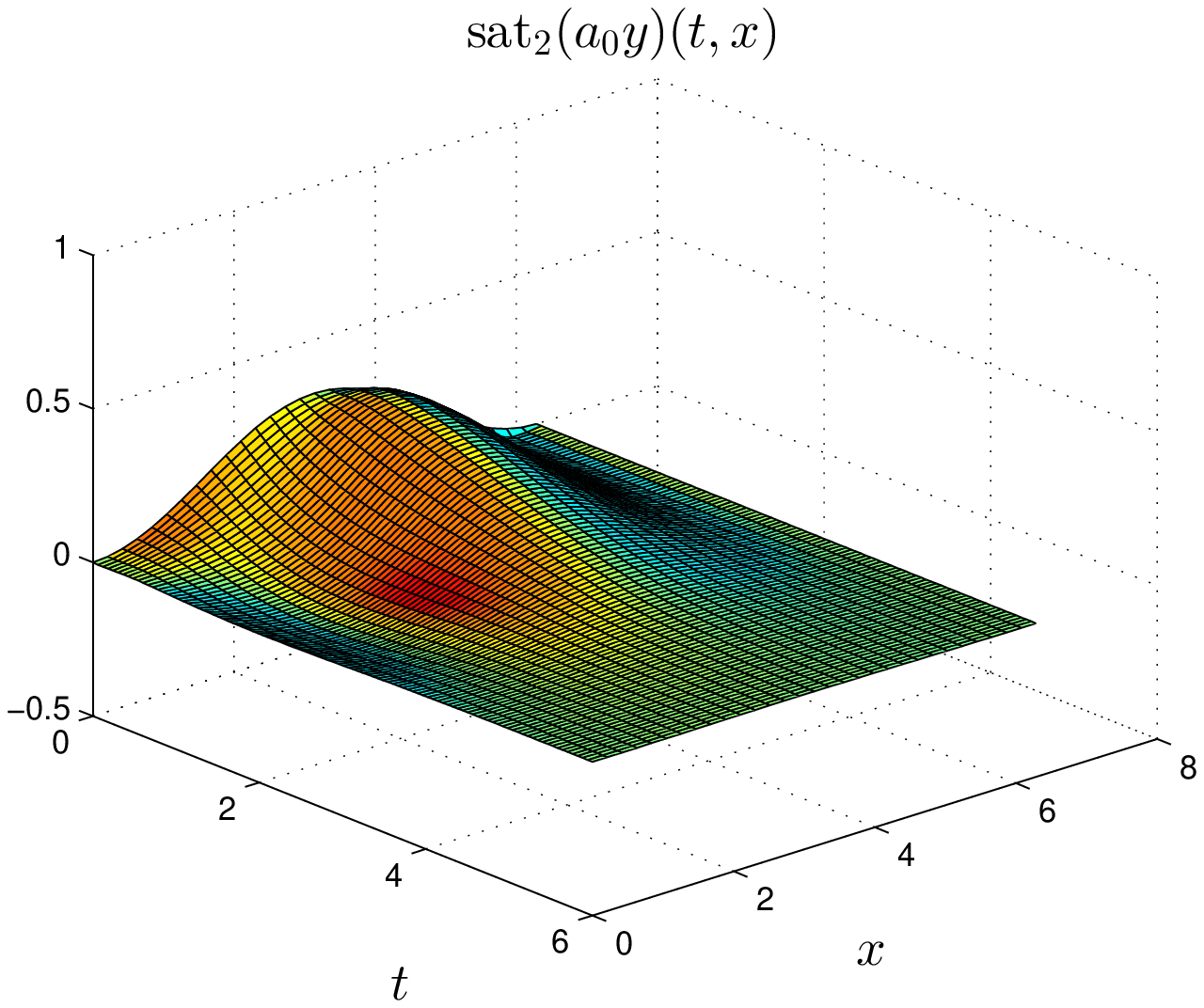}
\caption{Control $f=\sath(a_0 y)(t,x)$ where $\omega=[0,L]$, $u_0=0.5$}
\label{figure3}
   \end{minipage}\hfill
   \begin{minipage}[c]{.46\linewidth}
      \includegraphics[scale=0.6]{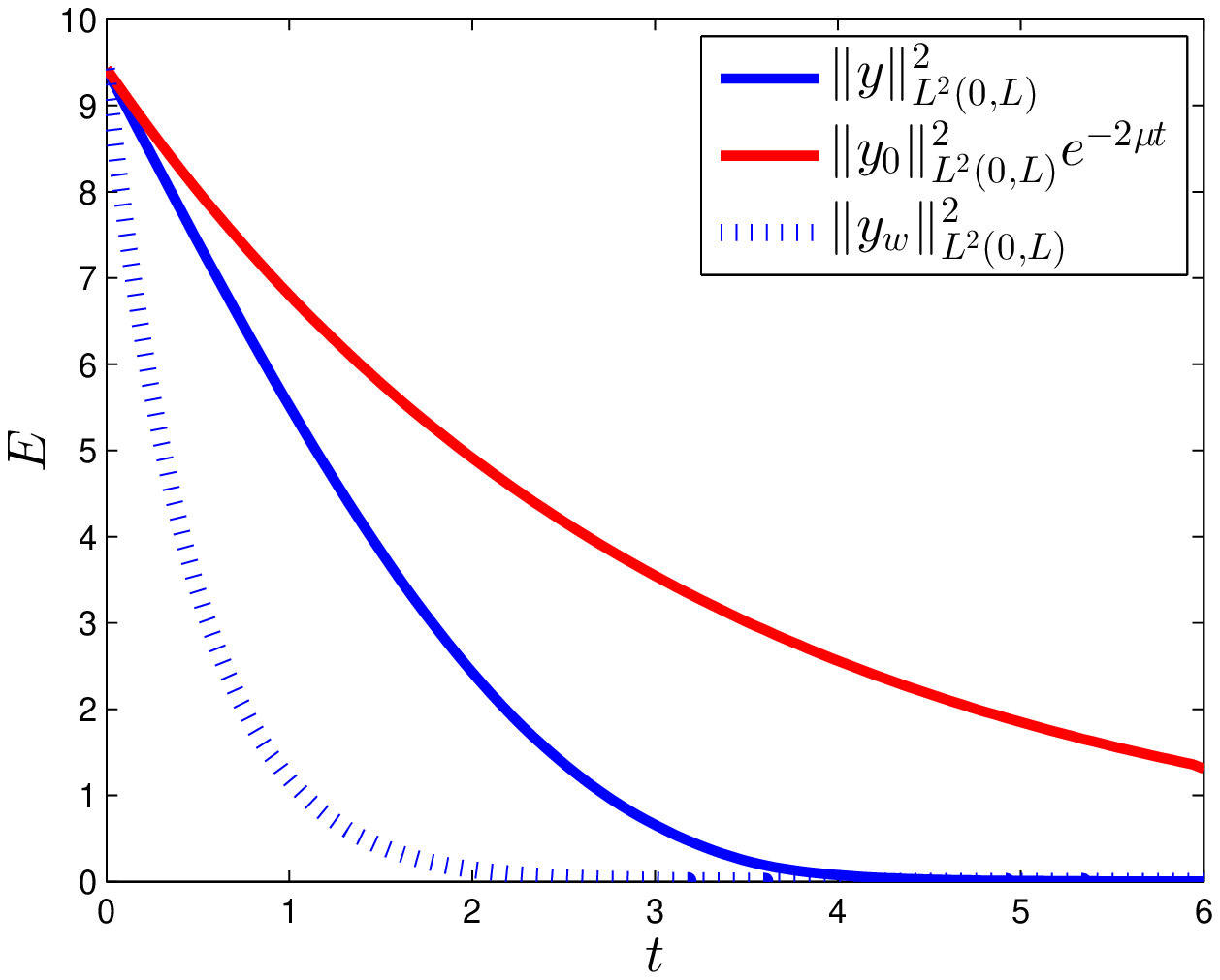}
      \caption{Blue: Time evolution of the energy function $\Vert y\Vert^2_{L^2(0,L)}$ with a saturation $u_0=0.5$ and $a_0=1$. Red: Time evolution of the theoritical energy $\Vert y_0\Vert^2_{L^2(0,L)}e^{-2\mu t}$. Dotted line: Time evolution of the solution without saturation $y_w$ and $a_0=1$.}
      \label{figure4}
   \end{minipage}
\end{figure}

We now focus on the case where the damping is localized. We close the loop with the saturated controller $f=\satl(ay)$ where $a$ is defined by $a(x)=a_0=1$, for all $x\in\omega:=\left[\frac{1}{3}L,\frac{2}{3}L\right].$

Given $T_{final}=6$, Figure \ref{figure7} shows the simulated solution of \eqref{nlkdv}, denoted by $y_w$, with a localized control that is not saturated and starting from $y_0$. Figure \ref{figure8} illustrates the simulated solution to \eqref{nlkdv_sat} with the same initial condition, but with a localized saturated control whose saturation level is given by $u_0=0.5$. We check, in Figures \ref{figure7} and \ref{figure8}, that the mild solution to \eqref{nlkdv_sat} converges to $0$ as stated in Theorem \ref{glob_as_stab}. Moreover, Figure \ref{figure9} gives the evolution of the control with respect to the time and the space. 

The evolution of the $L^2$-energy of the solution in these two last cases is given by Figure \ref{figure10}. We can see that the energy function $\Vert y\Vert^2_{L^2(0,L)}$ converges exponentially to $0$ as stated in Proposition \ref{local_as_stab}. However, in contrary with the case $\sat=\sath$ and $\omega=[0,L]$, we cannot have an estimation of the decay rate since our proof is based on a contradiction argument. 

\begin{figure}[H]
   \begin{minipage}[c]{.46\linewidth}
      \includegraphics[scale=0.6]{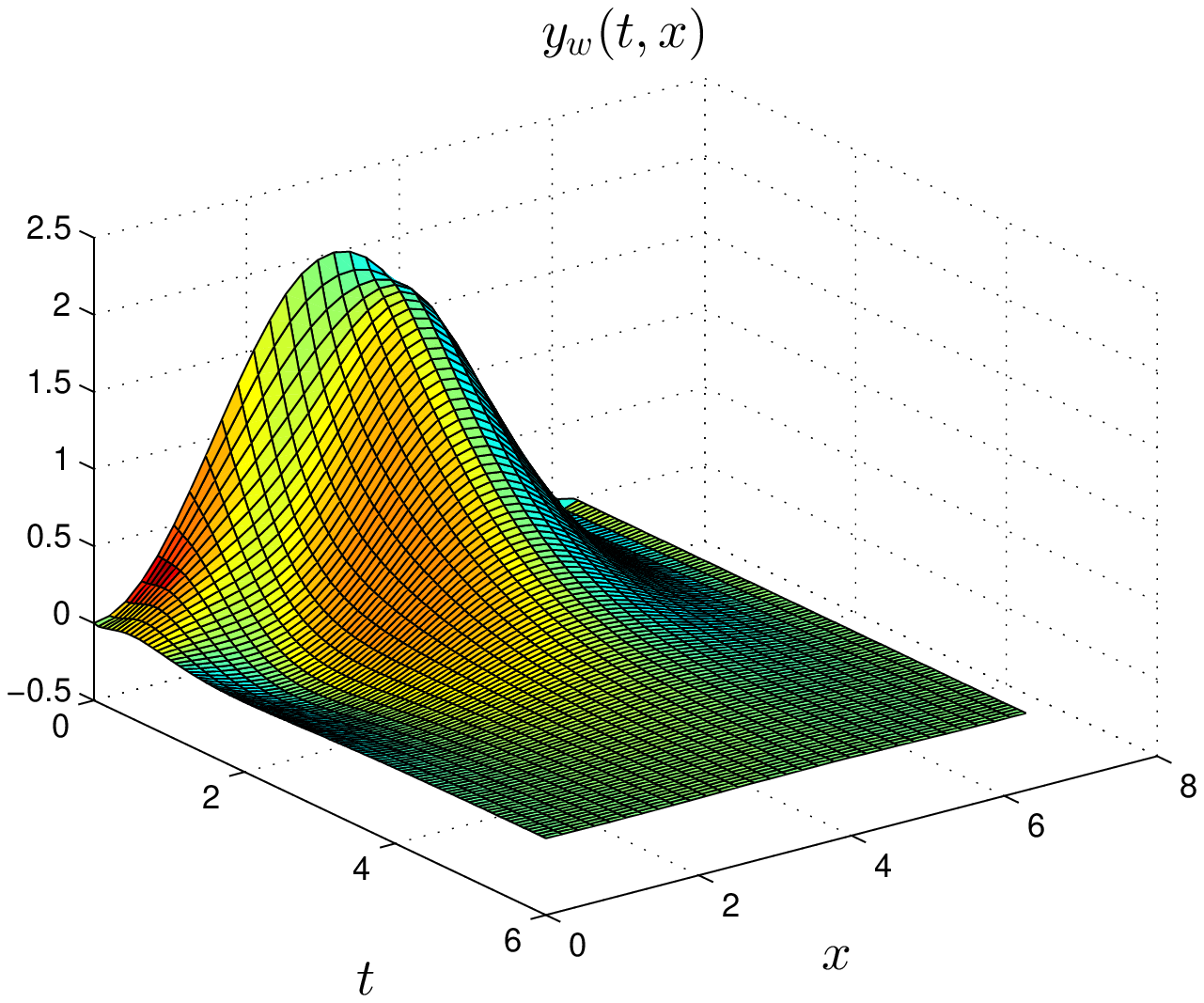}
      \caption{Solution $y_w(t,x)$ with a localized feedback law without saturation}
      \label{figure7}
   \end{minipage} \hfill
   \begin{minipage}[c]{.46\linewidth}
      \includegraphics[scale=0.6]{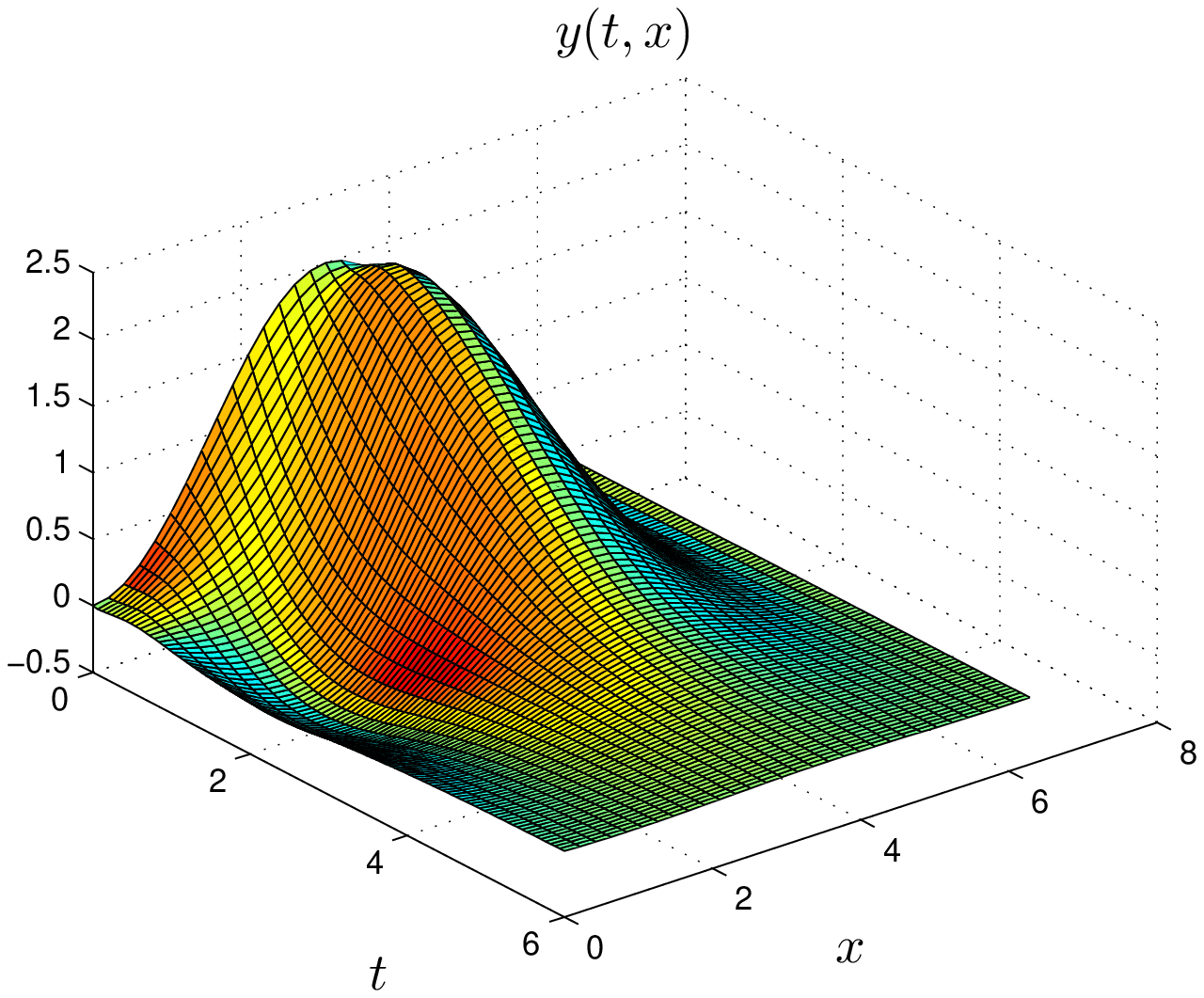}
      \caption{Solution $y(t,x)$ with a localized feedback law saturated; $u_0=0.5$}
      \label{figure8}
   \end{minipage}
\end{figure}

\begin{figure}[H]
   \begin{minipage}[c]{.46\linewidth}
      \includegraphics[scale=0.6]{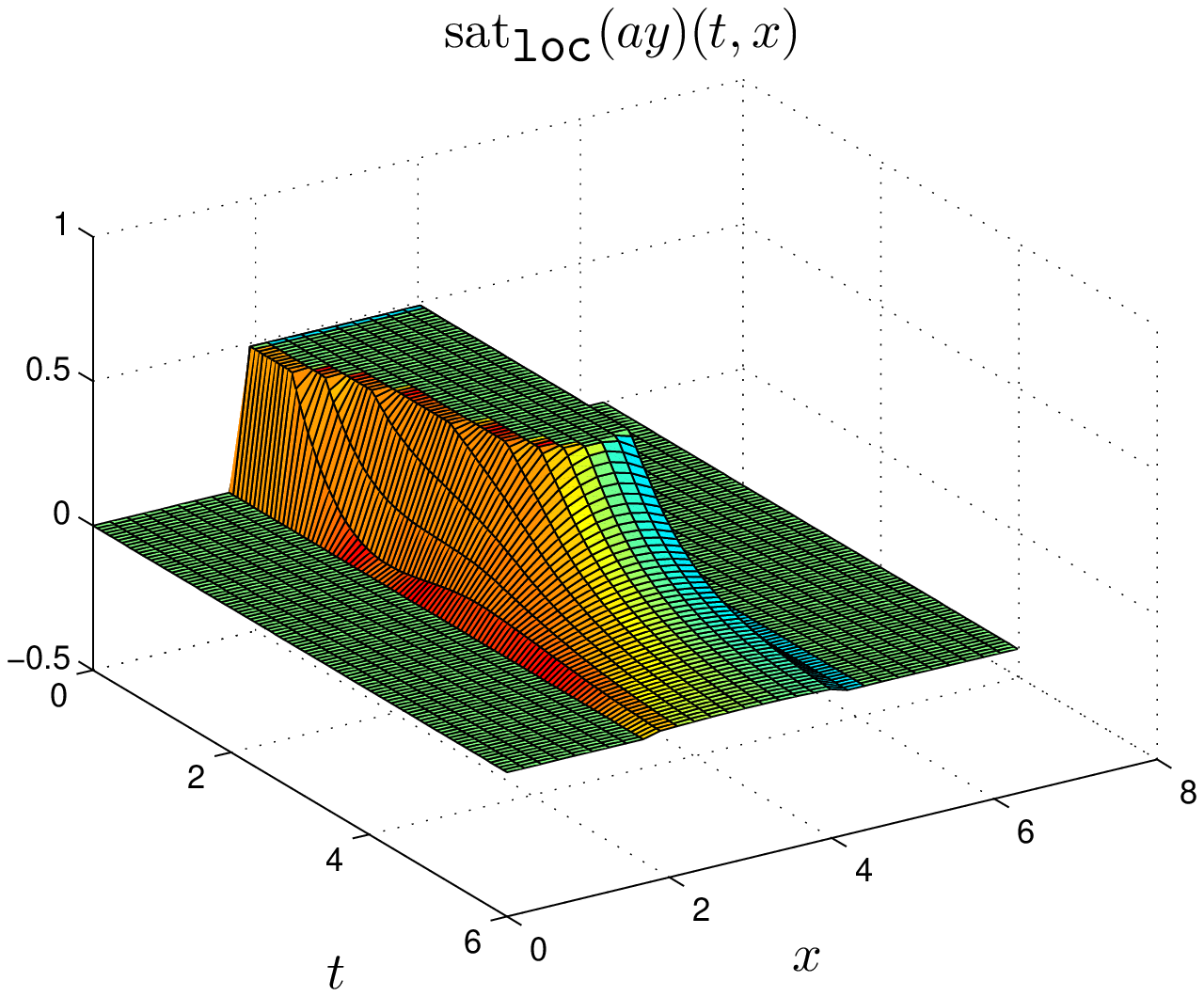}
      \caption{Control $f=\satl(a y)(t,x)$ where $\omega=\left[\frac{1}{3}L,\frac{2}{3}L\right]$, $u_0=0.5$}
      \label{figure9}
   \end{minipage} \hfill
   \begin{minipage}[c]{.46\linewidth}
      \includegraphics[scale=0.6]{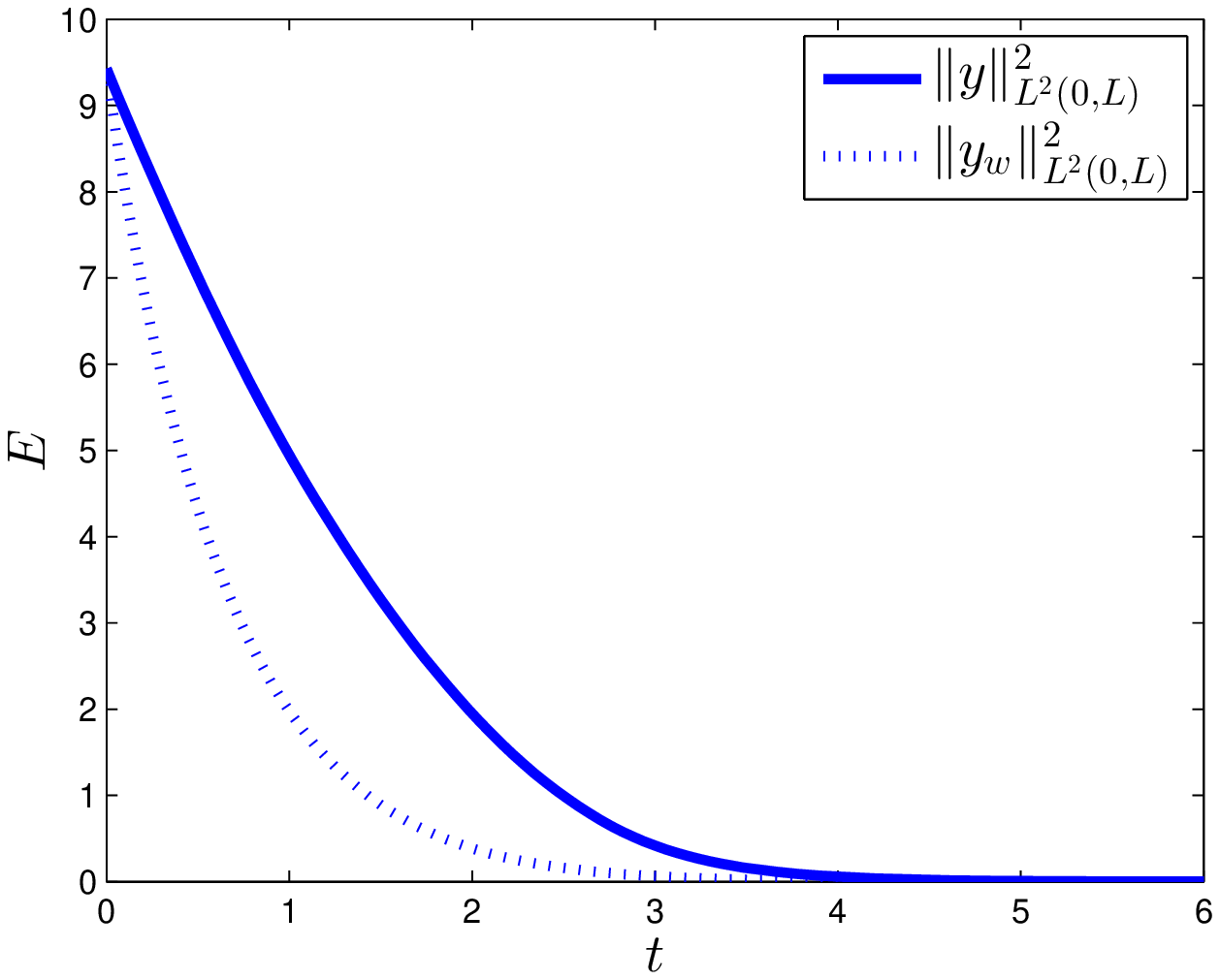}
      \caption{Blue: Time evolution of the energy function $\Vert y\Vert^2_{L^2(0,L)}$ with a saturation $u_0=0.5$, $a_0=1$ and $\omega=\left[\frac{1}{3}L,\frac{2}{3}L\right]$. Dotted line: Time evolution of the solution without saturation $y_w$ with $a_0=1$ and $\omega=\left[\frac{1}{3}L,\frac{2}{3}L\right]$.}
      \label{figure10}
   \end{minipage}
\end{figure}

%\begin{figure}[H]
%      \includegraphics[scale=0.5]{}
%      \caption{Blue: Time evolution of the energy function $\Vert y\Vert^2_{L^2(0,L)}$ with a saturation $u_0=0.5$, $a_1=3$ and $\omega=\left[\frac{1}{3}L,\frac{2}{3}L\right]$. Dotted line: Time evolution of the solution without saturation with $a_0=1$ and $\omega=\left[\frac{1}{3}L,\frac{2}{3}L\right]$.}
%      \label{figure10}
%\end{figure}

\section{Conclusion}
\label{sec_conc}

In this paper, we have studied the well-posedness and the asymptotic stability of a Korteweg-de Vries equation with saturated distributed controls. The well-posedness issue has been tackled by using the Banach fixed-point theorem. The stability has been studied with two different methods: in the case where the control acts on all the domain saturated with $\sath$, we used a sector condition and Lyapunov theory for infinite dimensional systems; in the case where the control acts only on a part of the domain saturated with either $\sath$ or $\satl$, we argued by contradiction. We illustrate our results on some simulations, which show that the smaller is the saturation level, the slower is the convergence to zero.

To conclude, let us state some questions arising in this context:

%1. As mentioned in the introduction, even if the internal control (without any constraints) acts only on a part of the domain, the stability still holds. Is it true with a saturated control?
\startmodif 1. Can a saturated localized damping stabilize in $H^3(0,L)$ a generalized Korteweg-de Vries equation, as done in the unsaturated case in \cite{rosier2006global} and \cite{linares2007exponential} ? \stopmodif

2. \startmodif Is it possible to saturate other damping terms, for instance the one suggested in \cite{perla-vasconcellos-zuazua} and used in \cite{massarolo} which dissipates the $H^{-1}$-norm in the unsaturated case? \stopmodif 

3. Some boundary controls have been already designed in \cite{cerpa_coron_backstepping}, \cite{coron2014local}, \cite{tang2013stabKdV} or \cite{cerpa2009rapid}. By saturating these controllers, are the corresponding equations still stable? 
% Note that \cite{cerpa_coron_backstepping} and \cite{coron2014local} use the backstepping method with respectively a Volterra and a Fredholm transformations. Thus the method we would have to use in these cases should be very different to the one we used in the present paper. 

4. Another constraint than the saturation can be considered. For instance the backlash studied in \cite{tarbouriech2014stability} or the quantization \cite{ferrante2015quantization}. 

5. Can we apply the same method for other nonlinear partial differential equations, for instance the Kuramoto-Sivashinsky equation \cite{cerpa2010cKSl, pato2014controlKS} ? \\%An interesting model could be the one-dimensional Burgers equation given by
%\begin{equation}
%\left\{
%\begin{split}
%&y_t-\varepsilon y_{xx}+yy_x+f=0,\\
%&y(t,0)=y(t,1)=0\\
%&y(0,x)=y_0.
%\end{split}
%\right.
%\end{equation}
%where $\varepsilon$ is a positive value.

%4. An other research line would be the study of a partial differential equation with a dimension higher than one. In our case, an interesting model is the Kadomtsev-Petviashvili equation that is a generalization of the Korteweg-de Vries equation in two dimension.  

%Very few papers deal with stabilization of PDEs with bounded boundary controls (see \cite{lasiecka2002saturation}). For instance, is the following hyperbolic equation
%\begin{equation}
%\left\{
%\begin{split}
%&y_t+\Lambda y_x=0,\\
%&y(t,0)=Hy(t,1)+B\sat(Ky)(t,1),\\
%&y(0,x)=y_0(x).
%\end{split}
%\right.
%\end{equation}
%where $\Lambda$ is a positive definite matrix and $K$ is suitably computed, stable?

\textbf{Acknowledgements.} The authors would like to thank Lionel Rosier  for having attracted
our attention to the article \cite{rosier2006global} and for fruitful discussions.

\bibliographystyle{plain}
\bibliography{bibsm}
\end{document}